\declaretheorem[name=Theorem,refname={Theorem,Theorems}]{HCL}
\declaretheorem[name=Theorem,parent=section]{theorem}
\declaretheorem[name=Lemma,sibling=theorem]{lemma}
\declaretheorem[name=Corollary,sibling=theorem]{corollary}
\declaretheorem[name=Proposition,sibling=theorem]{prop}
\declaretheorem[name=Question,sibling=theorem]{question}
\declaretheorem[name=Remark,style=remark,numbered=no]{remark}
\newcommand{\bR}{\mathbf{R}}
\newcommand{\RR}{\mathbb{R}}
\newcommand{\cA}{\mathcal{A}}
\newcommand{\cC}{\mathcal{C}}
\newcommand{\cF}{\mathcal{F}}
\newcommand{\cG}{\mathcal{G}}
\newcommand{\cH}{\mathcal{H}}
\newcommand{\cI}{\mathcal{I}}
\newcommand{\cS}{\mathcal{S}}
\newcommand{\cX}{\mathcal{X}}
\newcommand{\tC}{\tilde{C}}
\renewcommand{\Pr}{\mathbb{P}}
\newcommand{\Ex}{\mathbb{E}}
\newcommand{\Ber}{\mathrm{Ber}}
\newcommand{\1}{\mathbbm{1}} % indicator
\newcommand{\DKL}{D_{KL}}
\newcommand{\Inf}{\mathrm{Inf}}
\newcommand{\br}[1]{\llbracket{#1}\rrbracket}
\newcommand{\<}{\langle}
\renewcommand{\>}{\rangle}
\renewcommand{\le}{\leqslant}
\renewcommand{\ge}{\geqslant}
\newcommand{\Q}{\mathbb{Q}}
\newcommand\R{\mathbb{R}}
\newcommand\Z{\mathbb{Z}}
\renewcommand{\leq}{\leqslant}
\renewcommand{\geq}{\geqslant}
\renewcommand{\le}{\leqslant}
\renewcommand{\ge}{\geqslant}
\renewcommand{\to}{\rightarrow}
\def\1{\mathbbm{1}}
\author{Marcelo Campos}
\address{Trinity College, University of Cambridge, Cambridge CB2 1TQ, United Kingdom}
\email{mc2482@cam.ac.uk}
\author{Wojciech Samotij}
\address{School of Mathematical Sciences, Tel Aviv University, Tel Aviv 6997801, Israel}
\email{samotij@tauex.tau.ac.il}
\title[Towards an optimal hypergraph container lemma]{Towards an optimal hypergraph container lemma}
\thanks{This research was supported by: the Israel Science Foundation grant 2110/22; the grant 2019679 from the United States--Israel Binational Science Foundation (BSF) and the United States National Science Foundation (NSF); and the ERC Consolidator Grant 101044123 (RandomHypGra).}
\begin{document}

\maketitle

\begin{abstract}
  The hypergraph container lemma is a powerful tool in probabilistic combinatorics that has found many applications since it was first proved a decade ago.  Roughly speaking, it asserts that the family of independent sets of every uniform hypergraph can be covered by a small number of almost-independent sets, called containers.  In this article, we formulate and prove two new versions of the lemma that display the following three attractive features.  First, they both admit short and simple proofs that have surprising connections to other well-studied topics in probabilistic combinatorics.  Second, they use alternative notions of almost-independence in order to describe the containers.  Third, they yield improved dependence of the number of containers on the uniformity of the hypergraph, hitting a natural barrier for second-moment-type approaches.
\end{abstract}

\section{Introduction}
\label{sec:introduction}

The method of hypergraph containers is a powerful and widely-applicable technique in probabilistic combinatorics.  The method enables one to control the independent sets of many `interesting' uniform hypergraphs by exploiting the fact that these sets exhibit a certain subtle clustering phenomenon.  The survey \cite{BalMorSam18} provides a gentle introduction to the method, illustrated with several example applications.

The heart of the method is a hypergraph container lemma (HCL for short) -- a statement that formalises and quantifies the notion of clustering we alluded to above.  A~generic HCL asserts that every uniform hypergraph $\cH$ admits a collection $\cC = \cC(\cH)$ of \emph{containers} for the family $\cI(\cH)$ of independent sets of $\cH$ with the following three properties:
\begin{enumerate}[label=(\roman*)]
\item
  each $I \in \cI(\cH)$ is contained in some $C \in \cC$;
\item
  \label{item:generic-HCL-few-containers}
  the family $\cC$ has `small' cardinality; and
\item
  \label{item:generic-HCL-tight-containers}
  each $C \in \cC$ is `almost independent'.
\end{enumerate}

Note that such a statement is vacuously true when $\cH$ is $1$-uniform, as then each $I \in \cI(\cH)$ is contained in the largest independent set $\{v \in V(\cH) : \{v\} \notin \cH\}$.
Although a HCL for $2$-uniform hypergraphs is already implicit in the works of Kleitman and Winston~\cite{KleWin82} from the early 1980s, an explicit statement was formulated and proved only a decade later by Sapozhenko~\cite{Sap01}.
General HCLs that are applicable to hypergraphs of an arbitrary uniformity were proved much later, independently and simultaneously, by Balogh, Morris, and Samotij~\cite{BalMorSam15} and by Saxton and Thomason~\cite{SaxTho15}.

The precise meaning of small in~\ref{item:generic-HCL-few-containers} above depends not only on the notion of almost-independence in~\ref{item:generic-HCL-tight-containers}, but also on the uniformity of the hypergraph.
The dependence on the uniformity in the original HCLs of \cite{BalMorSam15,SaxTho15} was rather unfavourable.
Roughly speaking, the guaranteed upper bound on $\log |\cC(\cH)|$ for an $r$-uniform hypergraph $\cH$ in \cite{BalMorSam15} and \cite{SaxTho15} was proportional to $e^{\Theta(r^2)}$ and $r!$, respectively.
To remedy this, Balogh and Samotij~\cite{BalSam20} proved a more `efficient' HCL that yielded an upper bound on $\log |\cC(\cH)|$ that was proportional to merely a polynomial in $r$.
Let us also mention that Morris, Samotij, and Saxton~\cite{MorSamSax24} proved an `asymmetric' HCL that was fine-tuned to enumeration of sparse graphs not containing an induced copy of a given subgraph, but has since found additional applications~\cite{Cam20,CamColMorMorSou22,CamCouSerWot23,LiuMatSza}.

While the proofs of the four HCLs mentioned in the previous paragraph are elementary, they all are rather lengthy and technical (there are much easier arguments that prove the existence of containers in the $2$-uniform case, see the survey~\cite{Sam15}).
This motivated several groups of authors to seek HCLs that admit shorter and simpler proofs.
Saxton and Thomason~\cite{SaxTho16simple} found an easy method for building containers for independent sets in simple hypergraphs.
In another work~\cite{SaxTho16online}, the same authors presented a streamlined proof of their original HCL~\cite{SaxTho15} that yields an additional `online' property.
A few years later, Bernshteyn, Delcourt, Towsner, and Tserunyan~\cite{BerDelTowTse19} formulated a HCL whose statement adapts notions from nonstandard analysis and supplied a~compact (spanning approximately four pages), non-algorithmic proof;
unfortunately, their HCL still suffers from unfavourable dependence on the uniformity.
Finally, Nenadov~\cite{Nen} recently proved a statement that may be interpreted as a probabilistic HCL; while it does not supply a collection of containers for independent sets, it is sufficiently powerful to replace HCL in most of its typical applications.

\subsection{Motivation}
\label{sec:motivation}

We embarked on this project with two independent goals in mind.  Our first aim was to further improve the dependence of (the logarithm of) the number of containers on the uniformity of the hypergraph.  The second aim was to find a simple(r) construction of containers for general uniform hypergraphs that admits a compact proof.  Whereas the first goal has been achieved only partially -- each HCL formulated and proved in this paper encounters the same `second-moment barrier' that results in $O(r^2)$-type dependence of $\log |\cC|$ on the uniformity $r$, we do suggest a pathway to improving it further.
% In fact, we present a conjecture that would imply a logarithmic dependence of $\log |\cC|$ on $r$ and that is closely connected to the Park--Pham theorem~\cite{ParPha24}.  As for the second goal, we let the reader be the judge.

\subsection{Our results}
\label{sec:our-results}

We formulate two new HCLs that are stronger than the efficient HCL of Balogh and the second author and, at the same time, admit short proofs.
The two lemmas use different notions of almost-independence that, unlike all previous works, are not expressed in terms of a `balanced supersaturation' condition;
however, both of them are at least as strong as the usual notion
(to certify this, we will present short derivations of the efficient HCL from each of our two lemmas).
Furthermore, one of our HCLs does not even require the input hypergraph to be uniform.

The statement of our first HCL, \Cref{HCL:cover} below, is inspired by the recent developments in the study of threshold phenomena in random sets~\cite{FraKahNarPar21,ParPha24}.
In order to state it, we need to introduce some notation.
First, given a hypergraph $\cG$, we write
\[
  \< \cG \> \coloneqq \bigcup_{E \in \cG} \{ F \subseteq V(\cG) : F \supseteq E \}
\]
for the up-set generated by $\cG$.  We say that $\cG$ \emph{covers} a hypergraph $\cH$ if $\cH \subseteq \< \cG \>$;  in other words, $\cG$ covers $\cH$ is every edge of $\cH$ contains some edge of $\cG$.
Second, for each $p \in [0,1]$, define the \emph{$p$-weight} of a hypergraph $\cG$ to be
\[
  w_p(\cG) \coloneqq \sum_{E\in \cG} p^{|E|},
\]
which is just the expected number of edges of $\cG$ induced by the $p$-random subset\footnote{The $p$-random subset of a finite set $X$ is the random set formed by independently retaining each element of $X$ with probability $p$.} of $V(\cG)$.

\begin{HCL}
  \label{HCL:cover}
  Let $\cH$ be an $r$-uniform hypergraph with a finite vertex set $V$.
  For every $p \in (0,1/(8r^2)]$, there exists a family $\cS \subseteq 2^{V}$ and functions
  \[
    g \colon \cI(\cH)\to \cS
    \qquad \text{and} \qquad
    f \colon \cS\to 2^{V}
  \]
  such that:
  \begin{enumerate}[label=(\alph*)]
  \item
    \label{item:contained}
    For each $I\in \cI(\cH)$, we have $g(I) \subseteq I \subseteq f(g(I))$.
  \item
    \label{item:small-fingerprint}
    Each $S \in \cS$ has at most $8r^2 p |V|$ elements.
  \item
    \label{item:small-cover}
    For every $S \in \cS$, letting $C \coloneqq f(S)$, there exists a hypergraph $\cG$ on $C$ with
    \[
      w_p(\cG) \le p|C|
    \]
    that covers $\cH[C]$ and satisfies $|E| \geq 2$ for all $E \in \cG$.
  \end{enumerate}
\end{HCL}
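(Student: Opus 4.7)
The plan is to construct $g$ and $f$ via a deterministic iterative algorithm that, for each $I \in \cI(\cH)$, builds a pair $(S_t,C_t)$ with $S_t\subseteq I\subseteq C_t$, starting from $S_0=\emptyset$ and $C_0=V$. The crucial design constraint is that the next vertex to be processed at each step must depend only on the current container $C_t$ (and on $\cH$), not on $I$. With this invariance in place, $f(S)$ can be defined by simulating the algorithm and treating vertices as ``in $I$'' iff they lie in $S$: the simulation reproduces the same trajectory and hence the same final container, which is what makes $f$ a function of $S$ alone.

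At step $t$, if $\cH[C_t]$ already admits a cover $\cG$ with $w_p(\cG)\leq p|C_t|$ and all edges of size $\geq 2$, the algorithm halts. Otherwise the key structural input I plan to prove is a dichotomy: whenever no such cover exists, there must be a canonical ``heavy'' subset $F\subseteq C_t$ with $|F|\in[2,r]$ whose degree $d(F,C_t)=|\{e\in\cH[C_t]:F\subseteq e\}|$ exceeds a size-dependent threshold of the order $1/p^{r-|F|}$. This is the ``averaging/LP-duality'' step: if one cannot cover $\cH[C_t]$ cheaply by any mixture of $\ell$-subsets for $\ell\in\{2,\dots,r\}$, then some $\ell$-subset must concentrate a disproportionate share of the edge mass. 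The algorithm then selects the lex-first such $F$ and the lex-first vertex $v\in F$; if $v\in I$ it adds $v$ to $S_{t}$, otherwise it deletes $v$ from $C_t$. When the process halts, the cover required in condition~(c) is exhibited directly from the dichotomy by assigning each remaining edge $e\in\cH[C]$ to a subset $F\subseteq e$ of the largest size still exceeding (or, for $|F|=r$, equal to) its threshold, and bounding the total $p$-weight by summing $p^\ell$ over the maximum number of $\ell$-subsets consistent with the threshold.

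The main obstacle, as I expect, is the termination bound $|S|\leq 8r^2p|V|$. My plan is to stratify iterations by the size $\ell=|F|\in\{2,\dots,r\}$ of the heavy subset being processed and, for each level $\ell$, to exhibit a potential function on $\cH[C]$ (built from the weighted count of subsets with $\ell$-degree above threshold) whose decrease in each iteration is large enough to cap the number of additions to $S$ at level $\ell$ by $O(rp|V|)$. Summing over the $r-1$ levels gives the claimed $O(r^2p|V|)$ bound; the explicit constant $8$ and the hypothesis $p\leq 1/(8r^2)$ enter through the tuning of the thresholds, which must be simultaneously large enough to guarantee per-iteration progress and small enough to ensure that the terminal cover satisfies $w_p(\cG)\leq p|C|$. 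The structural dichotomy above is the single ingredient I expect to require the most care, since it must be proven uniformly in $|F|$ and in a form canonical enough to make the simulation-based definition of $f$ go through.
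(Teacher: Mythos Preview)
Your algorithm has a genuine gap at the step where $v\in I$: you add $v$ to $S_t$ but leave $C_t$ unchanged, so $\cH[C_{t+1}]=\cH[C_t]$, the same heavy set $F$ is selected, and the same lex-first $v\in F$ is picked again.  Even if you patch this by choosing $v\in F\setminus S_t$, your potential is ``built from $\cH[C]$'', which does not change in a round with $v\in I$; hence it cannot decrease in those rounds and cannot bound $|S|$.  The difficulty you flagged as easiest (termination/size of $S$) is in fact where your scheme breaks, and the difficulty you flagged as hardest (the heavy-set dichotomy) is, by averaging, the routine part.

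The paper's algorithm differs from yours in two ways that are essential, not cosmetic.  First, it poses \emph{set} queries ``Is $L\subseteq I$?'' rather than vertex queries.  Second, and crucially, it maintains an evolving hypergraph $\cH_i$ rather than only a shrinking container: when $L_i\subseteq I$, it replaces the edges of $\cH_i^{s_i}$ through $L_i$ by their link $\partial_{L_i}\cH_i^{s_i}$, producing strictly smaller edges; when $L_i\nsubseteq I$, it inserts $L_i$ itself as a new (small) edge.  The container is then $C_i=\{v:\{v\}\notin\cH_i\}$, and the final cover is simply $\cG=\cH_J^{>1}$ --- it is manufactured by the algorithm, not assembled afterwards.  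The potential is $w_p(\cH_i^{<r})$, which (because each $\cF_i$ consists of edges of size $<r$ disjoint from $\langle\cH_i\rangle$) \emph{increases} by at least $1/(8r)$ whenever $L_i\subseteq I$, and is globally bounded by $w_p(\cH_J)\le p|V|$; this is what yields $|S|\le 8r^2p|V|$.  The delicate point is not the existence of a heavy $L$, but the invariant $w_p(\partial_L\cH_i^s)\le 1/(2r)$ for all $|L|<s<r$, which is what guarantees the per-step potential gain; maintaining it forces the specific ``smallest $s$, then inclusion-maximal $L$'' selection rule.  Your container-only framework has no analogue of this mechanism.
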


Let us point out that the above theorem does not assume anything about the input hypergraph, except that it is uniform.
The condition that all edges of the cover have size at least two is necessary to prevent the conclusion from being trivial; indeed, every hypergraph with vertex set $C$ admits a cover (by singletons) of $p$-weight $p|C|$.
An analogous comment applies to the assumption that $p \le 1/(8r^2)$;  if $p \ge 1/(8r^2)$, then one may simply set $f(I) = g(I) = I$ for all $I \in \cI(\cH)$ and let $\cG$ be the empty hypergraph.

Our second HCL, \Cref{HCL:hardcore}, has a probabilistic flavour; this stems form the fact that its proof views independent sets as samples from a hard-core model.
Perhaps surprisingly, it does not assume anything about the hypergraph, not even uniformity.
Given an arbitrary set $X$ and a real $p \in [0,1]$, we write $X_p$ to denote the $p$-random subset of $X$.

\begin{HCL}
  \label{HCL:hardcore}
  Let $\cH$ be a hypergraph with a finite vertex set $V$.
  For all reals $\delta$ and $p$ satisfying $0 < p \le \delta < 1$, there exists a family $\cS \subseteq 2^{V}$ and functions
  \[
    g \colon \cI(\cH)\to \cS
    \qquad \text{and} \qquad
    f \colon \cS\to 2^{V}
  \]
  such that:
  \begin{enumerate}[label=(\alph*)]
  \item
    \label{item:contained-hardcore}
    For each $I\in \cI(\cH)$, we have $g(I) \subseteq I \subseteq f(g(I))$.
  \item
    \label{item:small-fingerprint-hardcore}
    Each $S \in \cS$ has at most $p|V|/\delta$ elements.
  \item
    \label{item:subexponential-probability}
    For every $S \in \cS$, letting $C \coloneqq f(S)$,
    \[
      \Pr\bigl(S \cup C_p \in \cI(\cH)\bigr) \ge (1-p)^{\delta|C \setminus S|}.
    \]
  \end{enumerate}  
\end{HCL}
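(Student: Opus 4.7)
The plan is to construct $g$ and $f$ via a greedy iterative algorithm driven by the hard-core marginals of $\cH$. Fix a canonical linear order on $V$. Given $I \in \cI(\cH)$, initialise $S = \emptyset$ and $C = V$; while some $v \in C \setminus S$ satisfies
\[
  \alpha_v(C, S) \coloneqq \Pr\bigl(v \in (C \setminus S)_p \bigm| S \cup (C \setminus S)_p \in \cI(\cH)\bigr) < p(1-\delta),
\]
take the smallest such $v$ and add it to $S$ if $v \in I$, or delete it from $C$ if $v \notin I$. Halt when no such $v$ remains and set $g(I) = S$ and $f(g(I)) = C$. Since $|C \setminus S|$ strictly decreases at every step, the loop terminates in at most $|V|$ iterations. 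That $f$ is well defined as a function of $S$ alone is routine: the processability test and the choice of the next $v$ depend only on $(C, S)$, and given $S_{\mathrm{final}}$ the disposition of each processed vertex can be recovered by checking membership, yielding the same trajectory and hence the same $C$. Property (a) is immediate.

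For property (b) I monitor $\log Z_C^S$, where $Z_C^S \coloneqq \Pr(S \cup (C \setminus S)_p \in \cI(\cH))$. The identities $Z_C^{S \cup \{v\}} = (\alpha_v/p) Z_C^S$ and $Z_{C \setminus \{v\}}^S = ((1-\alpha_v)/(1-p)) Z_C^S$ show that each addition to $S$ shifts $\log Z_C^S$ by less than $\log(1-\delta)$ (negative) and each removal from $C$ shifts it by at most $\log(1/(1-p))$ (nonnegative). Combining the telescoping total change with $\log Z_0 \leq 0$ and the trivial bound $\log Z_{\mathrm{final}} \geq |C_{\mathrm{final}} \setminus S_{\mathrm{final}}| \log(1-p)$ (retain only the $J = S$ summand in $Z_C^S$), and invoking $|\log(1-\delta)|/\delta \geq |\log(1-p)|/p$ (by convexity of $x \mapsto -\log(1-x)$), yields $|g(I)| \leq p|V|/\delta$ after a short algebraic manipulation.

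The main obstacle is property (c). At termination, $\alpha_v(C, S) \geq p(1-\delta)$ for every $v \in C \setminus S$, and this marginal bound must be converted into $Z_C^S \geq (1-p)^{\delta|C \setminus S|}$. My plan is to invoke the FKG inequality --- $\{S \cup (C \setminus S)_p \in \cI(\cH)\}$ is the intersection of the decreasing events $\{e \not\subseteq S \cup (C \setminus S)_p\}$ over $e \in \cH$ --- to obtain $Z_C^S \geq \prod_{e \subseteq C,\ e \not\subseteq S}(1 - p^{|e \setminus S|})$, and then to deduce from the termination condition the edge-weight bound $\sum_{e} p^{|e \setminus S|} \leq \tfrac{1}{2}\delta p |C \setminus S|$. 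To derive the latter I write $\alpha_v/p = (1-q_v)/(1-p q_v)$, where $q_v$ is the hard-core probability that some edge through $v$ has its complement already sampled, extract $q_v \leq \delta/(1 - p + p\delta)$ from the marginal bound, and then lower-bound $q_v$ via a Janson/Bonferroni estimate in terms of $\sum_{e \ni v} p^{|e \setminus S|-1}$, the FKG positive association within the hard-core measure keeping the Bonferroni correction small. Summing over $v$ and using that $|e \setminus S| \geq 2$ for every remaining edge (any $v$ lying in a singleton violated edge forces $\alpha_v = 0$ and is processed out immediately) delivers the desired inequality. The most delicate step will be controlling the Bonferroni correction precisely enough to get the exponent $\delta$ exactly; this may require a slight refinement of the algorithmic threshold (for example $\alpha_v < p(1-p)^\delta$).
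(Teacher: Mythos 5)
Your algorithm coincides with the paper's (the paper phrases it via an evolving hypergraph $\cH_i$, you via the pair $(C,S)$, but the threshold test, the update rule, and the reconstruction of $C$ from $S$ alone are the same), and your arguments for (a) and for the fingerprint bound (b) are correct: your telescoping of $\log Z_C^S$ is a minor variant of the paper's comparison $(1-p)^{|V|} \le P_J \le (1-\delta)^{|S|}$ followed by the monotonicity of $x \mapsto (1-x)^{1/x}$.

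The plan for (c), however, has a genuine gap, and it is not a matter of tuning the Bonferroni correction or the threshold. The intermediate claim you aim for --- that the termination condition forces $\sum_{e \subseteq C,\, e \not\subseteq S} p^{|e \setminus S|} \le \tfrac{1}{2}\delta p\,|C\setminus S|$ --- is false. Take $\cH$ to be all $r$-subsets of $V$ containing a fixed pair $\{a,b\}$, with $r \ge 4$ and $|V|$ large. Every conditional marginal is at least $p/(1+p) \ge (1-\delta)p$ (using $\delta \ge p$), so your algorithm halts immediately with $S = \emptyset$ and $C = V$, yet $w_p(\cH) \approx \binom{|V|-2}{r-2}p^r$ exceeds $\delta p |V|$ by an arbitrarily large factor. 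The singleton marginal condition controls at best the weight of a well-chosen \emph{cover} of $\cH[C]$ (in the example, the single edge $\{a,b\}$) --- that is the content of \Cref{prop:prob-vs-covers}\ref{item:prob-vs-covers-2}, whose proof already needs a maximal-subgraph construction and Janson's inequality --- whereas your FKG product runs over all edges of $\cH[C]$ and would be far too small. The proposed route to the weight bound also fails directionally: conditioning on $S \cup W_p \in \cI(\cH)$ is conditioning on a \emph{decreasing} event, so by Harris the conditional probability of the increasing event $e \setminus S \setminus \{v\} \subseteq W_p$ is at most, not at least, $p^{|e\setminus S|-1}$; in the example the first-order Bonferroni term for $v = a$ is enormous while $q_a \le p$, so no lower bound of the form $q_v \gtrsim \sum_{e \ni v} p^{|e\setminus S|-1}$ is available.

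The paper sidesteps edge weights entirely. Property (c) is deduced from \Cref{prop:key}: for any decreasing family $\cI \subseteq 2^{C'}$ one has $\log \Pr(C'_p \in \cI) \ge \bigl(|C'| - \Ex[|C'_p| \mid C'_p \in \cI]/p\bigr)\log(1-p)$. The termination condition gives $\Ex[|C'_p| \mid \cdot\,] \ge (1-\delta)p|C'|$ for $C' = C \setminus S$, and the bound $(1-p)^{\delta|C\setminus S|}$ follows at once. This inequality (proved in the appendix via the entropy chain rule, or Kruskal--Katona, or edge-isoperimetry in the cube) is the ingredient your argument is missing: it converts the marginal condition directly into the probability bound without ever comparing $\cH[C]$ to a low-weight cover.
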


The proofs of both \Cref{HCL:cover,HCL:hardcore} follow the same general strategy that was also used to prove most HCLs to date; it can be traced back to the work of Kleitman and Winston~\cite{KleWin82}.
Namely, we define an algorithm that, given an independent set $I$ as input, builds sets $S$ and $C$ satisfying $S \subseteq I \subseteq C$ and, in the proof of \Cref{HCL:cover}, also a hypergraph $\cG$ that covers $\cH[C]$.
Crucially, the set $C$ depends on $I$ only through $S$ in the following sense:
If for some two inputs $I, I'$, the algorithm outputs the same set $S$, it also returns the same set $C$.
This property allows one to define the functions $g$ and $f$ via the algorithm.
The main novelty in the algorithm that underlies the proof of \Cref{HCL:cover} is that it considers queries of the form `Is $L \subseteq I$?' for sets $L$ that are not necessarily singletons.
The algorithm used in the proof of \Cref{HCL:hardcore} considers only queries of the form `Does $v \in I$?', but it chooses the vertex $v$ based on its occupancy probability in a certain hard-core model.

\subsection{Comparing \Cref{HCL:cover,HCL:hardcore}}
\label{sec:comparison}

Even though the descriptions of the container sets $f(S)$ in \Cref{HCL:cover,HCL:hardcore} look rather different, they are in fact essentially equivalent.  This is a consequence of the following proposition, which quantifies the relation between the smallest $p$-weight of a cover for a uniform hypergraph~$\cH$ and the probability that the $p$-random subset of vertices of $\cH$ is independent.

\begin{prop}
  \label{prop:prob-vs-covers}
  Suppose that $\cH$ is a hypergraph on a finite vertex set $V$.
  \begin{enumerate}[label=(\roman*)]
  \item
    \label{item:prob-vs-covers-1}
    For every hypergraph $\cG \subseteq 2^V \setminus \{\emptyset\}$ that covers $\cH$ and all $p \in (0,1/2)$,
    \[
      \Pr(V_p \in \cI(\cH)) \ge \exp\bigl(-2w_p(\cG)\bigr).
    \]
  \item
    \label{item:prob-vs-covers-2}
    If $\cH$ is $r$-uniform, then, for every $p \in (0, 1/(4r))$, there exists a hypergraph $\cG \subseteq 2^V \setminus \{\emptyset\}$ that covers $\cH$ and satisfies
    \[
      \Pr(V_p \in \cI(\cH)) \le \exp \bigl(-w_{p/(4r^2)}(\cG)/8\bigr).
    \]
  \end{enumerate}
\end{prop}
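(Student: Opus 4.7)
Since $\cG$ covers $\cH$, the event $V_p \in \cI(\cH)$ is implied by the event that $V_p$ contains no edge of $\cG$, which is the intersection over $E \in \cG$ of the decreasing events $\{E \not\subseteq V_p\}$.  The FKG inequality therefore gives
\[
  \Pr(V_p \in \cI(\cH)) \ge \prod_{E \in \cG}\bigl(1 - p^{|E|}\bigr).
\]
Since $\cG \subseteq 2^V \setminus \{\emptyset\}$ forces $|E| \ge 1$, and $p < 1/2$ forces $p^{|E|} < 1/2$, the elementary inequality $1 - x \ge e^{-2x}$ (valid on $[0,1/2]$) applied term-by-term yields $\Pr(V_p \in \cI(\cH)) \ge \exp(-2 w_p(\cG))$, as claimed.

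\textbf{Part (ii).}
The plan is to construct $\cG$ via an iterative peeling procedure guided by Janson's inequality.  Write $q := p/(4r^2)$, $P := \Pr(V_p \in \cI(\cH))$, $\mu := w_p(\cH)$ and $\Delta := \sum_{E \ne F \in \cH,\, E \cap F \ne \emptyset} p^{|E \cup F|}$.  Janson's inequality gives $-\log P \ge \mu^2/(2(\mu + \Delta))$.  I distinguish two cases.  If $\mu \ge \Delta$, then $-\log P \ge \mu/4$, and setting $\cG := \cH$ yields $w_q(\cG) = \mu/(4r^2)^r$, so that $w_q(\cG)/8 \le \mu/4 \le -\log P$.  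If instead $\Delta > \mu$, then $-\log P \ge \mu^2/(4\Delta)$; pigeonholing $\Delta$ first over intersection sizes $k \in \{1,\dots,r\}$ and then over $k$-subsets of $V$ produces an $S \subseteq V$ with $1 \le |S| \le r$ whose codegree $d(S) := |\{E \in \cH : S \subseteq E\}|$ is so large that adding $S$ to $\cG$ (at a $q$-weight cost of $q^{|S|}$) is amply paid for by removing the $d(S)$ edges containing $S$.  One then recurses on the residual hypergraph $\cH \setminus \{E : S \subseteq E\}$ until the first case applies.

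\textbf{Main obstacle.}
The delicate point is the accounting across iterations: the total $q$-weight of $\cG$ must be bounded by $8 \cdot (-\log P)$ for the \emph{original} $P$, not the residual one.  The plan is to couple each weight increment $q^{|S|}$ with a corresponding drop in the Janson lower bound when passing from $\cH$ to the residual, using monotonicity of $-\log \Pr(V_p \in \cI(\cdot))$ under edge removal.  The factor $(4r^2)^{-|S|}$ inside $q^{|S|}$ is calibrated to absorb both the $\binom{r}{k}$ overcounting from the pigeonhole step and the ratio between the $p$-weight of the removed edges and the $q$-weight of the added set $S$; executing this telescoping cleanly, with the precise constant $8$, is the technical heart of the argument.
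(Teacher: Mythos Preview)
Part~(i) is correct and essentially identical to the paper's argument (which phrases it as Harris's inequality rather than FKG, but the content is the same).

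For Part~(ii), your approach is genuinely different from the paper's, and the obstacle you flag is real and, as far as I can see, unresolved by your sketch. The paper does \emph{not} iterate. It passes in a single step to a maximal subhypergraph $\cH' \subseteq \cH$ satisfying codegree bounds $\Delta_\ell(\cH') \le \lambda_\ell\binom{r}{\ell}^{-1}p^{\ell-r}$ for every $\ell$ (with $\lambda_\ell = 4^{-\min\{\ell,r-\ell\}}$). These bounds force $\Delta^* \le 2\mu$ for $\cH'$, so a single application of Janson's inequality gives $P \le e^{-\mu/4}$. The cover $\cG$ is then the antichain of minimal sets $T$ whose degree in $\cH'$ is saturated; maximality of $\cH'$ makes $\cG$ cover all of $\cH$, and the LYMB inequality converts the saturation condition into the bound $w_{p/(4r^2)}(\cG) \le 2\mu$.

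In your iterative scheme, neither of the two natural telescopes works. Telescoping against $-\log P$ fails because removing the edges through $S$ can leave $P$ essentially unchanged: if the remaining edges already witness $V_p \notin \cI(\cH)$ on almost all relevant configurations, the decrement $\log(P_{i+1}/P_i)$ can be far smaller than $q^{|S|}$. Telescoping against $\mu$ does succeed step-by-step --- your pigeonhole gives an $S$ with $d(S)p^r \gtrsim p^{|S|}/\bigl(r\binom{r}{|S|}\bigr) \ge q^{|S|}$, so $\sum_i q^{|S_i|} \le \mu_0 - \mu_{\mathrm{final}}$ --- but this only bounds the total cost by $\mu_0$, which can vastly exceed $-\log P_0$. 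A single sunflower with core of size $r-1$ and $n$ petals has $\mu_0 = np^r$ but $-\log P_0 \asymp p^{r-1}$; your algorithm handles this example correctly in one step, but the \emph{summing} argument you propose does not see why, and in a more complicated instance there is no evident invariant linking the accumulated $q$-weight to $-\log P_0$ rather than to $\mu_0$.

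The paper's one-shot construction avoids the difficulty entirely: by controlling all codegrees of $\cH'$ simultaneously, it forces $\mu$ and $-\log P$ to be comparable for $\cH'$, and LYMB transfers this directly to $w_{p/(4r^2)}(\cG)$, with no recursion to account for.
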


We postpone the proof of \Cref{prop:prob-vs-covers} to \Cref{sec:preliminaries} and now only elaborate on the qualitative equivalence between the notions of almost-independence used in \Cref{HCL:cover,HCL:hardcore}.
First, let $\cG$ be the hypergraph from assertion~\ref{item:small-cover} of \Cref{HCL:cover}.
The assumption that $|E| \ge 2$ for all $E \in \cG$ and the fact $w_p(\cG)\leq p|C|$ guarantee that, for all $\delta \in (0,1)$ and all $p' \le \delta p$,
\[
w_{p'}(\cG) \le (p'/p)^2 \cdot w_p(\cG) \le (p'/p) \cdot p'|C|\le \delta p'|C|.
\] 
Since $\cG$ covers $\cH[C]$, it follows from part~\ref{item:prob-vs-covers-1} of \Cref{prop:prob-vs-covers} that $\Pr\bigl(V_{p'} \in \cI(\cH[C])\bigr) \ge \exp\bigl(-2\delta p'|C|\bigr)$.

Conversely, assume that $p \le 1/(4r)$ and let $S$ and $C$ be the sets from assertion~\ref{item:subexponential-probability} of \Cref{HCL:hardcore}.
Since $\Pr\bigl(C_p \in \cI(\cH[C])\bigr) \ge \Pr\bigl(S \cup C_p \in \cI(\cH)\bigr)$, it follows from part~\ref{item:prob-vs-covers-2} of \Cref{prop:prob-vs-covers} that $\cH[C]$ admits a cover~$\cG$ that satisfies 
\[
w_{p/(4r^2)}(\cG) \le -8\log \Pr\bigl(S \cup C_p \in \cI(\cH)\bigr)\le -8\log (1-p)^{\delta |C|} \le 10 \delta p|C|\, .
\]

\subsection{Relation to earlier HCLs}
\label{sec:relation-earlier-HCLs}

An attractive feature of \Cref{HCL:cover,HCL:hardcore} is that each of them implies (a slight strengthening of) the `efficient' HCL of Balogh and Samotij~\cite[Theorem~1.1]{BalSam20}, stated here as \Cref{HCL:efficient} below, which in turn significantly improves the original HCLs proved in~\cite{BalMorSam15,SaxTho15}.
Given a hypergraph $\cH$ with vertex set $V$ and a set $L \subseteq V$, we define
\[
  \deg_{\cH}L \coloneqq |\{E \in \cH : L \subseteq E\}|.
\]
Further, for an integer $\ell \ge 1$, we let
\[
  \Delta_\ell(\cH) \coloneqq \max\{\deg_{\cH}L : L \subseteq V, |L| = \ell\}.
\]
The following statement can be easily derived from both \Cref{HCL:cover} as well as \Cref{HCL:hardcore} and Janson's inequality.  We present the two derivations in \Cref{sec:deducing-normal-containers}.

\begin{HCL}[\cite{BalSam20}]
  \label{HCL:efficient}
  Let $\cH$ be an $r$-uniform hypergraph with a finite vertex set $V$.
  Suppose that $\tau \in (0,1)$ and $K \ge r$ are such that, for every $\ell \in \br{r}$,
  \begin{equation}
    \label{HCL:efficient:Delta:bounds}
    \Delta_\ell(\cH) \leq K \cdot \left( \frac{\tau}{32 K r^2} \right)^{\ell -1} \cdot \frac{e(\cH)}{|V|}.
  \end{equation}
  There exists a family $\cS \subseteq 2^V$ and functions
  \[
    g \colon \cI(\cH)\to \cS
    \qquad \text{and} \qquad
    f \colon \cS\to 2^{V}
  \]
  such that:
  \begin{enumerate}[label=(\alph*)]
  \item
    \label{item:contained-efficient}
    For each $I\in \cI(\cH)$, we have $g(I) \subseteq I \subseteq f(g(I))$.
  \item
    \label{item:small-fingerprint-efficient}
    Each $S \in \cS$ has at most $\tau|V|$ elements.
  \item
    \label{item:small-cover-efficient}
    For every $S \in \cS$, we have $|f(S)| \le \bigl(1-1/(2K)\bigr)|V|$.
  \end{enumerate}
\end{HCL}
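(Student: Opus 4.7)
The cleanest route is to deduce \Cref{HCL:efficient} directly from \Cref{HCL:cover} by setting
\[
p := \frac{\tau}{8r^2} \in (0, 1/(8r^2)].
\]
With this choice, the collection $\cS$ and functions $g, f$ produced by \Cref{HCL:cover} already satisfy \ref{item:contained-efficient}, and the fingerprint bound from \ref{item:small-fingerprint} becomes $|S| \le 8r^2 p|V| = \tau|V|$, which is exactly \ref{item:small-fingerprint-efficient}. Hence the whole content of the deduction lies in extracting \ref{item:small-cover-efficient} from the hypergraph cover $\cG$ produced by \ref{item:small-cover}.

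My plan is a two-sided estimate on $e(\cH[C])$, where $C := f(S)$. Assume for contradiction that $|C| > (1-1/(2K))|V|$, i.e.\ $|V\setminus C| < |V|/(2K)$. The hypothesis \eqref{HCL:efficient:Delta:bounds} at $\ell = 1$ reads $\Delta_1(\cH) \le Ke(\cH)/|V|$; since every edge of $\cH$ missing from $\cH[C]$ must hit $V\setminus C$,
\[
e(\cH) - e(\cH[C]) \le |V\setminus C|\cdot \Delta_1(\cH) < \frac{|V|}{2K}\cdot \frac{Ke(\cH)}{|V|} = \frac{e(\cH)}{2},
\]
giving $e(\cH[C]) > e(\cH)/2$. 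Conversely, $\cG$ covers $\cH[C]$ with every $F \in \cG$ of size $\ge 2$, so
\[
e(\cH[C]) \le \sum_{F \in \cG}\deg_\cH(F) \le \sum_{F \in \cG}\Delta_{|F|}(\cH).
\]

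The key step is to translate the right-hand side into the $p$-weight of $\cG$. Since $\tau/(32Kr^2) = p/(4K)$ with our choice of $p$, the hypothesis \eqref{HCL:efficient:Delta:bounds} at every $\ell \ge 2$ gives
\[
\Delta_\ell(\cH) \le K\left(\frac{p}{4K}\right)^{\ell-1}\cdot\frac{e(\cH)}{|V|} = \frac{p^{\ell-1}}{4^{\ell-1}K^{\ell-2}}\cdot\frac{e(\cH)}{|V|} \le \frac{p^{\ell-1}}{4}\cdot\frac{e(\cH)}{|V|},
\]
where I used $4^{\ell-1}K^{\ell-2}\ge 4$ for $\ell \ge 2$ and $K \ge 1$. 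Summing over $F \in \cG$ and invoking $w_p(\cG) \le p|C|$,
\[
e(\cH[C]) \le \frac{e(\cH)}{4p|V|}\cdot w_p(\cG) \le \frac{e(\cH)|C|}{4|V|} \le \frac{e(\cH)}{4},
\]
contradicting $e(\cH[C]) > e(\cH)/2$ as long as $e(\cH) > 0$. The degenerate case $e(\cH) = 0$ is trivially handled since \eqref{HCL:efficient:Delta:bounds} is vacuous.

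The anticipated obstacle is purely bookkeeping: the constants in \eqref{HCL:efficient:Delta:bounds} and in \Cref{HCL:cover} are tuned so that the two halves of the sandwich just meet at $|C| = (1-1/(2K))|V|$, and any slack would force strengthening one of them. A parallel derivation should also be available through \Cref{HCL:hardcore}: one chooses $p$ and $\delta$ so that $p/\delta = \tau$, then applies Janson's inequality to $\cH[C]$ to turn the lower bound on $\Pr(S \cup C_p \in \cI(\cH))$ from \ref{item:subexponential-probability} into an upper bound involving $\Delta_\ell(\cH)$, with \eqref{HCL:efficient:Delta:bounds} again forcing $|C|$ below the target threshold.
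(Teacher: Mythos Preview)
Your derivation is correct and follows essentially the same route as the paper's: apply \Cref{HCL:cover} with $p=\tau/(8r^2)$, use the cover $\cG$ together with the degree bounds~\eqref{HCL:efficient:Delta:bounds} for $\ell\ge 2$ to force $e(\cH[C])<e(\cH)/2$, and then use the $\ell=1$ bound to convert this into $|C|\le(1-1/(2K))|V|$; the only cosmetic difference is that you phrase the last step as a contradiction while the paper argues directly. Your closing remark about the alternative derivation via \Cref{HCL:hardcore} and Janson's inequality also matches the paper's second proof.
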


In most applications of the hypergraph container method, one recursively applies a basic HCL, such as \Cref{HCL:efficient}, to construct a family of containers that are almost independent in the sense that one can no longer prove a balanced supersaturation result in any of the containers.
In order to avoid explicitly performing such a recursive procedure, one may instead invoke one of the several `packaged' HCLs that exist in the literature (see, e.g., \cite[Theorem~2.2]{BalMorSam15}, \cite[Theorem~1.6]{BalSam20}, or \cite[Corollary~3.6]{SaxTho15}).
Both \Cref{HCL:cover,HCL:hardcore} allow one to deduce such a packaged HCL directly, without the need for recursion.
For example, the following statement is a straightforward consequence of \Cref{HCL:cover}.

\begin{HCL}
  \label{HCL:packaged}
  Let $\cH$ be an $r$-uniform hypergraph with a finite vertex set $V$.
  For every $p \in (0, 1/(8r^2)]$, there exists a family $\cS \subseteq 2^V$ and functions
  \[
    g \colon \cI(\cH)\to \cS
    \qquad \text{and} \qquad
    f \colon \cS\to 2^{V}
  \]
  such that:
  \begin{enumerate}[label=(\alph*)]
  \item
    \label{item:contained-packaged}
    For each $I\in \cI(\cH)$, we have $g(I) \subseteq I \subseteq f(g(I))$.
  \item
    \label{item:small-fingerprint-packaged}
    Each $S \in \cS$ has at most $8r^2p|V|$ elements.
  \item
    \label{item:non-supersaturated-packaged}
    For every $S \in \cS$, there does not exist a hypergraph $\cH_S \subseteq \cH[f(S)]$ that satisfies
    \begin{equation}
      \label{eq:p-supersaturated-def}
      \Delta_\ell(\cH_S) < \frac{p^{\ell-1}e(\cH_S)}{|f(S)|}
      \qquad \text{for all $\ell \in \{2, \dotsc, r\}$.}
    \end{equation}    
  \end{enumerate}
\end{HCL}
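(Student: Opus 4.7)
The plan is to deduce \Cref{HCL:packaged} directly from \Cref{HCL:cover} by showing that the cover of each container guaranteed by \Cref{HCL:cover} is already incompatible with the existence of a subhypergraph satisfying the $p$-supersaturation inequalities~\eqref{eq:p-supersaturated-def}. No further construction is needed beyond recycling the family $\cS$ and the functions $g, f$ supplied by \Cref{HCL:cover}.

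First I would apply \Cref{HCL:cover} to $\cH$ with the given $p$ and take $\cS, g, f$ verbatim. Conditions~\ref{item:contained-packaged} and~\ref{item:small-fingerprint-packaged} are identical to~\ref{item:contained} and~\ref{item:small-fingerprint} of \Cref{HCL:cover} and so require no argument; the entire content of the proof lies in verifying~\ref{item:non-supersaturated-packaged}.

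To that end, fix $S \in \cS$, set $C \coloneqq f(S)$, and let $\cG$ be the hypergraph on $C$ provided by~\ref{item:small-cover}, so $\cG$ covers $\cH[C]$, every edge of $\cG$ has size at least $2$, and $w_p(\cG) \le p|C|$. Suppose for contradiction that some $\cH_S \subseteq \cH[C]$ satisfies~\eqref{eq:p-supersaturated-def} for every $\ell \in \{2, \dotsc, r\}$. Since $\Delta_\ell(\cH_S) \ge 0$, the strictness of~\eqref{eq:p-supersaturated-def} forces $e(\cH_S) > 0$, and then $\cG$ must be nonempty as it covers $\cH_S$. Double-counting the pairs $(F, E) \in \cG \times \cH_S$ with $F \subseteq E$ and then applying~\eqref{eq:p-supersaturated-def} to each $F \in \cG$ (with $\ell = |F| \in \{2, \dotsc, r\}$) would yield
\[
e(\cH_S) \le \sum_{F \in \cG} \deg_{\cH_S}(F) < \sum_{F \in \cG} \frac{p^{|F|-1}\, e(\cH_S)}{|C|} = \frac{e(\cH_S) \cdot w_p(\cG)}{p|C|} \le e(\cH_S),
\]
a contradiction.

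The only real subtlety is ensuring that the middle inequality is strict, which reduces to checking that $\cG$ is nonempty; this is exactly what the strictness of~\eqref{eq:p-supersaturated-def} buys us, via the implication $\cH_S \neq \emptyset$ noted above. Consequently, \Cref{HCL:packaged} requires essentially no work beyond the conclusion of \Cref{HCL:cover}, which is the sense in which it is advertised as a straightforward consequence.
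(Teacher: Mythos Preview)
Your proof is correct and follows essentially the same route as the paper's derivation of \Cref{HCL:packaged} from \Cref{HCL:cover}: apply \Cref{HCL:cover} verbatim, then use the cover $\cG$ to obtain the contradiction $e(\cH_S) \le \sum_{F \in \cG} \deg_{\cH_S}F < w_p(\cG) \cdot e(\cH_S)/(p|C|) \le e(\cH_S)$. Your extra remark justifying the strictness via $\cH_S \neq \emptyset \Rightarrow \cG \neq \emptyset$ is a harmless clarification the paper leaves implicit.
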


We note that condition~\ref{item:non-supersaturated-packaged} in the above theorem is the negation of a balanced supersaturation statement that would enable one to construct a small family of containers for independent sets in $\cH[f(S)]$ using a basic HCL such as \Cref{HCL:efficient}.

We present the derivation of \Cref{HCL:packaged} from \Cref{HCL:cover} in \Cref{sec:deducing-normal-containers} and only mention here that the existence of a hypergraph~$\cG$ with small $p$-weight that covers $\cH[f(S)]$ precludes the existence of a hypergraph $\cH_S \subseteq \cH[f(S)]$ that satisfies the balancedness condition \eqref{eq:p-supersaturated-def}.  We further note that the derivation of \Cref{HCL:packaged} from \Cref{HCL:cover} in fact allows one to prove a strengthening of the former, where the assertion \ref{item:non-supersaturated-packaged} is replaced by the stronger assertion that there does not exist a probability distribution on $\cH[f(S)]$ under which the random edge $\bR \in \cH[f(S)]$ satisfies
\[
  \Pr(L \subseteq \bR) < \frac{p^{\ell-1}}{|f(S)|} \qquad \text{for all $L \subseteq V$ with $2 \le |L| \le r$}.
\]
Note the striking similarity between this condition and the notion of a $p$-spread measure, introduced by Talagrand~\cite{Tal10}, which played a key role in the proof~\cite{FraKahNarPar21} of the fractional version of the `expectation-threshold' conjecture of Kahn and Kalai~\cite{KahKal07}, conjectured by Talagrand~\cite{Tal10}.

Finally, we remark that one may use \Cref{HCL:hardcore} to derive an alternative version of \Cref{HCL:packaged}, where assumption~\eqref{eq:p-supersaturated-def} is replaced by an analogous sequence of upper bounds on the degrees $\Delta_1(\cH_S), \dotsc, \Delta_r(\cH_S)$.  The existence of an $\cH_S \subseteq \cH[f(S)]$ that satisfies such a modified assumption would imply, via Janson's inequality, an upper bound on the probability that the $p$-random set $f(S)_p$ is independent in $\cH_S$, and thus also in $ \subseteq \cH[f(S)]$, that is smaller than the upper bound on this probability asserted by \Cref{HCL:hardcore}.

\subsection{Interpolating between \Cref{HCL:cover,HCL:hardcore}}
\label{sec:interpolating}

Before concluding our discussion, we state one more HCL, where the characterisation of almost independence interpolates between those given by \Cref{HCL:cover,HCL:hardcore}.

\begin{HCL}
  \label{HCL:hardcore-cover}
  Let $\cH$ be a hypergraph with a finite vertex set $V$.
  For all reals $\delta$ and $p$ satisfying $0 < p \le \delta < 1$, there exists a family $\cS \subseteq 2^{V}$ and functions
  \[
    g \colon \cI(\cH)\to \cS
    \qquad \text{and} \qquad
    f \colon \cS\to 2^{V}
  \]
  such that:
  \begin{enumerate}[label=(\alph*)]
  \item
    \label{item:contained-hardcore-cover}
    For each $I\in \cI(\cH)$, we have $g(I) \subseteq I \subseteq f(g(I))$.
  \item
    \label{item:small-fingerprint-hardcore-cover}
    Each $S \in \cS$ has at most $p|V|/\delta$ elements.
  \item
    \label{item:description-containers-hardcore-cover}
    For every $S \in \cS$, letting $C \coloneqq f(S)$, there exists a hypergraph $\cG$ with vertex set $C \setminus S$ and $|E| \ge 2$ for all $E \in \cG$ that covers $\cH[C]$ and satisfies
    \[
      \Pr\bigl(L \subseteq C_p \mid C_p \in \cI(\cG)\bigr) \ge (1-\delta)^{|L|} p^{|L|}
    \]
    for all $L \in \cI(\cG)$.
  \end{enumerate}
\end{HCL}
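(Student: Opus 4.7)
The plan is to design a Kleitman--Winston-style algorithm that, given $I \in \cI(\cH)$, deterministically produces a pair $(S, C)$ and a witness cover $\cG$ with all the required properties, while depending on $I$ only through the fingerprint $S$.

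Maintain a state $(A, S)$ with $A \subseteq V$ and $S \subseteq I \cap A$, initialised to $(V, \emptyset)$. At each step, form the \emph{residual cover}
\[
  \cG_{A,S} \coloneqq \{E \setminus S : E \in \cH[A],\ |E \setminus S| \ge 2\},
\]
which is a hypergraph on $A \setminus S$ with edges of size at least $2$ that covers $\cH[A]$, provided any edge $E$ with $|E \setminus S| \le 1$ has first been resolved: such an $E$ with $|E \setminus S| = 0$ is incompatible with $S \subseteq I \in \cI(\cH)$, whereas $|E \setminus S| = 1$ forces the unique remaining vertex out of $I$, so it may be removed from $A$. Let $\mu = \mu_{A,S}$ be the hardcore measure on $\cI(\cG_{A,S})$ with parameter $p$, i.e., $\mu(J) \propto p^{|J|}(1-p)^{|A\setminus S|-|J|}\1[J \in \cI(\cG_{A,S})]$.

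The stopping rule is to halt with $C = A$ and $\cG = \cG_{A,S}$ as soon as $\mu(L \subseteq J) \ge (1-\delta)^{|L|} p^{|L|}$ for every $L \in \cI(\cG_{A,S})$. Whenever the spread condition fails, pick a canonical minimal violator $L$; by the minimality of $L$ together with the chain-rule identity
\[
  \mu(L \subseteq J) = \mu(L \setminus \{v\} \subseteq J) \cdot \mu\bigl(v \in J \bigm| L \setminus \{v\} \subseteq J\bigr),
\]
every $v \in L$ then satisfies $\mu(v \in J \mid L\setminus \{v\}\subseteq J) < (1-\delta)p$. Select a canonical such $v$ and query the vertices of $L$ in a canonical order: append to $S$ each vertex confirmed to lie in $I$ and delete from $A$ the first vertex (if any) confirmed not to lie in $I$. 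Since $(A, S)$ evolves deterministically from the received answers, the final state is determined by $S$ alone, yielding the functions $g$ and $f$ in the usual way.

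Parts~\ref{item:contained-hardcore-cover} and~\ref{item:description-containers-hardcore-cover} are then immediate: the invariant $S \subseteq I \subseteq A$ is preserved at every step, and the cover $\cG = \cG_{C,S}$ has edges of size at least~$2$, covers $\cH[C]$ on $C \setminus S$ by construction, and satisfies the spread inequality by the termination condition. The delicate part -- and the main obstacle -- is the bound $|S| \le p|V|/\delta$ in~\ref{item:small-fingerprint-hardcore-cover}. Paralleling the potential analysis behind \Cref{HCL:hardcore}, we will track the quantity $\Phi = \Ex_\mu[|J|] \le p|A\setminus S|$ and argue that every vertex appended to $S$ costs at least $\delta$ units of $\Phi$; this should exploit the chain-rule decomposition above together with the negative-correlation properties of the hardcore measure, which control how much conditioning on $L \setminus \{v\} \subseteq J$ can alter the marginals of the remaining vertices. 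Amortising the potential drop across the multi-vertex queries of a single step is the principal technical hurdle.
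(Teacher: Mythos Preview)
The overall architecture is close to the paper's, and parts~\ref{item:contained-hardcore-cover} and~\ref{item:description-containers-hardcore-cover} do follow from your construction.  The genuine gap is exactly where you flag it: the bound on $|S|$.  The potential $\Phi=\Ex_\mu[|J|]$ is not the one that drives the proof of \Cref{HCL:hardcore}, and it will not do the job here.  In the paper's argument the tracked quantity is the partition function $P_i \coloneqq \Pr\bigl(V_p\in\cI(\cH_i)\bigr)$: passing from $\cH_i$ to $\partial^*_{L_i}\cH_i$ is literally conditioning on $L_i\subseteq V_p$, so the violator inequality for $L_i$ gives
\[
  p^{|L_i|}P_{i+1}=\Pr\bigl(L_i\subseteq V_p\wedge V_p\in\cI(\cH_i)\bigr)<(1-\delta)^{|L_i|}p^{|L_i|}P_i,
\]
whence $P_{i+1}<(1-\delta)^{|L_i|}P_i$.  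This is purely multiplicative and needs no correlation input at all; combining it with $(1-p)^{|V|}\le P_J\le1$ yields $|S|\le p|V|/\delta$ in one line.

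Your potential $\Phi$ does not have a comparable structure.  First, the ``negative-correlation properties of the hardcore measure'' you plan to invoke are false for hypergraph independent sets in general: already for the graph with edges $\{1,2\},\{1,3\}$, vertices $2$ and $3$ are \emph{positively} correlated under the hardcore measure.  Second, adding $v$ to $S$ (conditioning on $v\in J$ and removing $v$ from the ground set) changes $\Phi$ by $-\mu(v\in J)+\sum_{u\neq v}\bigl[\mu(u\in J\mid v\in J)-\mu(u\in J)\bigr]$, and without negative correlation the sum can overwhelm the first term; there is no mechanism to extract a uniform $\delta$ drop per appended vertex.  Third, your choice to delete a vertex from $A$ when the query fails, rather than to add $L$ as an edge, makes any ground-set-dependent potential non-monotone: if you tried instead to track $\Pr\bigl((A\setminus S)_p\in\cI(\cG_{A,S})\bigr)$, deletions would push it \emph{up}.

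The remedy is precisely what the paper does: keep the ground set fixed at $V$, maintain an increasing hypergraph $\cH_i$, and when $L_i\nsubseteq I$ add $L_i$ as an edge (so $P_i$ still decreases).  The expectation $\Ex_\mu[|J|]$ does appear in the paper, but only at the very end, via \Cref{prop:key}, to convert the terminal spread condition into the probabilistic conclusion of \Cref{HCL:hardcore}; it plays no role in bounding $|S|$.
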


The proof of \Cref{HCL:hardcore-cover}, which we sketch in \Cref{sec:interpolating-cover-hardcore}, is very similar to the proof of \Cref{HCL:hardcore}, the only difference being that the algorithm that is used here to build the sets $S$ and $C$ considers the more general queries of the form `Is $L \subseteq I$?', as in the proof of \Cref{HCL:cover}.

While it may not be immediately clear, \Cref{HCL:hardcore-cover} simultaneously strengthens both \Cref{HCL:cover,HCL:hardcore}.  We postpone the proofs of these two facts (\Cref{HCL:hardcore-cover}$\implies$\Cref{HCL:cover} and \Cref{HCL:hardcore-cover}$\implies$\Cref{HCL:hardcore}) to \Cref{sec:interpolating-cover-hardcore}.
% Indeed, \Cref{HCL:hardcore} follows from \Cref{HCL:hardcore-cover} since, letting $S$, $C$, and $\cG$ be as in~\ref{item:description-containers-hardcore-cover} above, we have
% \[
%   \Pr\bigl(S \cup C \in \cI(\cH)\bigr) \ge \Pr\bigl((C \setminus S)_p \in \cI(\cG)\bigr) \ge (1-p)^{\delta|C\setminus S|},
% \]
% where the last inequality follows from~\Cref{prop:key}.  Further, one can deduce \Cref{HCL:cover} from \Cref{HCL:hardcore-cover} using a second-moment argument.

\subsection{Organisation}

In \Cref{sec:preliminaries}, we set a few notational conventions, state several auxiliary results, and prove \Cref{prop:prob-vs-covers}.  In \Cref{sec:proof-HCL-cover,sec:proof-HCL-hardcore},  we prove \Cref{HCL:cover,HCL:hardcore}, respectively.  \Cref{sec:deducing-normal-containers} is devoted to derivations of standard HCLs (\Cref{HCL:efficient,HCL:packaged}) from \Cref{HCL:cover,HCL:hardcore}.  In \Cref{sec:interpolating-cover-hardcore}, we sketch the proof of \Cref{HCL:hardcore-cover} and show that it implies both \Cref{HCL:cover,HCL:hardcore}.  In \Cref{sec:concluding-remarks}, we present an attractive conjecture that strengthens \Cref{prop:prob-vs-covers} and discuss some of its implications.  Finally, \Cref{sec:three-proofs-key-prop} contains three proofs of one of our key auxiliary results, \Cref{prop:key}.

% \section{The classical hypergraph container theorem}

% \begin{theorem}\label{cor:normalcontainers}
% Let $\cH$ be an $r$-uniform hypergraph on $n$ vertices. Suppose that $\tau \in (0,1)$ and $K > 0$ are such that 
% \begin{equation}\label{eq:container:Delta:bounds}
% \Delta_\ell(\cH) \leq K \cdot \bigg( \frac{\tau}{2^5 K r^2} \bigg)^{\ell -1} \, \frac{e(\cH)}{v(\cH)}.
% \end{equation}
% for every $\ell \in \{1,\ldots, r\}$. Then there exists a family $\cS$ of subsets of $V(\cH)$, %$\cS \subset 2^{V(H)}$ 
% and functions 
% $$g \colon \cI(\cH)\to \cS \qquad \text{and} \qquad f \colon \cS\to 2^{V(\cH)},$$ 
% such that
% \begin{enumerate}[label=(\alph*)]
% \item
%   For each $I \in \cI(\cH)$ we have $g(I) \subseteq I \subseteq f(g(I))$,
% \item
%   For each $S \in \cS$ we have $|S| \le \tau n$,
% \item
%   For each $S \in \cS$ we have $|f(S)| \leq (1-\delta) n$,
% \end{enumerate}
% where $\delta = (2K)^{-1}$.
% \end{theorem}

\section{Preliminaries}
\label{sec:preliminaries}

\subsection{Notation}
\label{sec:notation}

Given a hypergraph $\cH$ and a positive integer $s$, we define
\[
  \cH^s \coloneqq \{E \in \cH : |E| = s\}.
\]
It will be also convenient to introduce the shorthand notations
\[
  \cH^{>1} \coloneqq \bigcup_{s > 1} \cH^s
  \qquad
  \text{and}
  \qquad
  \cH^{<r} \coloneqq \bigcup_{s < r} \cH^s.
\]
Finally, for every $L \subseteq V(\cH)$, we denote by $\partial_L\cH$ the link of $L$ in $\cH$, i.e.,
\[
  \partial_L\cH \coloneqq \{E \setminus L : L \subseteq E \in \cH\};
\]
for the sake of brevity, we will write $\partial_v \cH$ in place of $\partial_{\{v\}}\cH$.

\subsection{Auxiliary results}
\label{sec:auxiliary}

While comparing \Cref{HCL:hardcore} to other HCLs, we will crucially use the following well-known inequality due to Janson~\cite{Jan90}.

\begin{theorem}[Janson's Inequality]
  \label{thm:Janson}
  Suppose that $\cG$ is a hypergraph on a finite set~$C$.
  For every $p \in [0,1]$,
  \[
    \Pr(C_p \in \cI(\cG)) \le \exp\left(- \frac{\mu^2}{2\Delta^*}\right),
  \]
  where
  \[
    \mu \coloneqq \sum_{A \in \cG} p^{|A|}
    \qquad
    \text{and}
    \qquad
    \Delta^* \coloneqq \sum_{\substack{A, B \in \cG \\ A \cap B \neq \emptyset}} p^{|A \cup B|}
  \]
  and the second sum ranges over all ordered pairs ($A$ and $B$ are not necessarily distinct).
\end{theorem}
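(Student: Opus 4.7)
The plan is to derive the bound in two stages: first establish the basic Janson inequality $\Pr(C_p \in \cI(\cG)) \le \exp(-\mu + \Delta/2)$, where $\Delta \coloneqq \sum_{A \ne B,\, A \cap B \ne \emptyset} p^{|A \cup B|}$ (ordered pairs), and then upgrade to the asserted $\mu^2/(2\Delta^*)$ form via a sub-hypergraph sampling trick.

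For the basic version, I would enumerate the edges of $\cG$ as $A_1,\ldots,A_m$, set $B_i \coloneqq \{A_i \subseteq C_p\}$, and telescope
\[
-\log \Pr(C_p \in \cI(\cG)) = -\sum_{i=1}^m \log(1 - q_i) \ge \sum_{i=1}^m q_i, \qquad q_i \coloneqq \Pr\bigl(B_i \mid \overline{B_1} \cap \cdots \cap \overline{B_{i-1}}\bigr).
\]
To lower-bound each $q_i$, I would partition $\{j < i\}$ into the dependent indices $\cN_i \coloneqq \{j < i : A_j \cap A_i \ne \emptyset\}$ and their complement, and set $L_i \coloneqq \bigcap_{j < i,\, j \notin \cN_i} \overline{B_j}$. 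Since $B_i$ is supported on coordinates disjoint from those of $L_i$, the two events are independent; moreover, since $L_i$ is a decreasing event while each $B_i \cap B_j$ is increasing, FKG gives $\Pr(B_i \cap B_j \cap L_i) \le \Pr(B_i \cap B_j)\Pr(L_i)$. Combined with the union bound (applied inside the numerator $\Pr(B_i \cap \bigcap_{j<i}\overline{B_j})$) and the trivial inclusion $\bigcap_{j<i}\overline{B_j} \subseteq L_i$, the common factor $\Pr(L_i)$ cancels and yields $q_i \ge \Pr(B_i) - \sum_{j \in \cN_i}\Pr(B_i \cap B_j)$ (in the worst case simply $q_i \ge 0 \ge$ the right-hand side). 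Summing over $i$ gives $\sum q_i \ge \mu - \Delta/2$, which is basic Janson.

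To pass to the $\mu^2/(2\Delta^*)$ form, I would apply basic Janson to a random sub-hypergraph $\cG' \subseteq \cG$ in which each edge is retained independently with probability $q \in (0,1]$. Since $\cI(\cG) \subseteq \cI(\cG')$, for every realisation of $\cG'$ we get $-\log \Pr(C_p \in \cI(\cG)) \ge \mu(\cG') - \Delta(\cG')/2$; the left-hand side is deterministic, so it is bounded below by the expectation of the right-hand side, which by linearity equals $q\mu - q^2\Delta/2$. Setting $q \coloneqq \mu/\Delta^* \in [0,1]$ (valid since $\Delta^* = \mu + \Delta \ge \mu$) reduces the bound to $\mu^2/(2\Delta^*)$ after a one-line algebraic check whose only non-trivial step is $\mu \ge 0$. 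I expect the main obstacle to be the FKG-and-cancellation step inside basic Janson: the clean lower bound on $q_i$ only emerges because the factor $\Pr(L_i)$ produced by FKG cancels against a denominator bounded above by the same quantity. The sampling upgrade to $\mu^2/(2\Delta^*)$ is, by contrast, essentially algebraic once basic Janson is in hand.
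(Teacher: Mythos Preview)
The paper does not prove \Cref{thm:Janson}; it is quoted as a known auxiliary result with a citation to Janson's original paper, so there is no ``paper's own proof'' to compare against. Your outline is a correct and standard derivation: the first stage is the classical conditioning--FKG argument for the basic Janson bound, and the second stage is the well-known random-subsampling trick (retaining each edge with probability $q$ and optimising over $q$) that upgrades it to the extended form $\exp(-\mu^2/(2\Delta^*))$. Your identification $\Delta^* = \mu + \Delta$ and the choice $q = \mu/\Delta^*$ are exactly right, and the check $q\mu - q^2\Delta/2 \ge \mu^2/(2\Delta^*)$ reduces to $\Delta \le \Delta^*$, which holds since $\mu \ge 0$.
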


Our proof of \Cref{HCL:hardcore}, as well as the derivation of this theorem from \Cref{HCL:hardcore-cover},  requires the following probabilistic estimate.  Since this estimate follows from standard techniques, we postpone its proof to the appendix (where we in fact present three different proofs).

\begin{prop}
  \label{prop:key}
  Suppose that $C$ is a finite set and let $\cI \subseteq 2^C$ be a decreasing family of subsets. For every $p \in (0,1)$, we have
  \[
    \log \Pr(C_p \in \cI) \ge \left(|C| - \frac{\Ex\bigl[|C_p| \mid C_p \in \cI\bigr]}{p}\right) \cdot \log(1-p).
  \]
\end{prop}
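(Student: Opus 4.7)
The plan is to reinterpret $\Pr(C_p \in \cI)$ as the survival function of a stopping time in an auxiliary continuous-time process, and then to derive the inequality from the Birnbaum--Esary--Marshall (BEM) closure theorem for IFRA distributions.

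Concretely, I would assign to each $v \in C$ an independent exponential clock $T_v \sim \mathrm{Exp}(1)$. For every $u \ge 0$, the set $A_u := \{v \in C : T_v \le u\}$ is a $p$-random subset of $C$ with $p = 1 - e^{-u}$, so, since $\cI$ is downward closed, the stopping time
\[
\tau := \sup\{u \ge 0 : A_u \in \cI\}
\]
is well-defined (with $\tau = \infty$ if $C \in \cI$) and satisfies $\Pr(\tau > u) = \Pr(C_p \in \cI)$. Writing $H(u) := -\log \Pr(\tau > u)$ and $h(u) := H'(u)$, a chain-rule computation based on $\frac{d}{dp} \log \Pr(C_p \in \cI) = \frac{\Ex[|C_p| \mid \cI] - p|C|}{p(1-p)}$ and $\frac{dp}{du} = 1-p$ yields
\[
h(u) = |C| - \frac{\Ex[|C_p| \mid C_p \in \cI]}{p}.
\]
Consequently, the inequality we wish to prove becomes $H(u) \le u \cdot h(u)$; and differentiating $H(u)/u$ reveals that this is equivalent to the statement that $u \mapsto H(u)/u$ is non-decreasing, i.e., that $\tau$ has an Increasing Failure Rate Average (IFRA) distribution.

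It then suffices to observe that $\tau$ is exactly the lifetime of the coherent (monotone) system on $|C|$ components with iid $\mathrm{Exp}(1)$ lifetimes and structure function $\phi(V) = \1[V \in \cJ]$, where $\cJ = \{V \subseteq C : C \setminus V \in \cI\}$ is an up-set. Since the $\mathrm{Exp}(1)$ distribution is itself IFRA, the BEM closure theorem yields that $\tau$ is IFRA, completing the proof.

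I expect the main hurdle to be the application of the IFRA closure theorem. One may either cite it as a black box or give a self-contained proof by induction on $|C|$, via the pivotal decomposition $h_\phi(q_1, \ldots, q_n) = q_n h_{\phi_1}(q_{-n}) + (1-q_n) h_{\phi_0}(q_{-n})$ of the multilinear reliability function combined with the elementary inequality
\[
q^\alpha a^\alpha + (1 - q^\alpha) b^\alpha \ge (qa + (1-q)b)^\alpha, \qquad a \ge b \ge 0,\ q, \alpha \in [0, 1],
\]
which reduces to verifying that the difference of the two sides, viewed as a function of $b \in [0, a]$, vanishes at both endpoints and has a unique interior critical point.
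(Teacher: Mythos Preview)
Your argument is correct. The exponential reparametrisation $p = 1 - e^{-u}$ turns the claimed inequality into $H(u) \le u\,H'(u)$, which is exactly the differential form of the IFRA property for the system lifetime $\tau$, and the BEM closure theorem then finishes the job. (You should note the trivial case $\cI = \emptyset$ separately, and perhaps remark that $P(p)$ is a polynomial so $H$ is smooth and IFRA does yield the pointwise derivative inequality; but these are routine.)

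The paper gives three proofs---via the chain rule for Kullback--Leibler divergence, via Kruskal--Katona and a compression argument, and via the $p$-biased edge-isoperimetric inequality $q \cdot \Inf_q(f) \ge \Ex[f(C_q)] \cdot \log_q \Ex[f(C_q)]$ of Kahn and Kalai. Your route is genuinely distinct from the first two. It is, however, secretly the same as the third: by Russo--Margulis, $\frac{d}{dq}\Ex[f(C_q)] = \Inf_q(f)$, and a short computation shows that the Kahn--Kalai inequality is precisely the statement $(H(u)/u)' \ge 0$ after the substitution $q = e^{-u}$. So both proofs rest on the same core fact, but reach it by different means: the paper cites Kahn--Kalai as a black box, whereas you propose the classical inductive proof of BEM via the pivotal decomposition and the elementary two-variable inequality. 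Your packaging has the advantage of being fully self-contained and of exposing a pleasant structural interpretation (the inequality \emph{is} an IFRA statement); the paper's third proof is shorter if one is willing to quote the isoperimetric inequality.
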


Finally, we will also employ the following well-known inequality, discovered independently by Bollobás~\cite{Bol65}, Lubell~\cite{Lub66}, Meshalkin~\cite{Mes63}, and Yamamoto~\cite{Yam54}.

\begin{prop}[LYMB inequality]
  \label{prop:LYMB}
  Suppose that $X$ is a finite set and that $\cA \subseteq 2^X$ is an antichain.  Then,
  \[
    \sum_{A \in \cA} \binom{|X|}{|A|}^{-1} \le 1.
  \]  
\end{prop}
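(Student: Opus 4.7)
The plan is to prove the LYMB inequality via the classical random-maximal-chain argument. Writing $n = |X|$, I would consider a uniformly random maximal chain
\[
  \emptyset = X_0 \subsetneq X_1 \subsetneq \dotsb \subsetneq X_n = X
\]
in the Boolean lattice $2^X$, obtained by sampling a uniformly random bijection $\pi \colon \br{n} \to X$ and setting $X_k \coloneqq \{\pi(1), \dotsc, \pi(k)\}$. The first key observation is that, for any fixed $A \in \cA$ of size $k$, the set $X_k$ is distributed as a uniformly random $k$-subset of $X$, so the probability that $A$ appears in the chain equals $\binom{n}{k}^{-1}$.

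Next I would let $N$ denote the number of members of $\cA$ lying in the chain $X_0, \dotsc, X_n$, and apply linearity of expectation to obtain
\[
  \Ex[N] = \sum_{A \in \cA} \binom{n}{|A|}^{-1}.
\]
To conclude, I would invoke the antichain hypothesis: any two distinct elements of $\cA$ are incomparable under inclusion, while the $X_k$ form a totally ordered chain, so at most one element of $\cA$ can appear among them. Hence $N \le 1$ pointwise, and taking expectations yields the desired bound.

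There is no substantive obstacle here; the argument is a staple of extremal set theory and fits in a single paragraph once the random chain is introduced. The only item worth double-checking is the claim that $X_k$ is uniformly distributed over the $k$-subsets of $X$, which is immediate from the symmetry of the uniform measure on the symmetric group $S_X$. Degenerate cases such as $A = \emptyset$ or $A = X$ (each contributing $1$ to the sum and forcing $\cA$ to be a singleton) are handled uniformly by the same formula.
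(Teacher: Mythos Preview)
Your argument is the standard Lubell permutation/random-chain proof and is entirely correct. The paper does not actually prove this proposition; it merely states it as a well-known inequality with references to \cite{Bol65,Lub66,Mes63,Yam54}, so there is nothing to compare against beyond noting that your proof is precisely the classical one attributed to Lubell.
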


\subsection{Proof of \Cref{prop:prob-vs-covers}}
\label{sec:proof-prob-vs-covers}

The first assertion is a straightforward application of Harris's inequality.
Indeed, for every hypergraph $\cG \subseteq 2^V \setminus \{\emptyset\}$ that covers $\cH$, we have
\[
  \Pr(V_p \in \cI(\cH)) \ge \Pr(V_p \in \cI(\cG)) \ge \prod_{A \in \cG} (1-p^{|A|}) \ge \exp\left(-2\sum_{A \in \cG}p^{|A|}\right),
\]
where the final inequality holds as $1-x \ge e^{-2x}$ for all $x < 1/2$.

We now prove the second assertion.
Suppose that $\cH$ is $r$-uniform and define, for each $\ell \in \br{r}$, $\lambda_\ell \coloneqq 4^{-\min\{\ell, r-\ell\}}$.
Let $\cH'$ be a maximal subgraph of $\cH$ subject to $\Delta_\ell(\cH') \le \lambda_\ell \binom{r}{\ell}^{-1}p^{\ell-r}$ for all $\ell \in \br{r}$.
Define $\mu \coloneqq \sum_{A \in \cH'} p^{|A|} = e(\cH') p^r$ and
\[
  \Delta^* \coloneqq \sum_{A \in \cH'} p^{|A|} \cdot \sum_{\substack{B \in \cH' \\ A \cap B \neq \emptyset}} p^{|B\setminus A|}
  \le \mu \cdot \sum_{\ell=1}^r \binom{r}{\ell}\Delta_\ell(\cH')p^{r-\ell} \le \mu \cdot \sum_{\ell=1}^r \lambda_\ell \le 2\mu.
\]
Let $\cG$ be the set of minimal elements in the family
\[
  \cG' \coloneqq
  \left\{
    T \subseteq V : 1 \le |T| \le r \wedge \deg_{\cH'}T = \left\lfloor \lambda_{|T|}\binom{r}{|T|}^{-1}p^{|T|-r} \right\rfloor
  \right\}
\]
and note that $\cH \subseteq \<\cG'\> = \<\cG\>$ by maximality of $\cH'$.
Since $\cG$ is an antichain, the LYMB inequality (\Cref{prop:LYMB}) yields
\[
  e(\cH') \ge \sum_{A \in \cH'} \sum_{\substack{T \in \cG \\ T \subseteq A}} \binom{r}{|T|}^{-1} = \sum_{T \in \cG} \binom{r}{|T|}^{-1} \deg_{\cH'}T.
\]
Since our upper-bound assumption on $p$ yields
\[
  \lambda_\ell\binom{r}{\ell}^{-1}p^{\ell-r} \ge (4 r p)^{\ell-r} \ge 1
\]
for every $\ell \in \br{r}$, we may conclude that
\[
  \mu = e(\cH')p^r \ge \frac{1}{2} \sum_{T \in \cG} \lambda_{|T|}\binom{r}{|T|}^{-2}p^{|T|} \ge \frac{1}{2}\sum_{T \in \cG} \left(\frac{p}{4r^2}\right)^{|T|} = \frac{w_{p/(4r^2)}(\cG)}{2}.
\]
We may now apply Janson's inequality (\Cref{thm:Janson}) to conclude that
\[
  \Pr(V_p \in \cI(\cH)) \le \Pr(V_p \in \cI(\cH')) \le \exp\left(-\frac{\mu^2}{2\Delta^*}\right) \le \exp\left(-\frac{\mu}{4}\right) \le \exp\left(-\frac{w_{p/(4r^2)}(\cG)}{8}\right),
\]
as desired.

\section{Proof of \Cref{HCL:cover}}
\label{sec:proof-HCL-cover}

% The motivation for the choice of $L_i$ above is that we always want to push weight towards the smaller uniformities while also guaranteeing certain maximum co-degree conditions. This way most of the weight will eventually go to edges of size 1 which will imply $w_p(\cH^{(i)}_*) < p |C_{i}|$. The condition that $L_i$ satisfies \eqref{eq:choosing:s} will imply that the algorithm is always be pushing enough weight towards the smaller uniformities. On the other hand, the minimality of $s$ guarantees that the co-degrees of $\cG^{(i)}_j$ will be appropriately bounded for all $j<r$ and the maximality of $L_i$ guarantees the same for $\cG_s^{(i)}(L_i)$.

\subsection{The algorithm}

Assume that $\cH$ is an $r$-uniform hypergraph on a finite set $V$ and fix some $p \in (0,1/(8r^2)]$.
The following algorithm takes as input an independent set $I \in \cI(\cH)$ and returns sets $S, C \subseteq V$ and a hypergraph $\cG$ on $V$.

\begin{enumerate}[label=(\arabic*)]
\item
  Let $\cH_0 \coloneqq \cH$ and $S_0 \coloneqq \emptyset$.
\item
  Do the following for $i = 0, 1, \dotsc$:
  \begin{enumerate}[label=(\alph*),ref=(\arabic{enumi}\alph*)]
  \item
    Let $C_i \coloneqq \{v \in V : \{v\} \notin \cH_i\}$.
  \item
    \label{item:stopping-condition}
    If $w_p(\cH_i^{>1}) \le p|C_i|$, then set $J \coloneqq i$ and \texttt{STOP}.
  \item
    \label{item:dfn-si-Li}
    Otherwise, let $s_i$ be the smallest $s \in \{2, \dotsc, r\}$ such that $w_p(\partial_L \cH_i^s) \ge 1/(4r)$ for some nonempty $L \in 2^V \setminus \cH_i$ and let $L_i$ be an inclusion-maximal set $L$ with this property.\footnote{It may not be clear at this point that such $s$ and $L$ exist, but we will prove this shortly.}
  \item
    \label{item:Li-in-I}
    If $L_i \subseteq I$, then set $S_{i+1} \coloneqq S_i \cup L_i$ and $\cF_i \coloneqq \partial_{L_i} \cH_i^{s_i}$.
  \item
    \label{item:Li-not-in-I}
    Otherwise, if $L_i \nsubseteq I$, set $S_{i+1} \coloneqq S_i$ and $\cF_i \coloneqq \{L_i\}$.
  \item
    Finally, set $\cH_{i+1} \coloneqq (\cH_i \setminus \<\cF_i\>) \cup \cF_i$.
  \end{enumerate}
\item
  Return $S \coloneqq S_J$, $C \coloneqq C_J$, and $\cG \coloneqq \cH_J^{>1}$.
\end{enumerate}

\subsection{Well-definedness}
\label{sec:well-definedness-cover}

We now show that the algorithm described in the previous section is well defined and that it terminates on every input.
Our first lemma takes care of the former and shows that the definition of $s_i$ and $L_i$ in step \ref{item:dfn-si-Li} is always valid.

\begin{lemma}\label{lem:largelink}
  If $w_p(\cH_i^{>1}) \geq p |C_i|$, then there are $s \in \{2, \dotsc, r\}$ and nonempty $L \in 2^V \setminus \cH_i$ with $w_p(\partial_L\cH_i^s) \ge 1/r$.
\end{lemma}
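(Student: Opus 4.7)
The plan is to reduce the lemma to a simple double-counting argument after first establishing the following structural invariant maintained throughout the algorithm: for every $i$, every edge of $\cH_i$ of size at least $2$ is contained in $C_i$; equivalently, no vertex $v$ with $\{v\} \in \cH_i$ lies in any edge of $\cH_i$ of size $\ge 2$.

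I would prove this invariant by induction on $i$. The base case is immediate since $\cH_0$ is $r$-uniform and $C_0 = V$. For the inductive step, the crucial observation is that $\cF_i$ is always an antichain: in case~\ref{item:Li-in-I} every element of $\cF_i = \partial_{L_i}\cH_i^{s_i}$ has size $s_i - |L_i|$, and in case~\ref{item:Li-not-in-I} $\cF_i$ has a single element. Fix any $v$ with $\{v\} \in \cH_{i+1}$. If $\{v\} \in \cH_i$, then the inductive hypothesis prohibits $v$ from lying in any edge of $\cH_i^{>1}$, so the surviving edges in $\cH_i \setminus \<\cF_i\>$ cause no trouble; for edges freshly introduced by $\cF_i$, a short check based on the definition of $\cF_i$ (in case~\ref{item:Li-not-in-I} one uses that $w_p(\partial_{L_i}\cH_i^{s_i}) \ge 1/(4r) > 0$ produces a witness $E \in \cH_i^{s_i}$ with $L_i \subseteq E$) shows that any new edge in $\cF_i^{>1}$ through $v$ would force an edge of $\cH_i^{>1}$ through $v$, contradicting the inductive hypothesis. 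If instead $\{v\} \in \cF_i \setminus \cH_i$, then every old edge of $\cH_i^{>1}$ through $v$ is a superset of $\{v\}$ and hence is swept away by $\<\cF_i\>$, while the antichain property of $\cF_i$ rules out any other element of $\cF_i$ through $v$.

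Granted the invariant, the lemma follows by one computation. Since $|E \cap C_i| = s$ for every $E \in \cH_i^s$ with $s \ge 2$,
\[
\sum_{v \in C_i} \sum_{s=2}^r w_p(\partial_v \cH_i^s) \;=\; \sum_{s=2}^r p^{s-1} \sum_{E \in \cH_i^s} |E \cap C_i| \;=\; \sum_{s=2}^r \frac{s}{p} \cdot w_p(\cH_i^s) \;\ge\; \frac{2}{p} \cdot w_p(\cH_i^{>1}) \;\ge\; 2|C_i|.
\]
Averaging over $v \in C_i$ produces a vertex $v \in C_i$ with $\sum_{s=2}^r w_p(\partial_v \cH_i^s) \ge 2$, and pigeonholing over the $r-1$ summands yields an $s \in \{2,\dotsc,r\}$ with $w_p(\partial_v \cH_i^s) \ge 2/(r-1) \ge 1/r$. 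Setting $L \coloneqq \{v\}$, which is nonempty and not an edge of $\cH_i$ since $v \in C_i$, delivers the desired pair $(s, L)$.

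The delicate part is the invariant, whose verification requires carefully tracking how the update $\cH_{i+1} = (\cH_i \setminus \<\cF_i\>) \cup \cF_i$ interacts with both pre-existing and newly created singleton edges; everything after that is a one-line double-counting followed by two pigeonhole steps.
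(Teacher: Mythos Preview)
Your proof is correct, and the double-counting/pigeonhole computation is essentially identical to the paper's. The only difference is that you explicitly establish the invariant that every edge of $\cH_i^{>1}$ lies inside $C_i$, whereas the paper's one-line proof uses this fact implicitly---in the paper it is an immediate consequence of $\cH_i$ being an antichain (\Cref{lemma:Hi-antichain}), proved just afterwards and not relying on the present lemma.
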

\begin{proof}
  Note first that our assumption implies that
  \[
    p|C_i| \le w_p(\cH_i^{>1}) = \sum_{s = 2}^r w_p(\cH_i^s) \le \sum_{s=2}^r s \cdot w_p(\cH_i^s) = \sum_{s=2}^r \sum_{v \in C_i} p \cdot w_p(\partial_v\cH_i^s).
  \]
  Consequently, there must be some $v \in C_i$ such that
  \[
    \sum_{s=2}^r w_p(\partial_v \cH_i^s) \ge 1.
  \]
  Since the fact that $v \in C_i$ guarantees that $\{v\} \notin \cH_i$, we may take $L \coloneqq \{v\}$ and an index~$s$ that achieves $\max_s w_p(\partial_v\cH_i^s) \ge 1/r$.
\end{proof}

Next, we show that each hypergraph $\cH_i$ defined in the algorithm is an antichain and that the sequence $(\<\cH_i\>)_i$ is strictly increasing.  Note that this implies that the algorithm terminates on every input.  Indeed, the fact that $\cH_i$ is an antichain means that it comprises precisely all the minimal elements of $\<\cH_i\>$ and therefore $\cH_0, \cH_1, \dotsc$ are pairwise distinct.

\begin{lemma}
  \label{lemma:Hi-antichain}
  The hypergraph $\cH_i$ is an antichain for every $i$.
\end{lemma}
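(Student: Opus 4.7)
The plan is to argue by induction on $i$. The base case $i=0$ holds since $\cH_0 = \cH$ is $r$-uniform, so all its edges have common size~$r$ and the hypergraph is trivially an antichain. For the inductive step, I would assume $\cH_i$ is an antichain and show the same for
\[
  \cH_{i+1} = (\cH_i \setminus \<\cF_i\>) \cup \cF_i,
\]
splitting into three types of potential comparisons: (a)~both edges in $\cH_i \setminus \<\cF_i\>$, (b)~both edges in $\cF_i$, and (c)~one edge from each part. Case~(a) is inherited directly from the inductive hypothesis. Case~(b) is also immediate: in step~\ref{item:Li-in-I} one has $\cF_i = \partial_{L_i}\cH_i^{s_i}$, all of whose members have common size $s_i - |L_i|$, while in step~\ref{item:Li-not-in-I} the family $\cF_i = \{L_i\}$ is a singleton.

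The main work will be case~(c). Fix $E \in \cH_i \setminus \<\cF_i\>$ and $F \in \cF_i$. The very definition of $\<\cF_i\>$ gives $F \not\subseteq E$ (otherwise $E$ would lie in $\<\cF_i\>$), so the only remaining possibility to rule out is $E \subsetneq F$. I would handle the two sub-cases separately, each time deriving a contradiction to the antichain property of $\cH_i$ from a structural feature of $L_i$. In step~\ref{item:Li-in-I}, $F = E' \setminus L_i$ for some $L_i \subseteq E' \in \cH_i^{s_i}$; since $L_i$ is a nonempty subset of $E'$ we have $F \subsetneq E'$, and then $E \subseteq F$ forces $E \subsetneq E'$ with two distinct elements of $\cH_i$, a contradiction. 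In step~\ref{item:Li-not-in-I}, $F = L_i \notin \cH_i$, so $E \subsetneq L_i$; here I would invoke the choice of $L_i$ in step~\ref{item:dfn-si-Li}, namely $w_p(\partial_{L_i}\cH_i^{s_i}) \geq 1/(4r) > 0$, to produce some $E^\star \in \cH_i^{s_i}$ containing $L_i$, and then $E \subsetneq L_i \subseteq E^\star$ again contradicts the antichain property of $\cH_i$.

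The principal subtlety, I expect, lies in case~(c) for step~\ref{item:Li-not-in-I}: it is precisely the positivity of $w_p(\partial_{L_i}\cH_i^{s_i})$ that prevents $L_i$ from sitting strictly above an existing edge of $\cH_i$, and this is what propagates the antichain property forward. The inclusion-maximality of $L_i$ from step~\ref{item:dfn-si-Li} is not needed for this lemma, only for later steps of the overall argument.
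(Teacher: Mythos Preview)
Your proof is correct and follows essentially the same approach as the paper's: induction on $i$, with the inductive step handled by checking that neither $\cF_i$ nor $\cH_i \setminus \<\cF_i\>$ can contain a pair in strict containment, and that a mixed pair $E \in \cH_i \setminus \<\cF_i\>$, $F \in \cF_i$ with $E \subsetneq F$ leads to $E \subsetneq G$ for some $G \in \cH_i$ (taking $G = F \cup L_i$ in step~\ref{item:Li-in-I} and using $w_p(\partial_{L_i}\cH_i^{s_i}) > 0$ to find $G \supseteq L_i$ in step~\ref{item:Li-not-in-I}). The only cosmetic difference is that the paper unifies your two sub-cases of~(c) into a single sentence, and observes that $\cF_i$ is an antichain rather than (as you do, equivalently) that it is uniform.
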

\begin{proof}
  We prove the assertion of the lemma by induction on $i$.  The induction basis holds due to our assumption that $\cH_0 = \cH$ is $r$-uniform.  For the induction step, assume that $\cH_i$ is an antichain for some $i \ge 0$.  Observe first that $\cF_i$ is an antichain as well.  Indeed, this is clear when $\cF_i = \{L_i\}$; otherwise $\cF_i \subseteq \partial_{L_i} \cH_i$ and this is a straightforward consequence of the assumption that $\cH_i$ is an antichain.
  Suppose that $\cH_{i+1} = (\cH_i \setminus \<\cF_i\>) \cup \cF_i$ contained a pair of sets $E, F$ with $E \subsetneq F$.  Since both $\cH_i$ and $\cF_i$ are antichains, then either $E \in \cF_i$ and $F \in \cH_i \setminus \<\cF_i\>$, which is clearly impossible, or vice-versa.  However, if $F \in \cF_i$, then $F \subseteq G$ for some $G \in \cH_i$.  (Indeed, when $\cF_i = \partial_{L_i} \cH_i^{s_i}$, we may take $G = F \cup L_i$; otherwise, $F = L_i$ and the existence of such a $G$ follows from the fact that $w_p(\partial_{L_i}\cH_i) > 0$.)  As a result, if there was $E \in \cH_i$ with $E \subsetneq F$, then $E \subsetneq G$ for some $G \in \cH_i$, contradicting the inductive assumption that $\cH_i$ is an antichain.
\end{proof}

\begin{lemma}
  \label{lemma:upset-strictly-increasing}
  We have $\<\cH_i\> \subsetneq \<\cH_{i+1}\>$ and $\cF_i \cap \<\cH_i\> = \emptyset$ for every $i$.
\end{lemma}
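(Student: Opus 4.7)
The plan is to establish the second conclusion $\cF_i \cap \<\cH_i\> = \emptyset$ first, because both halves of the first conclusion ($\<\cH_i\> \subseteq \<\cH_{i+1}\>$ and the strictness) will drop out quickly from it and from the definition $\cH_{i+1} = (\cH_i \setminus \<\cF_i\>) \cup \cF_i$. Throughout I would lean on the antichain property of $\cH_i$ supplied by \Cref{lemma:Hi-antichain}, which is the only nontrivial ingredient needed.

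For $\cF_i \cap \<\cH_i\> = \emptyset$, I would split into the two cases produced by the algorithm. In the branch \ref{item:Li-in-I}, where $\cF_i = \partial_{L_i}\cH_i^{s_i}$, take any $E' = E \setminus L_i$ with $L_i \subseteq E \in \cH_i^{s_i}$. Since $L_i \neq \emptyset$, we have $E' \subsetneq E$; if some $F \in \cH_i$ satisfied $F \subseteq E'$, then $F \subsetneq E$ with both $F, E \in \cH_i$, contradicting \Cref{lemma:Hi-antichain}. In the branch \ref{item:Li-not-in-I}, where $\cF_i = \{L_i\}$, the goal reduces to $L_i \notin \<\cH_i\>$. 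Since $L_i \notin \cH_i$ by the choice in \ref{item:dfn-si-Li}, it suffices to rule out a proper subset $E \subsetneq L_i$ with $E \in \cH_i$. The condition $w_p(\partial_{L_i}\cH_i^{s_i}) \ge 1/(4r) > 0$ guarantees the existence of some $F \in \cH_i^{s_i}$ with $L_i \subseteq F$; combined with the hypothetical $E$, this again produces $E \subsetneq F$ in $\cH_i$, violating the antichain property.

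Having established $\cF_i \cap \<\cH_i\> = \emptyset$, I would prove $\<\cH_i\> \subseteq \<\cH_{i+1}\>$ by taking $E \in \cH_i$ and observing that either $E \notin \<\cF_i\>$, in which case $E \in \cH_{i+1}$ directly, or $E \in \<\cF_i\>$, in which case $E$ contains some edge of $\cF_i \subseteq \cH_{i+1}$; in both cases $E \in \<\cH_{i+1}\>$, and the inclusion passes to up-sets. For strictness, I note that $\cF_i$ is nonempty in both branches (trivially in \ref{item:Li-not-in-I}; in \ref{item:Li-in-I} because $w_p(\partial_{L_i}\cH_i^{s_i}) > 0$), pick any $E' \in \cF_i$, and use $E' \in \<\cH_{i+1}\>$ together with $\cF_i \cap \<\cH_i\> = \emptyset$ to certify $E' \in \<\cH_{i+1}\> \setminus \<\cH_i\>$.

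The only delicate step is the branch $\cF_i = \{L_i\}$: there one must remember that the definition of $L_i$ only guarantees $L_i \notin \cH_i$, not the stronger $L_i \notin \<\cH_i\>$, and so one genuinely needs the supersaturation witness $w_p(\partial_{L_i}\cH_i^{s_i}) > 0$ to produce a superset $F \in \cH_i^{s_i}$ that forces the antichain contradiction. Everything else is bookkeeping around the identity $\cH_{i+1} = (\cH_i \setminus \<\cF_i\>) \cup \cF_i$.
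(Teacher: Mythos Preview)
Your proof is correct and follows essentially the same approach as the paper: both arguments reduce to showing $\cF_i \neq \emptyset$ and $\cF_i \cap \<\cH_i\> = \emptyset$, handle the two branches of the algorithm separately, and in each branch exhibit every $F \in \cF_i$ as a proper subset of some $E \in \cH_i$, so that the antichain property from \Cref{lemma:Hi-antichain} forbids any element of $\cH_i$ from lying below $F$. The only cosmetic difference is that the paper records the identity $\<\cH_{i+1}\> = \<\cH_i\> \cup \<\cF_i\>$ up front, whereas you verify the inclusion $\<\cH_i\> \subseteq \<\cH_{i+1}\>$ by the equivalent elementwise case split on whether $E \in \<\cF_i\>$.
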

\begin{proof}
  The definition of $\cH_{i+1}$ implies that $\<\cH_{i+1}\> = \<\cH_i\> \cup \<\cF_i\> \supseteq \<\cH_i\>$.  To complete the proof of the lemma, it is therefore enough to show that $\cF_i \neq \emptyset$ and $\cF_i \cap \<\cH_i\> = \emptyset$.  The former statement is clear when $\cF_i = \{L_i\}$; otherwise, it follows from the assumption that $\partial_{L_i} \cH_i^{s_i} \neq \emptyset$.  For the latter statement, since $\cH_i$ is an antichain (by \Cref{lemma:Hi-antichain}), it is enough to argue that every element of $\cF_i$ is a proper subset of some element of $\cH_i$.  When $\cF_i = \{L_i\}$, this is the case since $L_i \notin \cH_i$ and $\partial_{L_i} \cH_i \neq \emptyset$; otherwise, if $\cF_i = \partial_{L_i} \cH_i^{s_i}$, this follows as $L_i \neq \emptyset$.
\end{proof}

\subsection{Properties}
\label{sec:properties-cover}

We now turn to establishing some key properties of the algorithm.  We will show that, for every input $I \in \cI(\cH)$, we have $S \subseteq I \subseteq C$, the set $S$ has at most $8r^2p|V|$ elements, and the hypergraph $\cG$ (which clearly satisfies $w_p(\cG) \le p|C|$, see the stopping condition~\ref{item:stopping-condition}, and $|E| \ge 2$ for all $E \in \cG$) covers $\cH[C]$.  Further, we will prove that the set $C$ (and the hypergraph $\cG$) depends on $I$ only through $S$, which will allow us to define the functions $f$ and $g$ by $g(I) \coloneqq S$ and $f(g(I)) \coloneqq C$.

\begin{lemma}
  \label{lem:cover}
  The hypergraph $\cG$ covers $\cH[C]$.
\end{lemma}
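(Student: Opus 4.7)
The plan is to show that every edge of $\cH[C]$ contains some edge of $\cH_J^{>1}$ by combining two observations: (i) iteratively applying the monotonicity of the up-sets $\<\cH_i\>$ propagates $\cH$ into $\<\cH_J\>$, and (ii) the definition of $C$ rules out any singleton witnesses inside $C$.

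First I would note that \Cref{lemma:upset-strictly-increasing} gives $\<\cH_i\> \subseteq \<\cH_{i+1}\>$ for every $i$, so by iteration $\cH = \cH_0 \subseteq \<\cH_0\> \subseteq \<\cH_J\>$. In particular, any edge $E \in \cH[C]$ belongs to $\<\cH_J\>$, meaning there exists some $F \in \cH_J$ with $F \subseteq E$.

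The key step is then the following dichotomy on $|F|$. If $|F| \ge 2$, then by definition $F \in \cH_J^{>1} = \cG$, so $E$ contains the desired edge of $\cG$. If on the other hand $|F| = 1$, say $F = \{v\}$, then by the defining condition $C = C_J = \{v \in V : \{v\} \notin \cH_J\}$, the vertex $v$ cannot lie in $C$. But $v \in F \subseteq E \subseteq C$, a contradiction. Hence the second case is impossible, and $E$ always contains an edge of $\cG$.

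I do not expect a significant obstacle here: the covering property is essentially built into the algorithm's construction, since each replacement $\cH_{i+1} = (\cH_i \setminus \<\cF_i\>) \cup \cF_i$ only enlarges the up-set, and the stopping/definition of $C$ is precisely tailored to exclude the trivial singleton covers. The only subtlety is the case $|F| = 1$, which is handled cleanly by the definition of $C$.
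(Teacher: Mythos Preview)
Your proposal is correct and follows essentially the same approach as the paper: both use \Cref{lemma:upset-strictly-increasing} to get $\cH \subseteq \<\cH_J\>$, then observe that any witness in $\cH_J$ for an edge of $\cH[C]$ cannot be a singleton by the definition of $C$. The only difference is a swap of the variable names $E$ and $F$ and a slightly more explicit case split.
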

\begin{proof}
  Note first that \Cref{lemma:upset-strictly-increasing} implies that $\cH = \cH_0 \subseteq \<\cH_0\> \subseteq \<\cH_J\>$.
  Let $F$ be an arbitrary edge of $\cH[C]$.  Since $\cH_J$ covers $\cH$, there must be some $E \in \cH_J$ with $E \subseteq F$.  However, as $F \subseteq C = C_J$, the set $E$ cannot be a singleton and thus $E \in \cH_J^{>1} = \cG$, as required.
\end{proof}

\begin{lemma}
  \label{lem:remain-ind-cover}
  For each $i$, we have $I \in \cI(\cH_i)$ and $S_i \subseteq I \subseteq C_i$.
\end{lemma}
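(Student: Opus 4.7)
The plan is a routine induction on $i$, where the work is confined to understanding what the two branches \ref{item:Li-in-I} and \ref{item:Li-not-in-I} do to $\cH_i$. I start with a reduction: the containment $I \subseteq C_i$ says precisely that no singleton $\{v\}$ with $v \in I$ is an edge of $\cH_i$, which is the singleton case of $I \in \cI(\cH_i)$. Hence it suffices to prove $S_i \subseteq I$ and $I \in \cI(\cH_i)$ by induction on $i$; the containment $I \subseteq C_i$ then comes along for free. The base case $i = 0$ is immediate: $\cH_0 = \cH$, $S_0 = \emptyset$, and the hypothesis is $I \in \cI(\cH)$.

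For the inductive step, the claim $S_{i+1} \subseteq I$ requires no real work. In case \ref{item:Li-in-I} we set $S_{i+1} = S_i \cup L_i$ precisely in the branch where $L_i \subseteq I$, so both pieces lie in $I$ by the inductive hypothesis and by the case condition. In case \ref{item:Li-not-in-I} we have $S_{i+1} = S_i$ and apply the inductive hypothesis directly.

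The only substantive point is preserving independence, i.e.\ showing $I \in \cI(\cH_{i+1})$. Since $\cH_{i+1} = (\cH_i \setminus \<\cF_i\>) \cup \cF_i$, the edges in $\cH_i \setminus \<\cF_i\>$ already lie in $\cH_i$ and are handled by the inductive hypothesis, so only the edges $E \in \cF_i$ need attention. In case \ref{item:Li-in-I} we have $\cF_i = \partial_{L_i}\cH_i^{s_i}$, so every such $E$ is of the form $F \setminus L_i$ for some $F \in \cH_i^{s_i}$; if $E \subseteq I$, then combining with $L_i \subseteq I$ (the case condition) would give $F = E \cup L_i \subseteq I$, contradicting the inductive hypothesis. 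In case \ref{item:Li-not-in-I} we have $\cF_i = \{L_i\}$ and $L_i \nsubseteq I$ by the very case condition, so again no edge of $\cF_i$ sits inside $I$. I do not anticipate any real obstacle: the case distinction is designed precisely so that, whichever branch is taken, replacing the edges of $\<\cF_i\> \cap \cH_i$ by the (smaller) edges of $\cF_i$ preserves independence of $I$.
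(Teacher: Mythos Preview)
Your proof is correct and follows essentially the same approach as the paper: both reduce to proving $S_i \subseteq I$ and $I \in \cI(\cH_i)$ by induction (with $I \subseteq C_i$ following from the singleton case of independence), and both handle the inductive step by observing that the only new edges lie in $\cF_i$ and checking each branch directly.
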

\begin{proof}
  It suffices to show that $S_i \subseteq I \in \cI(\cH_i)$ for all $i$;  indeed, $I \in \cI(\cH_i)$ implies that $I \subseteq C_i$, as $\{v\} \in \cH_i$ for every $v \in V \setminus C_i$.  We prove this by induction on $i$.  The induction basis follows as $S_0 = \emptyset$ and $I \in \cI(\cH) = I(\cH_0)$.  For the induction step, assume that $S_i \subseteq I \in \cI(\cH_i)$ for some $i \ge 0$.  Note first that either $S_{i+1} = S_i$ or $S_{i+1} \setminus S_i \subseteq L_i \subseteq I$ and thus $S_{i+1} \subseteq I$.  Further, since $\cH_{i+1} \subseteq \cH_i \cup \cF_i$, it suffices to show that $I \in \cI(\cF_i)$.  If $L_i \nsubseteq I$, then $\cF_i = \{L_i\}$ and thus $\cI(\cF_i)$ comprises all subsets of $V$ not containing $L_i$ (which includes $I$).  Otherwise, if $L_i \subseteq I$, we have $\cF_i \subseteq \partial_{L_i}\cH_i$ and thus $\cI(\cF_i) \supseteq \cI(\partial_{L_i}\cH_i)$;  further, $\cI(\partial_{L_i}\cH_i)$ contains all sets $I' \subseteq V$ such that $L_i \cup I' \in \cI(\cH_i)$ (which again includes $I = L_i \cup I$).
\end{proof}

\begin{lemma}
  \label{lemma:cFi-uniform}
  For every $i$, the hypergraph $\cF_i$ is $u$-uniform for some $u < s_i$.
\end{lemma}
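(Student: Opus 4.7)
The plan is a short case analysis based on which of the two branches, \ref{item:Li-in-I} or \ref{item:Li-not-in-I}, the algorithm takes in the $i$th iteration. Recall that $s_i$ and $L_i$ are defined so that $L_i$ is a nonempty subset of $V$ with $L_i \notin \cH_i$ and $w_p(\partial_{L_i}\cH_i^{s_i}) \ge 1/(4r)$; in particular $\partial_{L_i}\cH_i^{s_i} \neq \emptyset$.

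First I would handle the case $L_i \subseteq I$, where $\cF_i = \partial_{L_i} \cH_i^{s_i}$. Since every edge of $\cH_i^{s_i}$ has exactly $s_i$ elements and $L_i$ is a proper subset of each such edge (more on this below), every element of $\partial_{L_i}\cH_i^{s_i}$ has size exactly $s_i - |L_i|$. As $L_i$ is nonempty by the definition in step \ref{item:dfn-si-Li}, this common size is at most $s_i - 1 < s_i$, so $\cF_i$ is $u$-uniform with $u \coloneqq s_i - |L_i| < s_i$.

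Next I would address the case $L_i \nsubseteq I$, where $\cF_i = \{L_i\}$, so $\cF_i$ is $|L_i|$-uniform. It remains to verify that $|L_i| < s_i$, but this is essentially forced by the requirements $L_i \notin \cH_i$ and $\partial_{L_i}\cH_i^{s_i} \neq \emptyset$: the latter yields an edge $E \in \cH_i^{s_i}$ containing $L_i$, so $|L_i| \le |E| = s_i$; if equality held we would have $L_i = E \in \cH_i$, contradicting $L_i \in 2^V \setminus \cH_i$. Hence $|L_i| \le s_i - 1$, which completes this case and also justifies the parenthetical claim used above that $L_i$ is always a proper subset of each edge of $\cH_i^{s_i}$ that contains it.

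There is no real obstacle here; the lemma is a direct unpacking of the definitions in step \ref{item:dfn-si-Li}, the only subtle point being to remember that the maximality and nonemptiness conditions on $L_i$, together with $L_i \notin \cH_i$, prevent the degenerate possibility $|L_i| = s_i$ in the second case.
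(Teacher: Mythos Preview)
Your proof is correct and follows essentially the same case analysis as the paper's own argument, just with the two cases handled in the opposite order and with slightly more detail spelled out in the verification that $|L_i| < s_i$. The parenthetical remark about $L_i$ being a proper subset is not actually needed for the uniformity computation in the first case (since $L_i \subseteq E$ already gives $|E \setminus L_i| = s_i - |L_i|$), and maximality of $L_i$ plays no role here, but neither of these affects the validity of the argument.
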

\begin{proof}
  Suppose first that $\cF_i = \{L_i\}$.  In this case, $\cF_i$ is clearly $|L_i|$-uniform and $|L_i| < s_i$ as $w_p(\partial_{L_i}\cH_i^{s_i}) > 0$ and $L_i \notin \cH_i$.  Suppose now that $\cF_i = \partial_{L_i} \cH_i^{s_i}$.  In this case, $\cF_i$ is clearly $(s_i-|L_i|)$-uniform and $s_i - |L_i| < s_i$ as $L_i$ is nonempty.
\end{proof}

\begin{lemma}
  \label{lemma:wp-partial-L-bound}
  For every $i$ and all $s$ and $L \subseteq V$ with $1 \le |L| < s < r$,
  \[
    w_p(\partial_L \cH_i^s) \le \frac{1}{2r}.
  \]
\end{lemma}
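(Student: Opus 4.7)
The plan is to induct on $i$. For the base case $i=0$, the hypergraph $\cH_0 = \cH$ is $r$-uniform, so $\cH_0^s = \emptyset$ for every $s < r$, and $w_p(\partial_L \cH_0^s) = 0$. For the inductive step, assume the bound holds for $\cH_i$; I want to deduce it for $\cH_{i+1} = (\cH_i \setminus \<\cF_i\>) \cup \cF_i$. Let $u$ denote the uniformity of $\cF_i$, which by \Cref{lemma:cFi-uniform} satisfies $u < s_i$.

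The first step is to note that for every $s \ne u$, one has $\cH_{i+1}^s \subseteq \cH_i^s$, so $w_p(\partial_L \cH_{i+1}^s) \le w_p(\partial_L \cH_i^s) \le 1/(2r)$ by the inductive hypothesis. The only genuine work is for $s = u$, where $\cH_{i+1}^u \subseteq \cH_i^u \cup \cF_i$, which yields
\[
w_p(\partial_L \cH_{i+1}^u) \le w_p(\partial_L \cH_i^u) + w_p(\partial_L \cF_i).
\]
The strategy is to bound each summand by $1/(4r)$.

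For the first summand, since $u < s_i$, the minimality of $s_i$ in step~\ref{item:dfn-si-Li} forces $w_p(\partial_L \cH_i^u) < 1/(4r)$ whenever $L \notin \cH_i$ is nonempty; and if $L \in \cH_i$, then antichain-ness of $\cH_i$ (\Cref{lemma:Hi-antichain}) together with $|L| < u$ gives $\partial_L \cH_i^u = \emptyset$. For the second summand, I split into the two cases for $\cF_i$. If $\cF_i = \{L_i\}$, then $u = |L_i|$ and $\partial_L \cF_i$ is either empty or $\{L_i \setminus L\}$; since $|L| < u = |L_i|$, we get $w_p(\partial_L \cF_i) \le p^{|L_i|-|L|} \le p \le 1/(8r^2) \le 1/(4r)$. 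If $\cF_i = \partial_{L_i} \cH_i^{s_i}$, then for $L$ disjoint from $L_i$ (otherwise the link is empty) one has the identity $\partial_L \cF_i = \partial_{L \cup L_i} \cH_i^{s_i}$; the set $L \cup L_i$ strictly contains $L_i$, so by the maximality of $L_i$ either $L \cup L_i \in \cH_i$ (making the link empty by antichain-ness, noting $|L \cup L_i| < s_i$) or $w_p(\partial_{L \cup L_i} \cH_i^{s_i}) < 1/(4r)$. Adding the two bounds finishes the inductive step.

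The main obstacle is setting up the bookkeeping so that both summands can be bounded independently: one must verify that all the ``trivial'' subcases (e.g.\ $L \in \cH_i$, $L \cap L_i \ne \emptyset$, $L \cup L_i \in \cH_i$) really do collapse to an empty link via the antichain property, and one must carefully track why the maximality of $L_i$ applies to $L \cup L_i$ (namely that $L \cup L_i \supsetneq L_i$ because $L$ is nonempty and disjoint from $L_i$). Once those small case-checks are in place the inequality $1/(4r) + 1/(4r) = 1/(2r)$ delivers the conclusion directly.
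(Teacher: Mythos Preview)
Your proof is correct and follows essentially the same approach as the paper's: induction on $i$, reduction to the single uniformity $s = u$, and splitting into the two summands bounded by $1/(4r)$ each via the minimality of $s_i$ and the maximality of $L_i$ respectively. If anything, you are slightly more careful than the paper in explicitly disposing of the degenerate subcases $L \in \cH_i$ and $L \cup L_i \in \cH_i$ via the antichain property, which the paper's proof leaves implicit.
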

\begin{proof}
  We prove the lemma by induction on $i$.
  The base case $i = 0$ is trivial, as $\cH$ is $r$-uniform and thus the hypergraphs $\cH_0^2, \dotsc, \cH_0^{r-1}$ are all empty.
  For the induction step, assume that the assertion holds for some $i \ge 0$ (and all $s$ and $L$).
  Since $\cH_{i+1} \subseteq \cH_i \cup \cF_i$, we have
  \begin{equation}
    \label{eq:wp-partial-L-change}
    w_p(\partial_L \cH_{i+1}^s) \le w_p(\partial_L\cH_i^s) + w_p(\partial_L \cF_i^s)
  \end{equation}
  for all $s$ and all $L \subseteq V$.  Further, since the hypergraph $\cF_i$ is $u$-uniform for some $u < s_i$ (by \Cref{lemma:cFi-uniform}), we have $w_p(\partial_L\cF_i^s) = 0$ unless $s = u$.  We may therefore assume from now on that $s = u$, as otherwise the desired inequality (for all $L \subseteq V$ with $1 \le |L| < s$) follows from~\eqref{eq:wp-partial-L-change} and the inductive assumption.  The minimality of $s_i$ and the fact that $u < s_i$ imply that $w_p(\partial_L \cH_i^u) < 1/(4r)$.  It thus suffices to show that $w_p(\partial_L \cF_i) < 1/(4r)$ for every $L \subseteq V$ with $|L| < u$.  Fix some such set $L$.
  If $\cF_i = \partial_{L_i}\cH_i^{s_i}$, then the maximality of $L_i$ implies that $w_p(\partial_L\cF_i) = \1_{L \cap L_i = \emptyset} \cdot w_p(\partial_{L \cup L_i}\cH_i^{s_i}) < 1/(4r)$.  Otherwise, if $\cF = \{L_i\}$, we have
  \[
    w_p(\partial_L\cF_i) = \1_{L \subseteq L_i} \cdot p^{|L_i|-|L|} = \1_{L \subseteq L_i} \cdot p^{u-|L|} \le p \le 1/(4r),
  \]
  as desired.
\end{proof}

\begin{corollary}
  \label{cor:wp-partial-L-bound}
  For every $i$ and all $L \in 2^V \setminus \<\cH_i\>$,
  \[
    w_p(\partial_L\cH_i^{<r}) \le \frac12.
  \]
\end{corollary}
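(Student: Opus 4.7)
The plan is to bound $w_p(\partial_L \cH_i^{<r})$ by splitting the sum according to the size $s$ of the relevant edges of $\cH_i$ and applying \Cref{lemma:wp-partial-L-bound} in the range where it is useful.

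Concretely, I would write
\[
w_p(\partial_L \cH_i^{<r}) = \sum_{s=1}^{r-1} w_p(\partial_L \cH_i^s),
\]
and dispose of the two ranges of $s$ separately. For any $s \le |L|$, an edge $E \in \cH_i^s$ with $L \subseteq E$ would force $|L| \le s \le |L|$ and hence $L = E \in \cH_i$, contradicting the hypothesis $L \in 2^V \setminus \<\cH_i\>$; thus $\partial_L \cH_i^s = \emptyset$ in this range. For $|L| < s < r$, \Cref{lemma:wp-partial-L-bound} supplies the uniform estimate $w_p(\partial_L \cH_i^s) \le 1/(2r)$.

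Putting these together, and using that the corollary is invoked with $|L| \ge 1$ (as the lemma it relies on itself requires $|L| \ge 1$, and every set $L_i$ used in the algorithm is nonempty by construction), the surviving range of $s$ contains at most $r-2$ indices, so
\[
w_p(\partial_L \cH_i^{<r}) = \sum_{s=|L|+1}^{r-1} w_p(\partial_L \cH_i^s) \le \frac{r - 1 - |L|}{2r} \le \frac{r-2}{2r} < \frac{1}{2},
\]
as required.

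I do not foresee any real obstacle: the corollary is essentially a bookkeeping consequence of \Cref{lemma:wp-partial-L-bound}, and the only role of the hypothesis $L \in 2^V \setminus \<\cH_i\>$ is to annihilate the low-$s$ contributions that the lemma does not directly control. The quantitative loss between the per-term bound $1/(2r)$ and the corollary's bound $1/2$ is precisely absorbed by the summation over $s \in \{|L|+1,\dotsc,r-1\}$.
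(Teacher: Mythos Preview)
Your argument is correct and is precisely the intended derivation: the paper states this as an unproved corollary of \Cref{lemma:wp-partial-L-bound}, and summing the per-$s$ bounds over $|L| < s < r$ is exactly how it follows. Your observation that the corollary tacitly requires $|L| \ge 1$ (which is harmless, since it is only ever applied to the nonempty sets $L \in \cF_i$) is a careful point that the paper leaves implicit.
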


\begin{lemma}
  \label{lem:weight-increase}
  For every $i \in \{0, \dotsc, J-1\}$,
  \[
    w_p(\cH_{i+1}^{<r}) \geq w_p(\cH_i^{<r}) + \frac{1}{8r} \cdot \1_{L_i \subseteq I}.
  \]
\end{lemma}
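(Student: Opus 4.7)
The plan is to track the change in $w_p$ induced by passing from $\cH_i$ to $\cH_{i+1} = (\cH_i\setminus\<\cF_i\>)\cup\cF_i$. Recall from Lemma~\ref{lemma:upset-strictly-increasing} that $\cF_i\cap\<\cH_i\>=\emptyset$, so the union defining $\cH_{i+1}$ is disjoint; and from Lemma~\ref{lemma:cFi-uniform} that $\cF_i$ is $u$-uniform for some $u<s_i\leq r$, so $\cF_i\subseteq\cH_{i+1}^{<r}$ and $w_p(\cF_i^{<r})=w_p(\cF_i)$. Combining these, I would write
\[
w_p(\cH_{i+1}^{<r}) \;=\; w_p((\cH_i\setminus\<\cF_i\>)^{<r}) + w_p(\cF_i) \;\ge\; w_p(\cH_i^{<r}) - w_p((\cH_i\cap\<\cF_i\>)^{<r}) + w_p(\cF_i).
\]

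The key step is to bound the ``subtracted'' weight $w_p((\cH_i\cap\<\cF_i\>)^{<r})$ by at most $\tfrac12 w_p(\cF_i)$. For each $E$ in $\cH_i\cap\<\cF_i\>$, there exists some $F\in\cF_i$ with $F\subseteq E$, so
\[
w_p\bigl((\cH_i\cap\<\cF_i\>)^{<r}\bigr) \;\le\; \sum_{F\in\cF_i} w_p\bigl(\{E\in\cH_i^{<r}:F\subseteq E\}\bigr) \;=\; \sum_{F\in\cF_i} p^{|F|}\cdot w_p(\partial_F\cH_i^{<r}).
\]
Since each $F\in\cF_i$ is nonempty and $F\notin\<\cH_i\>$ (by Lemma~\ref{lemma:upset-strictly-increasing}), Corollary~\ref{cor:wp-partial-L-bound} yields $w_p(\partial_F\cH_i^{<r})\le 1/2$, so the right-hand side is at most $\tfrac12 w_p(\cF_i)$. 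Plugging this back gives
\[
w_p(\cH_{i+1}^{<r}) - w_p(\cH_i^{<r}) \;\ge\; \tfrac12 w_p(\cF_i).
\]

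To finish, I split on whether $L_i\subseteq I$. If $L_i\subseteq I$, then $\cF_i=\partial_{L_i}\cH_i^{s_i}$, so $w_p(\cF_i)=w_p(\partial_{L_i}\cH_i^{s_i})\ge 1/(4r)$ by the defining property of $L_i$ in step~\ref{item:dfn-si-Li} of the algorithm; hence the increase is at least $1/(8r)$, as required. If $L_i\nsubseteq I$, the bound $\tfrac12 w_p(\cF_i)\ge 0$ already matches the right-hand side $\tfrac{1}{8r}\cdot\1_{L_i\subseteq I}=0$.

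The only delicate point is the bookkeeping in the displayed inequality: one must note that $\cH_i$ and $\cF_i$ are disjoint (so no weight is double-counted when computing $\cH_{i+1}$), that every edge of $\cF_i$ has size strictly less than $r$ (so all of $w_p(\cF_i)$ is captured by the $<r$ truncation), and that one may safely ``overcount'' the removed edges by a union bound over $\cF_i$, at which point Corollary~\ref{cor:wp-partial-L-bound} applies verbatim because each $F\in\cF_i$ is a nonempty element of $2^V\setminus\<\cH_i\>$. No further technicalities should arise.
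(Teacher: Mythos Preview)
Your proposal is correct and follows essentially the same approach as the paper's own proof: both arguments use that $\cF_i$ is $u$-uniform with $u<r$ and disjoint from $\<\cH_i\>$, bound the subtracted weight via $\sum_{F\in\cF_i}p^{|F|}\cdot w_p(\partial_F\cH_i^{<r})$, apply Corollary~\ref{cor:wp-partial-L-bound} to get the factor $1/2$, and finish using $w_p(\cF_i)\ge 1/(4r)$ in the case $L_i\subseteq I$. Your write-up is slightly more explicit about the disjointness bookkeeping, but there is no substantive difference.
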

\begin{proof}
  Since the hypergraph $\cF_i$ is $u$-uniform for some $u < s_i \le r$ (by \Cref{lemma:cFi-uniform}), it follows from the definition of $\cH_{i+1}$ that
  \[
    w_p( \cH_{i+1}^{<r} ) - w_p( \cH_i^{<r} ) \ge \sum_{L \in \cF_i} p^{|L|} \cdot \bigl(1 - w_p(\partial_{L}\cH_i^{<r})\bigr).
  \]
  Further, since $\cF_i \cap \<\cH_i\> = \emptyset$ (by \Cref{lemma:upset-strictly-increasing}), \Cref{cor:wp-partial-L-bound} implies that $w_p(\partial_L\cH_i^{<r}) \le 1/2$ for each $L \in \cF_i$ and thus $w_p( \cH_{i+1}^{<r} ) - w_p( \cH_i^{<r} ) \ge w_p(\cF_i)/2$.
  The assertion of the lemma follows as $w_p(\cF_i) \ge 0$ and $w_p(\cF_i) = w_p(\partial_{L_i}\cH_i^{s_i}) \ge 1/(4r)$ when $L_i \subseteq I$.
\end{proof}

\begin{lemma}
  \label{lem:small-fingerprint-cover}
  We have $|S|\leq 8r^2 p|V|$.
\end{lemma}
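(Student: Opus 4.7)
The plan is to bound $|S|=|S_J|$ by combining the bookkeeping inequality from Lemma~\ref{lem:weight-increase} (which tracks how $w_p(\cH_i^{<r})$ grows every time $L_i\subseteq I$) with the stopping condition $w_p(\cH_J^{>1})\le p|C_J|$.

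First I would observe that the only steps that can enlarge $S_i$ are those in which $L_i\subseteq I$ (step~\ref{item:Li-in-I}), in which case $S_{i+1}\setminus S_i\subseteq L_i$. Since Lemma~\ref{lemma:cFi-uniform} gives $|L_i|<s_i\le r$, this yields
\[
|S_J|\le\sum_{i=0}^{J-1}|L_i|\cdot\1_{L_i\subseteq I}\le r\cdot N,\qquad N\coloneqq\bigl|\{i<J:L_i\subseteq I\}\bigr|.
\]

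Next I would telescope Lemma~\ref{lem:weight-increase}. Since $\cH_0=\cH$ is $r$-uniform, $\cH_0^{<r}=\emptyset$ and therefore
\[
w_p(\cH_J^{<r})\ge w_p(\cH_0^{<r})+\frac{N}{8r}=\frac{N}{8r},
\]
so it remains to show $w_p(\cH_J^{<r})\le p|V|$.

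The key final step is to control $w_p(\cH_J^{<r})$ using the stopping condition, which bounds $w_p(\cH_J^{>1})$, not $w_p(\cH_J^{<r})$. The main (minor) obstacle is that these two differ precisely in singletons and $r$-sets, so I split $\cH_J^{<r}$ by edge size. The singletons in $\cH_J$ are exactly the $\{v\}$ with $v\in V\setminus C_J$ (by definition of $C_J$), giving $w_p(\cH_J^1)=p|V\setminus C_J|$. The edges of size $2,\dots,r-1$ lie inside $\cH_J^{>1}$, so by the stopping condition~\ref{item:stopping-condition},
\[
w_p(\cH_J^{<r}\cap\cH_J^{>1})\le w_p(\cH_J^{>1})\le p|C_J|.
\]
Adding these two contributions gives $w_p(\cH_J^{<r})\le p|V|$, hence $N\le 8rp|V|$ and
\[
|S|=|S_J|\le rN\le 8r^2p|V|,
\]
as required.
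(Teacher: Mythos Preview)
Your proof is correct and follows essentially the same approach as the paper: telescope Lemma~\ref{lem:weight-increase} to get $w_p(\cH_J^{<r})\ge N/(8r)$, then bound $w_p(\cH_J^{<r})$ by $p|V|$ via the decomposition into singletons (contributing $p|V\setminus C_J|$) and larger edges (bounded by the stopping condition), and finally use $|L_i|\le r$. The only cosmetic difference is that the paper bounds $w_p(\cH_J)$ rather than $w_p(\cH_J^{<r})$, which is the same computation since $\cH_J^{<r}\subseteq\cH_J=\cH_J^1\cup\cG$; also, your appeal to Lemma~\ref{lemma:cFi-uniform} for $|L_i|<s_i$ is slightly indirect---that bound is immediate from the definition of $L_i$ in step~\ref{item:dfn-si-Li} (since $\partial_{L_i}\cH_i^{s_i}\neq\emptyset$ and $L_i\notin\cH_i$ force $L_i$ to be a proper subset of some $s_i$-edge).
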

\begin{proof}
  Let $m$ denote the number of rounds of the algorithm in which $L_i \subseteq I$.
  Since $\cH_0$ is $r$-uniform, it follows from \Cref{lem:weight-increase} that
  \[
    w_p(\cH_J^{<r}) = w_p(\cH_J^{<r}) - w_p(\cH_0^{<r}) = \sum_{i=0}^{J-1} \bigl(w_p(\cH_{i+1}^{<r}) - w_p(\cH_i^{<r})\bigr) \ge \frac{m}{8r}.
  \]
  On the other hand, since $\cH_J = \cH_J^1 \cup \cG$ and $w_p(\cG) \le p|C_J|$, we have
  \[
    w_p(\cH_J) = w_p(\cH_J^1) + w_p(\cG) = |V \setminus C_J| \cdot p + w_p(\cG) \le p|V|.
  \]
  We conclude that $m \le 8r|V|$. Since $|S_{i+1}| - |S_i| \le r$ when $L_i \subseteq I$ and $|S_{i+1}| = |S_i|$ otherwise, the assertion of the lemma follows.
\end{proof}

\begin{lemma}
  \label{lem:fing-def}
  Suppose that the algorithm with inputs $I, I' \in \cI(\cH)$ outputs $(S,C,\cG)$ and $(S',C',\cG')$, respectively.  If $S=S'$, then $(C,\cG) = (C', \cG')$.
\end{lemma}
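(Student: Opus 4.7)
The plan is to argue, by induction on $i$, that if the algorithm produces fingerprints $S$ and $S'$ with $S = S'$ on the two inputs $I$ and $I'$, then its full trajectory is identical on both runs. The inductive statement to carry is: for every $i \le \min(J, J')$, the quantities $\cH_i, C_i, S_i$ all agree in the two runs, and (when defined) so do $s_i, L_i, \cF_i$. The base case $i = 0$ is immediate from $\cH_0 = \cH$ and $S_0 = \emptyset$ in both runs.

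For the inductive step, observe that the stopping condition in step~\ref{item:stopping-condition} depends only on $\cH_i$ and $C_i$, so under the inductive hypothesis either both runs halt at step $i$ or both continue. In the former case, $J = J'$ and $(C, \cG) = (C_J, \cH_J^{>1})$ agrees with $(C', \cG')$, which is the desired conclusion. In the latter case, the quantities $s_i$ and $L_i$ are determined by $\cH_i = \cH'_i$ alone, so they coincide in the two runs. The only remaining source of potential divergence is the branching between \ref{item:Li-in-I} and \ref{item:Li-not-in-I}, which is controlled by whether $L_i \subseteq I$ or $L_i \subseteq I'$.

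The crux of the argument is that this branching is itself forced by $S$. Suppose, towards a contradiction, that the two runs take different branches at step $i$; without loss of generality, $L_i \subseteq I$ while $L_i \nsubseteq I'$. Then, in the first run, $L_i \subseteq S_{i+1} \subseteq S$, so every vertex of $L_i$ lies in $S$. On the other hand, \Cref{lem:remain-ind-cover} applied to the second run yields $S' \subseteq I'$, so picking any $v \in L_i \setminus I'$ gives $v \in S \setminus S'$, contradicting $S = S'$. Consequently the branches agree, which implies $\cF_i = \cF'_i$, $S_{i+1} = S'_{i+1}$, and $\cH_{i+1} = \cH'_{i+1}$, closing the induction.

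There is no substantive obstacle here; once one recognises the equivalence $L_i \subseteq S \iff L_i \subseteq I$ (whose nontrivial direction is exactly \Cref{lem:remain-ind-cover}), the entire execution of the algorithm is visibly a function of the intermediate state $(\cH_i, S_i)$, and this state is in turn determined by the final fingerprint $S$ via the trajectory forced above.
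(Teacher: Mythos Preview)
Your proof is correct and follows essentially the same approach as the paper's: both argue (yours by explicit induction, the paper by considering the earliest round of divergence) that the two runs stay synchronised because the branching at each step is forced by the final fingerprint $S$ via \Cref{lem:remain-ind-cover}.
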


\begin{proof}
  Observe first that the execution of the algorithm on a given input (an independent set of~$\cH$) depends solely on how the set $S_{i+1}$ and the hypergraph $\cF_{i+1}$ are defined in each round $i$ (via step \ref{item:Li-in-I} or step \ref{item:Li-not-in-I}).  Thus, if the outputs of the algorithm applied to $I$ and $I'$ are different, then there must be some $i$ for which $L_i' \subseteq I'$ but $L_i \nsubseteq I$, or vice-versa.  Let $i$ be the \emph{earliest} such round and note that $\cH_i' = \cH_i$ and thus also $L'_i = L_i$.  We may clearly assume that $L_i = L'_i \subseteq I'$ and $L_i \nsubseteq I$.  However, \Cref{lem:remain-ind-cover} implies that $L_i = L'_i \subseteq S'_{i+1} \subseteq S' = S \subseteq I$, a~contradiction. 
\end{proof}

We finish our discussion with a formal proof of \Cref{HCL:cover}.

\begin{proof}[Proof of \Cref{HCL:cover}]
  For each independent set $I \in \cI(\cH)$, set $g(I) \coloneqq S$ and $f(S) \coloneqq C$, where $(S,C,\cG)$ is the output of the algorithm with input $I$.  (Recall that \Cref{lemma:Hi-antichain,lemma:upset-strictly-increasing} imply that the algorithm terminates on every input.)  Further, set $\cS \coloneqq \{ g(I) : I \in \cI(\cH) \}$.  \Cref{lem:fing-def} guarantees that the function $f$ is well-defined.  By \Cref{lem:remain-ind-cover}, we have $g(I) \subseteq I \subseteq f(g(I))$ for every $I \in \cI(\cH)$ whereas \Cref{lem:small-fingerprint-cover} shows that $|S|\leq 8r^2 pn$ for every $S \in \cS$.  Finally, writing $C = f(g(I))$, \Cref{lem:cover} guarantees that the hypergraph $\cG$ is a cover for $\cH[C]$ with $w_p(\cG) \le p|C|$; the property that $|E| \geq 2$ for all $E \in \cG$ is straightforward from the definition of $\cG$.  This concludes the proof of the theorem.
\end{proof}

\section{Proof of~\Cref{HCL:hardcore}}
\label{sec:proof-HCL-hardcore}

\subsection{The algorithm}

Assume that $\cH$ is a hypergraph on a finite set $V$ and fix some $p \in (0,1]$.
The following algorithm takes as input an independent set $I \in \cI(\cH)$ and returns sets $S, C \subseteq V$.

\begin{enumerate}[label=(\arabic*)]
\item
  Let $\cH_0 \coloneqq \cH$ and $S_0 \coloneqq \emptyset$.
\item
  Do the following for $i = 0, 1, \dotsc$:
  \begin{enumerate}[label=(\alph*),ref=(\arabic{enumi}\alph*)]
  \item
    \label{item:stopping-condition-hardcore}
    If there exists a vertex $v \in V \setminus S_i$ such that $\{v\} \notin \cH_i$ and
    \[
      \Pr\bigl(v \in V_p \mid V_p \in \cI(\cH_i)\bigr) < (1-\delta)p,
    \]
    let $v_i$ be some such vertex;  otherwise, set $J \coloneqq i$ and \texttt{STOP}.
  \item
    If $v_i \in I$, set $S_{i+1} \coloneqq S_i \cup \{v_i\}$ and $\cH_{i+1} \coloneqq \cH_i \cup \partial_{v_i}\cH_i$.
  \item
    Otherwise, if $v_i \notin I$, set $S_{i+1} \coloneqq S_i$ and $\cH_{i+1} \coloneqq \cH_i \cup \{\{v_i\}\}$.
  \end{enumerate}
\item
  Return $S \coloneqq S_J$ and $C \coloneqq \left\{v \in V : \{v\} \notin \cH_J \right\}$.
\end{enumerate}

\subsection{Properties}
\label{sec:properties-hardcore}

We now establish some key properties of the algorithm.
We first claim that the algorithm always terminates with $J \le |V|$.
Indeed, for each $i$, the vertex $v_i$ is either added to $S_i$ or $\{v_i\}$ becomes an edge of $\cH_i$; in both cases, $v_i$ is never considered again in step \ref{item:stopping-condition-hardcore}.
Further, we claim that the set $C$ can be reconstructed with the knowledge of $S$ only.
Indeed, for all $i$, the hypergraph $\cH_i$ depends on $S_i$ only.
Therefore, running the algorithm with input $S$ instead of $I$ results in the same sets $S$ and~$C$.
This allows us to define the functions $f$ and $g$ by $g(I) \coloneqq S$ and $f(g(I)) \coloneqq C$.
It remains to show that, for every input $I \in \cI(\cH)$, we have $S \subseteq I \subseteq C$, the set $S$ has at most $p|V|/\delta$ elements, and that $S \cup C_p \in \cI(\cH)$ with probability at least $(1-p)^{\delta|C|}$.

\begin{lemma}
  \label{lem:remain-ind-hardcore}
  For each $i$, we have $S_i \subseteq I \in \cI(\cH_i)$ and $I' \in \cI(\cH_i) \iff S_i \cup I' \in \cI(\cH_i)$.
\end{lemma}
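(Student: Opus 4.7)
The plan is to prove both assertions by a joint induction on $i$. The base case $i=0$ is immediate: $S_0 = \emptyset$ is contained in $I$, the membership $I \in \cI(\cH_0)$ is the hypothesis $I \in \cI(\cH)$, and the equivalence in the second claim is tautological.

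For the inductive step I would split according to which branch of the algorithm is taken. The containment $S_i \subseteq I \in \cI(\cH_i)$ is routine in both cases. In the branch $v_i \in I$, the update $\cH_{i+1} = \cH_i \cup \partial_{v_i}\cH_i$ cannot create an edge contained in $I$, because any $E \in \partial_{v_i}\cH_i$ with $E \subseteq I$ would give $E \cup \{v_i\} \in \cH_i$ with $E \cup \{v_i\} \subseteq I$, contradicting the inductive hypothesis. In the branch $v_i \notin I$, the only new edge is $\{v_i\}$, which is not in $I$. The containment $S_{i+1} \subseteq I$ is then immediate.

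For the equivalence, the direction $S_{i+1} \cup I' \in \cI(\cH_{i+1}) \implies I' \in \cI(\cH_{i+1})$ is free from downward closure, so I only need the converse. The branch $v_i \notin I$ is easy: membership $I' \in \cI(\cH_{i+1})$ forces $v_i \notin I'$, and since $v_i \notin S_i$ by the selection rule in step~\ref{item:stopping-condition-hardcore}, the inductive hypothesis gives $S_i \cup I' \in \cI(\cH_i)$; the extra edge $\{v_i\}$ is then still not a subset of $S_i \cup I'$.

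The main obstacle is the branch $v_i \in I$, where both $\cH$ and $S$ change nontrivially. Here I would isolate the identity
\[
  I' \in \cI(\cH_i \cup \partial_{v_i}\cH_i) \iff \{v_i\} \cup I' \in \cI(\cH_i),
\]
whose verification is a short case analysis splitting edges of $\cH_i$ contained in $\{v_i\} \cup I'$ according to whether they contain $v_i$. Using that $v_i \in S_{i+1}$, this identity translates the target equivalence $I' \in \cI(\cH_{i+1}) \implies S_{i+1} \cup I' \in \cI(\cH_{i+1})$ into a statement purely about $\cH_i$, namely $\{v_i\} \cup I' \in \cI(\cH_i) \implies S_i \cup \{v_i\} \cup I' \in \cI(\cH_i)$, which is exactly the inductive hypothesis applied to the set $\{v_i\} \cup I'$. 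This reduction is the only step that requires care; everything else is bookkeeping.
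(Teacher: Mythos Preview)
Your proposal is correct and follows essentially the same approach as the paper's proof: induction on $i$, case split on whether $v_i \in I$, and in the nontrivial branch the key identity $\cI(\cH_{i+1}) = \{I' : \{v_i\} \cup I' \in \cI(\cH_i)\}$ reduces the equivalence at step $i+1$ to the inductive hypothesis applied to $\{v_i\} \cup I'$. The paper's write-up is more compressed (it presents the $v_i \in I$ case as a single chain of equivalences), but the logical content is identical.
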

\begin{proof}
  We prove the assertion of the lemma by induction on $i$.  The induction basis follows as $S_0 = \emptyset$ and $\cH = \cH_0$.  For the induction step, assume that the assertion holds for some $i \ge 0$.  Suppose first that $v_i \in I$.  In this case, $S_{i+1} = S_i \cup \{v_i\} \subseteq I$ and $\cH_{i+1} = \cH_i \cup \partial_{v_i}\cH_i$, which means that $\cI(\cH_{i+1}) = \{I' \subseteq V : \{v_i\} \cup I' \in \cI(\cH_i)\} \ni I$.  Further
  \[
    I' \in \cI(\cH_{i+1}) \iff \{v_i\} \cup I' \in \cI(\cH_i) \iff S_i \cup \{v_i\} \cup I' \in \cI(\cH_i) \iff S_{i+1} \cup I' \in \cI(\cH_{i+1}).
  \]
  Suppose now that $v_i \notin I$.  In this case, $S_{i+1} = S_i \subseteq I$ and $\cH_{i+1} = \cH_i \cup \{\{v_i\}\}$, which means that $\cI(\cH_{i+1}) = \{I' \in \cI(\cH_i) : v_i \notin I'\} \ni I$.  Further, $I' \in \cI(\cH_{i+1}) \iff S_{i+1} \cup I' \in \cI(\cH_{i+1})$, as $S_{i+1} = S_i \not\ni v_i$. 
\end{proof}

The above lemma clearly implies that $S = S_J \subseteq I$.  Further, $I \in \cI(\cH_J) \subseteq 2^C$, as no vertex $v$ satisfying $\{v\} \in \cH_J$ can belong to an independent set of $\cH_J$.  This establishes $S \subseteq I \subseteq C$.

\begin{lemma}
  \label{lem:small-fingerprint-hardcore}
  We have $\delta|S| \le p|V|$.
\end{lemma}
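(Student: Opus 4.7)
The plan is to bracket $-\log P_J$, where $P_i \coloneqq \Pr(V_p \in \cI(\cH_i))$ is the partition function of the hard-core model associated with $\cH_i$, between matching upper and lower bounds. First I would compute the one-step evolution of $P_i$. In an add round (i.e., when $v_i \in I$), a check on the definitions gives
\[
  \cI(\cH_{i+1}) = \{I' \subseteq V : I' \cup \{v_i\} \in \cI(\cH_i)\},
\]
because $\partial_{v_i}\cH_i$ encodes exactly the constraints needed so that $I' \cup \{v_i\}$ avoids every edge of $\cH_i$ containing $v_i$. Consequently, writing $q_i(v_i) \coloneqq \Pr(v_i \in V_p \mid V_p \in \cI(\cH_i))$,
\[
  P_{i+1} = \Pr\bigl(V_p \cup \{v_i\} \in \cI(\cH_i)\bigr) = \frac{\Pr(V_p \in \cI(\cH_i),\, v_i \in V_p)}{p} = \frac{q_i(v_i)\, P_i}{p}.
\]
In a not-add round $P_{i+1} = (1-q_i(v_i))P_i$ directly. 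In both cases $P_{i+1} \le P_i$, and in add rounds the stopping criterion $q_i(v_i) < (1-\delta)p$ gives $P_{i+1} < (1-\delta)P_i$. Iterating yields $P_J \le (1-\delta)^{|S|}$, whence
\[
  -\log P_J \ge |S|\cdot\bigl(-\log(1-\delta)\bigr).
\]

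For the matching upper bound, I would note that every edge ever added by the algorithm is nonempty (the stopping criterion forces $\{v_i\} \notin \cH_i$, so $\partial_{v_i}\cH_i$ contains no empty edge, and $\{v_i\}$ is trivially nonempty), hence $\emptyset \in \cI(\cH_J)$ and therefore $P_J \ge \Pr(V_p = \emptyset) = (1-p)^{|V|}$. That is,
\[
  -\log P_J \le |V|\cdot\bigl(-\log(1-p)\bigr).
\]

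Combining gives $|S|\bigl(-\log(1-\delta)\bigr) \le |V|\bigl(-\log(1-p)\bigr)$. Since $x \mapsto -\log(1-x)/x = \sum_{k \ge 1} x^{k-1}/k$ is increasing on $(0,1)$, the hypothesis $p \le \delta$ yields $-\log(1-p)/p \le -\log(1-\delta)/\delta$, so
\[
  |S| \le |V| \cdot \frac{-\log(1-p)}{-\log(1-\delta)} \le \frac{p|V|}{\delta},
\]
which is exactly $\delta|S| \le p|V|$. The only delicate point is the identification of $\cI(\cH_{i+1})$ in add rounds, which unlocks the clean multiplicative identity $P_{i+1} = q_i(v_i)P_i/p$; everything else is a one-line manipulation. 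Notably, this route does not require the auxiliary \Cref{prop:key}, since the trivial lower bound $P_J \ge (1-p)^{|V|}$ already gives enough.
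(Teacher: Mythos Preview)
Your argument is correct and essentially identical to the paper's own proof: both track the multiplicative drop in $P_i = \Pr(V_p \in \cI(\cH_i))$, identify $\cI(\cH_{i+1}) = \{I' : I' \cup \{v_i\} \in \cI(\cH_i)\}$ in add rounds to obtain $P_{i+1} < (1-\delta)P_i$, use the trivial lower bound $P_J \ge (1-p)^{|V|}$, and conclude via the monotonicity of $x \mapsto -\log(1-x)/x$ (equivalently, of $x \mapsto (1-x)^{1/x}$) together with $p \le \delta$. Your closing remark about \Cref{prop:key} is a slight misdirection: the paper does not invoke it in this lemma either—it is used only later, in the proof of \Cref{lemma:tight-container-hardcore}.
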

\begin{proof}
  We claim that the sequence of probabilities $P_i \coloneqq \Pr\bigl(V_p \in \cI(\cH_i)\bigr)$ satisfies
  \[
    P_{i+1} \le (1-\delta)^{\1_{v_i \in I}} \cdot P_i,
  \]
  for all $i \in \{0, \dotsc, J-1\}$, which gives
  \begin{equation}
    \label{eq:PJ-P0-ratio}
    (1-p)^{|V|} = \Pr(V_p = \emptyset) \le P_J \le (1-\delta)^{|\{i : v_i \in I\}|} \cdot P_0 = (1-\delta)^{|S|} \cdot P_0 \le (1-\delta)^{|S|}.
  \end{equation}
  Indeed, if $v_i \notin I$, then the inequality $P_{i+1} \le P_i$ follows as $\cH_i \subseteq \cH_{i+1}$, and thus $\cI(\cH_{i+1}) \subseteq \cI(\cH_i)$.
  On the other hand, if $v_i \in I$, then $\cH_{i+1} = \cH_i \cup \partial_{v_i} \cH_i$, and thus $\cI(\cH_{i+1}) = \{I' \subseteq V : I' \cup \{v_i\} \in \cI(\cH_i)\}$.  Consequently, by the choice of $v_i$,
  \[
    p \cdot P_{i+1} = \Pr\bigl(v_i \in V_p \in \cI(\cH_i)\bigr) = \Pr(v_i \in V_p \mid V_p \in \cI(\cH_i)) \cdot P_i < (1-\delta)p \cdot P_i,
  \]
  which gives the desired inequality.
  We claim that \eqref{eq:PJ-P0-ratio} implies the desired inequality $\delta|S| \le p|V|$.  Indeed, if $\delta|S| > p|V|$, then $(1-\delta)^{|S|} < (1-\delta)^{p|V|/\delta} \le (1-p)^{|V|}$, where the second inequality follows as $p \le \delta$ and the map $x \mapsto (1-x)^{1/x}$ is decreasing on $(0,1)$.
\end{proof}

\begin{lemma}
  \label{lemma:tight-container-hardcore}
  We have $\Pr\bigl(S \cup C_p \in \cI(\cH)\bigr) \ge (1-p)^{\delta |C \setminus S|}$.
\end{lemma}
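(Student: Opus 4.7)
The plan is to apply \Cref{prop:key} to the decreasing family $\cI(\cH_J) \subseteq 2^C$, noting that the inclusion holds because $\{v\} \in \cH_J$ for every $v \in V \setminus C$, so no independent set of $\cH_J$ meets $V \setminus C$. First I would reduce the target statement. Since the algorithm only adds edges to $\cH_0 = \cH$, we have $\cH \subseteq \cH_J$ and therefore $\cI(\cH_J) \subseteq \cI(\cH)$. Moreover, \Cref{lem:remain-ind-hardcore} applied with $i = J$ gives $I' \in \cI(\cH_J) \iff S \cup I' \in \cI(\cH_J)$ for every $I' \subseteq V$. Combining these facts, whenever $C_p \in \cI(\cH_J)$ we also have $S \cup C_p \in \cI(\cH_J) \subseteq \cI(\cH)$, so it suffices to show
\[
  \Pr\bigl(C_p \in \cI(\cH_J)\bigr) \ge (1-p)^{\delta|C\setminus S|}.
\]

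Next, I would lower-bound the conditional mean $\mu \coloneqq \Ex\bigl[|C_p| \mid C_p \in \cI(\cH_J)\bigr]$ by partitioning $C = S \sqcup (C\setminus S)$. The equivalence above shows that the event $\{C_p \in \cI(\cH_J)\}$ depends on $C_p$ only through $C_p \setminus S$; hence $C_p \cap S$ is independent of this event and distributed as a $p$-random subset of $S$, giving $\Ex\bigl[|C_p \cap S| \mid C_p \in \cI(\cH_J)\bigr] = p|S|$. For the remaining piece, since $\cI(\cH_J) \subseteq 2^C$, the conditional distribution of $V_p \cap C$ given $V_p \in \cI(\cH_J)$ coincides with the conditional distribution of $C_p$ given $C_p \in \cI(\cH_J)$. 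Hence for every $v \in C \setminus S$, the negation of the stopping condition in step~\ref{item:stopping-condition-hardcore} yields
\[
  \Pr\bigl(v \in C_p \mid C_p \in \cI(\cH_J)\bigr) = \Pr\bigl(v \in V_p \mid V_p \in \cI(\cH_J)\bigr) \ge (1-\delta)p.
\]
Summing over $v \in C \setminus S$ and adding the two estimates gives $\mu \ge p|S| + (1-\delta)p|C\setminus S|$, that is, $|C| - \mu/p \le \delta|C\setminus S|$.

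Finally, I would plug this bound into \Cref{prop:key}. Since $\log(1-p) < 0$, multiplying the inequality $|C| - \mu/p \le \delta|C\setminus S|$ by $\log(1-p)$ reverses its direction, so
\[
  \log \Pr\bigl(C_p \in \cI(\cH_J)\bigr) \ge (|C| - \mu/p)\log(1-p) \ge \delta|C\setminus S|\log(1-p),
\]
which exponentiates to the required inequality. No step appears to be a real obstacle: the one conceptual point worth highlighting is the decoupling of $C_p \cap S$ from the event $\{C_p \in \cI(\cH_J)\}$ afforded by \Cref{lem:remain-ind-hardcore}, while translating between the conditional distributions of $V_p$ and $C_p$ and tracking the sign flip coming from $\log(1-p) < 0$ are routine bookkeeping.
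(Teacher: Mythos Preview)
Your proof is correct and follows essentially the same route as the paper: reduce from $\cH$ to $\cH_J$, then apply \Cref{prop:key}, using the stopping condition to lower-bound the conditional occupancy probabilities via \Cref{lem:remain-ind-hardcore} and the identification of the $V_p$- and $C_p$-conditionals. The only cosmetic difference is that the paper applies \Cref{prop:key} directly to $C' = C \setminus S$ (with the family $\{I' \subseteq C' : S \cup I' \in \cI(\cH_J)\}$), whereas you apply it to all of $C$ and then recover the extra $p|S|$ in the conditional mean through your decoupling observation; both reach the same bound $|C|-\mu/p \le \delta|C\setminus S|$.
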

\begin{proof}
  Define $C' \coloneqq C \setminus S$.  Since clearly $\cH = \cH_0 \subseteq \cH_1 \subseteq \dotsb \subseteq \cH_J$, we have
  \[
    \Pr(S \cup C_p \in \cI(\cH)) \ge \Pr(S \cup C_p \in \cI(\cH_J)) = \Pr(S \cup C_p' \in \cI(\cH_J)).
  \]
  In view of \Cref{prop:key}, the assertion of the lemma will follow if we prove that
  \begin{equation}
    \label{eq:ExCp'-lower}
    \Ex\bigl[|C_p'| \mid S \cup C_p' \in \cI(\cH_J)\bigr] \ge (1-\delta)p|C'|.
  \end{equation}
  To this end, fix an arbitrary $v \in C'$.  Since $\cI(\cH_J) \subseteq 2^C$, as we have already observed above, and since $S \cup V_p \in \cI(\cH_J) \iff V_p \in \cI(\cH_J)$, by \Cref{lem:remain-ind-hardcore}, we have
  \[
    \Pr\bigl(v \in C_p' \mid S \cup C_p' \in \cI(\cH_J)\bigr) = \Pr\bigl(v \in V_p \mid S \cup V_p \in \cI(\cH_J)\bigr) = \Pr\bigl(v \in V_p \mid V_p \in \cI(\cH_J)\bigr).
  \]
  Finally, since $v \in C' = C \setminus S$, the stopping condition~\ref{item:stopping-condition-hardcore} implies that the above probability is at least $(1-\delta)p$.  Summing this inequality over all $v \in C'$ yields \eqref{eq:ExCp'-lower}.
\end{proof}

We conclude our discussion with a short derivation of \Cref{HCL:hardcore}.

\begin{proof}[Proof of \Cref{HCL:hardcore}]
  For each independent set $I \in \cI(\cH)$, set $g(I) \coloneqq S$ and $f(S) \coloneqq C$, where $(S,C)$ is the output of the algorithm with input~$I$.  Further, set $\cS \coloneqq \{ g(I) : I \in \cI(\cH) \}$.  Since the algorithm will perform the exact same operations when we replace the input set~$I$ with~$S$, the function $f$ is well-defined.
  Now, \Cref{lem:remain-ind-hardcore} implies assertion~\ref{item:contained-hardcore}, \Cref{lem:small-fingerprint-hardcore} proves assertion~\ref{item:small-fingerprint-hardcore}, and \Cref{lemma:tight-container-hardcore} establishes assertion~\ref{item:subexponential-probability}.
\end{proof}

\section{Derivations of the standard HCLs}
\label{sec:deducing-normal-containers}

\begin{proof}[Derivation of \Cref{HCL:efficient} from \Cref{HCL:cover}]
  We apply \Cref{HCL:cover} with $p \coloneqq \tau/(8r^2)$ to obtain a family $\cS$, functions $f$ and $g$, and a hypergraph $\cG_S$ for every $S \in \cS$.
  Assertions \ref{item:contained-efficient} and \ref{item:small-fingerprint-efficient} follow immediately from the respective assertions in \Cref{HCL:cover}, so we only need to argue that~\ref{item:small-cover-efficient} holds as well.

  To this end, fix some $S \in \cS$, let $C \coloneqq f(S)$, and let $\cG \coloneqq \cG_S$ be the hypergraph covering $\cH[C]$ that satisfies $w_p(\cG) \le p|C| \le p|V|$ and whose all edges have size at least two.
  By the assumption of the theorem, for every $E \in \cG$,
  \[
    \deg_{\cH}E \le K \cdot \left(\frac{p}{4K}\right)^{|E| - 1} \cdot \frac{e(\cH)}{|V|} < \frac{p^{|E|-1}}{2} \cdot \frac{e(\cH)}{|V|},
  \]
  where the second inequality follows as $|E| \ge 2$ and $K \ge r \ge 1$.
  The fact that $\cG$ covers $\cH[C]$ implies that
  \[
    e(\cH[C]) \le \sum_{E \in \cG} \deg_{\cH}E < \frac{e(\cH)}{2p|V|} \cdot \sum_{E \in \cG} p^{|E|} = \frac{e(\cH)}{2p|V|} \cdot w_p(\cH[C]) \le \frac{e(\cH)}{2}.
  \]
  On the other hand,
  \[
    e(\cH[C]) \ge e(\cH) - |V \setminus C| \cdot \Delta_1(\cH) \ge e(\cH) - |V \setminus C| \cdot \frac{Ke(\cH)}{|V|},
  \]
  which yields $|V \setminus C|/|V| > 1/(2K)$ or, equivalently, $|C| < \bigl(1-1/(2K)\bigr)|V|$.
\end{proof}

\begin{remark}
  A careful examination of the proof allows one to improve the bounds in assumption~\eqref{HCL:efficient:Delta:bounds} in \Cref{HCL:efficient} somewhat further.  First, note that we only needed the bounds 
  \[
    \Delta_1(\cH) \leq K \cdot \frac{e(\cH)}{|V|} \qquad \text{ and } \qquad \Delta_\ell(\cH) \leq \left( \frac{\tau}{32 r^2} \right)^{\ell -1} \, \frac{e(\cH)}{|V|} \quad \text{for $\ell \in \{2, \dotsc, r\}$}.
  \]
  
  Further, we claim that changing step~\ref{item:stopping-condition} in the algorithm from the proof of~\Cref{HCL:cover} to
  \begin{enumerate}
  \item[(2b')]
    If $w_p(\cH_i^{>1}) \le (\log r/r) \cdot p|C_i|$ or $|C_i| \le \bigl(1 - 1/(2K)\bigr)|V|$, then set $J \coloneqq i$ and \texttt{STOP}
  \end{enumerate}
  allows us to derive a stronger version of \Cref{HCL:efficient}, with the upper bound on $\Delta_{\ell}(\cH)$ in \eqref{HCL:efficient:Delta:bounds} increased by a factor of roughly $(r / \log r)^{\ell-1}$.
  To see this, notice that the only modifications needed in the proof of \Cref{HCL:cover} occur in \Cref{lem:largelink,lem:small-fingerprint-cover}.  The conclusion of \Cref{lem:largelink} still holds since
\[
    \frac{\log r}{r} \cdot p|C_i| \le w_p(\cH_i^{>1}) = \sum_{s=2}^r \sum_{v \in C_i} \frac{p \cdot w_p(\partial_v\cH_i^{s})}{s}
  \]
  and, consequently, there must be some $v \in C_i$ such that
  \[
    \sum_{s=2}^r \frac{w_p(\partial_v \cH_i^{s})}{s} \ge \frac{\log r}{r} \ge \sum_{s=2}^r \frac{1}{rs}
  \]
  which implies that $w_p(\partial_v \cH_i^{s})\geq 1/r$ for some $s$.
  On the other hand, the assertion of \Cref{lem:small-fingerprint-cover} may now be strengthened.  Indeed, since now
  \[
    w_p(\cH_J) = |V\setminus C_J| \cdot p + w_p(\cG) \le \frac{|V|p}{2K} + \frac{\log r}{r} \cdot |V| p \le \frac{2\log r}{r} \cdot |V|p,
  \]
  we may deduce the stronger bound  $|S| \leq 16(r \log r) \cdot p|V|$,
  which allows the conclusion of \Cref{HCL:efficient} to be reached under the weaker assumption
    \[
    \Delta_1(\cH) \leq K \cdot \frac{e(\cH)}{|V|} \qquad \text{ and } \qquad \Delta_\ell(\cH) \leq \left( \frac{\tau}{2^6 r\log r} \right)^{\ell -1} \frac{e(\cH)}{|V|} \quad \text{for $\ell \in \{2, \dotsc, r\}$}.
  \]
\end{remark}

\begin{proof}[Derivation of \Cref{HCL:efficient} from \Cref{HCL:hardcore}]
  We apply \Cref{HCL:hardcore} with $p \coloneqq \tau/(16Kr)$ and $\delta \coloneqq p/\tau = (16Kr)^{-1}$ to obtain a family $\cS$ and functions $f$ and $g$.
  Assertions~\ref{item:contained-efficient} and~\ref{item:small-fingerprint-efficient} follow immediately from the respective assertions in \Cref{HCL:hardcore}, so we only need to argue that~\ref{item:small-cover-efficient} holds as well.

  Suppose that $S \in \cS$ and let $C \coloneqq f(S)$.
  We may assume that $e(\cH[C]) \ge e(\cH)/2$, as otherwise $|C| \le \bigl(1-1/(2K)\bigr)|V|$ follows from the assumption that $\Delta_1(\cH) \le Ke(\cH)/|V|$, as in the previous derivation.
  Preparing to apply Janson's inequality (\Cref{thm:Janson}), we define $\mu \coloneqq \sum_{A \in \cH[C]} p^{|A|} = e(\cH[C])p^r$ and
  \[
    \Delta^* \coloneqq \sum_{A \in \cH[C]} p^{|A|} \sum_{\substack{B \in \cH[C] \\ A \cap B \neq \emptyset}} p^{|B \setminus A|} \le \mu \cdot \sum_{\ell = 1}^r \binom{r}{\ell}\Delta_\ell(\cH)p^{r-\ell}.
  \]
  By the assumption of the theorem,
  \[
    \sum_{\ell=1}^r \binom{r}{\ell}\Delta_\ell(\cH)p^{1-\ell} \le \sum_{\ell=1}^r r^\ell K\left(\frac{\tau}{32Kr^2p}\right)^{\ell-1} \cdot \frac{e(\cH)}{|V|} = \frac{Kre(\cH)}{|V|} \cdot \sum_{\ell=1}^r 2^{1-\ell},
  \]
  and consequently, by Janson's inequality,
  \[
    \Pr(C_p \in \cI(\cH)) \le \exp\left(-\frac{\mu^2}{2\Delta^*}\right) \le \exp\left(- \frac{\mu|V|}{4Kre(\cH)p^{r-1}}\right) \le \exp\left(-\frac{p|V|}{8Kr}\right),
  \]
  where the last inequality holds thanks to our assumption that $e(\cH[C]) \ge e(\cH)/2$.
  On the other hand, the assertion of \Cref{HCL:hardcore} gives
  \[
    \Pr\bigl(C_p \in \cI(\cH)\bigr) \ge \Pr\bigl(S \cup C_p \in \cI(\cH)\bigr) \ge (1-p)^{\delta|C|} > \exp\left(-2\delta p|V|\right) \ge \exp\left(-\frac{p|V|}{8Kr}\right),
  \]
  a contradiction.
\end{proof}

\begin{proof}[Derivation of \Cref{HCL:packaged} from \Cref{HCL:cover}]
  We apply \Cref{HCL:cover} to obtain a family $\cS$, functions $f$ and $g$, and a hypergraph $\cG_S$ for every $S \in \cS$.
  Assertions \ref{item:contained-packaged} and \ref{item:small-fingerprint-packaged} follow immediately from the respective assertions in \Cref{HCL:cover}, so we only need to argue that~\ref{item:non-supersaturated-packaged} holds as well.

  To this end, fix some $S \in \cS$, let $C \coloneqq f(S)$, and let $\cG \coloneqq \cG_S$ be the hypergraph covering $\cH[C]$ that satisfies $w_p(\cG) \le p|C|$ and whose all edges have size at least two.
  Assume toward contradiction that there is a hypergraph $\cH_S \subseteq \cH[C]$ that satisfies~\eqref{eq:p-supersaturated-def}.
  Since $\cG$ covers $\cH[C]$, and thus also $\cH_S$, we have
  \[
    e(\cH_S) \le \sum_{E \in \cG} \deg_{\cH_S}E < \sum_{E \in \cG} \frac{p^{|E|-1} e(\cH_S)}{|C|} = w_p(\cG) \cdot \frac{e(\cH_S)}{p|C|} \le e(\cH_S),
  \]
  a contradiction.
\end{proof}

\section{Interpolating between \Cref{HCL:cover,HCL:hardcore}}
\label{sec:interpolating-cover-hardcore}

In this section, we sketch the proof of \Cref{HCL:hardcore-cover}, which is a fairly straightforward adaptation of the proof of \Cref{HCL:hardcore} (given in \Cref{sec:proof-HCL-hardcore}), and present derivations of \Cref{HCL:cover,HCL:hardcore} from \Cref{HCL:hardcore-cover}.

\subsection{The algorithm}

Assume that $\cH$ is a hypergraph on a finite set $V$ and fix some $p \in (0,1]$.
The following algorithm takes as input an independent set $I \in \cI(\cH)$ and returns sets $S, C \subseteq V$ and a hypergraph $\cG$.

\begin{enumerate}[label=(\arabic*)]
\item
  Let $\cH_0 \coloneqq \cH$ and $S_0 \coloneqq \emptyset$.
\item
  Do the following for $i = 0, 1, \dotsc$:
  \begin{enumerate}[label=(\alph*),ref=(\arabic{enumi}\alph*)]
  \item
    \label{item:stopping-condition-hardcore-with-cover}
    If there exists a set $L \subseteq V \setminus S_i$ such that $L \in \cI(\cH_i)$ and
    \[
      \Pr\bigl(L \subseteq V_p \mid V_p \in \cI(\cH_i)\bigr) < (1-\delta)^{|L|}p^{|L|},
    \]
    let $L_i$ be a minimal such set;  otherwise, set $J \coloneqq i$ and \texttt{STOP}.
  \item
    If $L_i \subseteq I$, set $S_{i+1} \coloneqq S_i \cup L_i$ and $\cH_{i+1} \coloneqq \{E \setminus L_i:E\in \cH_i\} \eqqcolon \partial_{L_i}^*(\cH_i)$.
  \item
    Otherwise, if $L_i \nsubseteq I$, set $S_{i+1} \coloneqq S_i$ and $\cH_{i+1} \coloneqq \cH_i\cup \{L_i\}$.
  \end{enumerate}
\item  Return $S \coloneqq S_J$, $C \coloneqq \left\{v \in V : \{v\} \notin \cH_J\right\}$ and $\cG\coloneqq \cH_J[C]$.
\end{enumerate}

\subsection{Properties}

The above algorithm always terminates because no set $L$ is ever considered more than once.  Indeed, either $L_i \subseteq S_{i+1}$, and thus $L_i \nsubseteq V \setminus S_j$ for all $j > i$, or $L_i \in \cH_{i+1}$, and thus $L_i \notin \cI(\cH_j)$ for all $j > i$.  Moreover, the set $C$ and the hypergraph $\cG$ can be reconstructed with the knowledge of $S$ only, as running the algorithm with input $S$ instead of $I$ results in the same sets $S$ and $C$ and hypergraph $\cG$.  This allows us to define $g(I) \coloneqq S$ and $f(g(I)) \coloneqq C$.  It remains to show that, for every input $I \in \cI(\cH)$, we have $S \subseteq I \subseteq C$, the set S has at most $p|V|/\delta$ elements, and that $\cG$ is a cover of $\cH[C]$ with the desired property.

The assertion of \Cref{lem:remain-ind-hardcore}, that is, $S_i \subseteq I \in \cI(\cH_i)$ and $I' \in \cI(\cH_i) \iff S_i \cup I' \in \cI(\cH_i)$ for all $i$, remains true, with essentially the same proof.  Consequently, $I \in \cI(\cH_J)$ and thus $I \subseteq C$, as no vertex in $V \setminus C$ can belong to an independent set of $\cH_J$.  Observe additionally that, for all $i$, we have $\<\cH_i\> \subseteq \<\cH_{i+1}\>$ and thus $\cI(\cH_{i+1}) \subseteq \cI(\cH_i)$.  The desired upper bound on $|S|$ can be established by adapting the proof of \Cref{lem:small-fingerprint-hardcore}.  Indeed, note first that it is enough to show that the sequence $P_i \coloneqq \Pr\bigl(V_p \in \cI(\cH_i)\bigr)$ satisfies the inequality $P_{i+1} \le (1-\delta)^{\1_{L_i \subseteq I} \cdot |L_i|} \cdot P_i$ for all~$i$.  The inequality $P_{i+1} \le P_i$ follows as $\cI(\cH_{i+1}) \subseteq \cI(\cH_i)$.  Further, if $L_i \subseteq I$, then
\[
  p^{|L_i|} \cdot \Pr\bigl(V_p \in \cI(\cH_{i+1})\bigr) = \Pr\bigl(L_i \subseteq V_p \in \cI(\cH_i)\bigr) < (1-\delta)^{|L_i|} p^{|L_i|} \cdot \Pr\bigl(V_p \in \cI(\cH_i)\bigr),
\]
where the equality holds as $\cI(\cH_{i+1}) = \cI(\partial_{L_i}^* \cH_i) = \{I' \in \cI(\cH_i) : L_i \cup I' \in \cI(\cH_i)\}$.

Last but not least, we argue that $\cG$ possesses all the claimed properties.  First, $\cG = \cH_J[C]$ covers $\cH[C]$ since $\<\cH\> = \<\cH_0\> \subseteq \dotsb \subseteq \<\cH_J\>$.  Second, each $E \in \cG$ satisfies $|E| \ge 2$ as otherwise we would have $E \cap C = \emptyset$, by the definition of $C$.  We also remark that, for all $L \subseteq C \setminus S$ with $L \in \cI(\cG)$, we have
\[
  \Pr\bigl(L \subseteq C_p \mid C_p\in \cI(\cG)\bigr) = \Pr\bigl(L \subseteq V_p \mid V_p\in \cI(\cH_J)\bigr)\geq (1-\delta)^{|L|} p^{|L|},
\]
by the terminating condition in the algorithm and the fact that $\cI(\cH_J) \subseteq 2^C$.

Finally, we show that $E \cap S = \emptyset$ for every $E \in \cG$.  Since $S = S_J$ and $\cG \subseteq \cH_J$, it is enough to prove the stronger statement that $E \cap S_i = \emptyset$ for all $E \in \cH_i$ and all $i$.  We show this by induction on $i$.  The induction basis is vacuously true as $S_0 = \emptyset$.  Assume that $i \ge 0$.  If $L_i \nsubseteq I$, then $S_{i+1} = S_i$ and the only edge in $\cH_{i+1} \setminus \cH_i$ is $L_i$, which is disjoint from $S_i$.  Else, if $L_i \subseteq I$, then $S_{i+1} \setminus S_i = L_i$ and we replace each $E \in \cH_i$, which is disjoint from $S_i$, with the edge $E \setminus L_i = E \setminus S_{i+1}$, which is disjoint from $S_{i+1}$.

\subsection{Derivations of \Cref{HCL:cover,HCL:hardcore}}

We first show that Theorem~\ref{HCL:hardcore} follows by a simple application of Proposition~\ref{prop:key}.

\begin{proof}[Derivation of \Cref{HCL:hardcore} from \Cref{HCL:hardcore-cover}]
  Since the assumptions of the two theorems are identical, and their assertions differ only in the characterisation of the sets $f(S)$, we just need to show that assertion~\ref{item:description-containers-hardcore-cover} in~\Cref{HCL:hardcore-cover} implies that, for every $S \in \cS$, we have
  \[
    \Pr\bigl(S \cup C_p \in \cI(\cH)\bigr) \ge (1-p)^{\delta |C \setminus S|},
  \]
  where $C \coloneqq f(S)$.
  Let $C' \coloneqq C \setminus S$ and note that the fact that $\cG$ is a cover of $\cH[C]$ implies that
  \[
    \Pr\bigl(S \cup C_p \in \cI(\cH)\bigr) = \Pr\bigl(S \cup C_p' \in \cI(\cH)\bigr) \geq \Pr\bigl(S \cup C_p' \in \cI(\cG)\bigr).
  \]
  Further, the fact that $E \cap S = \emptyset$ for all $E \in \cG$ implies that, for every $v \in C'$
  \[
    \Pr\bigl(v \in C_p' \mid S \cup C_p' \in \cI(\cG)\bigr) = \Pr\bigl(v \in C_p' \mid C_p \in \cI(\cG)\bigr) \geq (1-\delta)p.
  \]
  It follows that
  \[
    \Ex\bigl[ |C_p'| \mid S \cup C_p' \in \cI(\cG)\bigr]\geq (1-\delta)p |C'|
  \]
  and consequently, by \Cref{prop:key}, that $\Pr\bigl(S \cup C_p'\in \cI(\cG)\bigr) \geq (1-p)^{\delta |C'|}$.
\end{proof}

The derivation of \Cref{HCL:cover} from \Cref{HCL:hardcore-cover} is decidedly more complicated.  We will show that that the cover $\cG$ of $\cH[C]$ given by \Cref{HCL:hardcore-cover} has the property  $w_p(\cG) \le p|C|$.  We will argue by contradiction, showing that, if $w_p(\cG) > p|C|$, some maximal set $L$ among those whose degree in $\cG$ is large satisfies $\Pr\bigl(L \subseteq C_p \mid C_p \in \cI(\cG)\bigr) < (1-\delta)^{|L|}p^{|L|}$; the proof of this inequality will employ a second-moment argument.

\begin{proof}[Derivation of \Cref{HCL:cover} from \Cref{HCL:hardcore-cover}]
  Suppose that $\cH$ is an $r$-uniform hypergraph with vertex set $V$ and let $p \in (0, 1/(8r^2)]$ be arbitrary.
  We apply \Cref{HCL:hardcore-cover} with $\delta \coloneqq 1/(4r)$ and density parameter $q \coloneqq 2rp \le 1/(4r) = \delta$ to obtain a family $\cS \subseteq 2^V$ of sets satisfying $|S| \le q|V|/\delta = 8r^2p|V|$ for each $S \in \cS$ and functions $g \colon \cI(\cH) \to \cS$ and $f \colon \cS \to 2^V$ such that $g(I) \subseteq I \subseteq f(g(I))$ for all $I \in \cI(\cH)$.
  It suffices to show that, for every $S \in \cS$, there is a hypergraph $\cG$ on $V$ with all edges of size at least two that covers $\cH[C]$ and satisfies $w_p(\cG) \le p|C|$, where $C \coloneqq f(S)$.
  To this end, fix some $S \in \cS$ and let $\cG$ be the set of all inclusion-minimal elements in the cover given by~\ref{item:description-containers-hardcore-cover} in \Cref{HCL:hardcore-cover}.  As $\cG$ covers $\cH[C]$ and satisfies $|E| \ge 2$ for all $E \in \cG$, we only need to show that $w_p(\cG) \le p|C|$.

  Assume by contradiction that $w_p(\cG) > p|C|$.  We claim that there exists a set $L \in \cI(\cG)$ and an $s \in \{2, \dotsc, r\}$ such that
  \begin{equation}
    \label{eq:hardcore-cover-heavy-L}
    w_p(\partial_L\cG^s) \ge 1/r.
  \end{equation}
  To this end, note that
  \[
    p|C| < w_p(\cG) = \sum_{s=2}^r w_p(\cG^s) = \sum_{s=2}^r \frac{1}{s} \sum_{v \in C} p \cdot w_p(\partial_v\cG^s) \le p \cdot \sum_{v \in C} \sum_{s=2}^r w_p(\partial_v\cG^s),
  \]
  which means that~\eqref{eq:hardcore-cover-heavy-L} must hold with $L = \{v\}$ for some $v \in C$ and $s \in \{2, \dotsc, r\}$; note that $\{v\} \in \cI(\cG)$ for every $v \in C$ since all edges of $\cG$ have at least two vertices.
  Let $L$ be an inclusion-maximal set that is independent in $\cG$ and satisfies~\eqref{eq:hardcore-cover-heavy-L} for some $s \in \{2, \dotsc, r\}$ and let $\ell \coloneqq |L| \in \br{s-1}$.  Such choice of $L$ guarantees that, on the one hand,
  \[
    |\partial_L\cG^s| = p^{\ell-s} \cdot w_p(\partial_L \cG^s) \ge p^{\ell-s} / r
  \]
  and, on the other hand, for every $t \in \br{s-\ell-1}$,
  \[
    \begin{split}
      \Delta_t(\partial_L\cG^s) & = \max\{|\partial_{L \cup T}\cG^s| : T \subseteq V \setminus L \wedge |T|=t\} \\
      & = \max\{p^{t+\ell-s} \cdot w_p(\partial_{L \cup T}\cG^s) : T \subseteq V \setminus L \wedge |T|=t\} < p^{t+\ell-s} / r,
    \end{split}
  \]
  where the last inequality follows as every set $L' \supseteq L$ that has fewer than $s$ elements and satisfies $\partial_{L'}\cG^s \neq \emptyset$ is independent in $\cG$ (otherwise, $L'$ would be a proper subset of some edge of $\cG^s$, contradicting the fact that $\cG$ is an antichain).

  Since $L \in \cI(\cG)$, assertion~\ref{item:description-containers-hardcore-cover} of \Cref{HCL:hardcore-cover} yields
  \[
    \Pr\bigl(L \subseteq C_q \mid C_q \in \cI(\cG)\bigr) \ge (1-\delta)^\ell q^\ell \ge (1-\delta\ell) q^\ell \ge 3q^\ell/4
  \]
  whereas, by Harris's inequality, writing $\tC$ for $C_q$ conditioned on the event $C_q \in \cI(\cG)$,
  \[
    \begin{split}
      \Pr\bigl(L \subseteq C_q \mid C_q\in \cI(\cG)\bigr) & = \Pr\bigl(L \subseteq C_q \wedge C_q \in \cI(\partial_L \cG) \mid C_q \in \cI(\cG) \bigr) \\
      & \le q^{\ell} \cdot \Pr\bigl(C_q\in \cI(\partial_L\cG) \mid C_q\in \cI(\cG)\bigr) = q^{\ell} \cdot \Pr\bigl(\tC \in \cI(\partial_L\cG)\bigr),
    \end{split}
  \]

  Let $X \coloneqq e(\partial_L\cG^s[\tC])$ denote the number of edges that $\tC$ induces in $\partial_L\cG^s$.  Using the Paley--Zygmund inequality, we may bound
  \[
    \frac{3}{4} \le \Pr\bigl(\tC \in \cI(\partial_L\cG)\bigr) \leq \Pr\bigl(\tC \in \cI(\partial_L\cG^s)\bigr) = \Pr(X = 0) \le 1 - \frac{\Ex[X]^2}{\Ex[X^2]},
  \]
  which means that $\Ex[X^2] \ge 4\Ex[X]^2$.

  Since each $E \in \partial_L\cG^s$ is independent in $\cG$ (as $L \neq \emptyset$ and $\cG$ is an antichain), assertion~\ref{item:description-containers-hardcore-cover} of \Cref{HCL:hardcore-cover} yields
  \[
    \Ex[X] = \sum_{E \in \partial_L\cG^s} \Pr(E \subseteq \tC) \ge \sum_{E \in \partial_L\cG^s} (1-\delta)^{|E|}q^{|E|} = (1-\delta)^{s-\ell} \cdot \underbrace{|\partial_L\cG^s| \cdot q^{s-\ell}}_{\mu}.
  \]
  Further,
  \[
    \mu = |\partial_L\cG^s| \cdot q^{s-\ell} = w_p(\partial_L\cG^s) \cdot \left(\frac{q}{p}\right)^{s-\ell} \ge \frac{1}{r} \cdot \left(\frac{q}{p}\right)^{s-\ell} \ge 2.
  \]
  On the other hand, by Harris's inequality,
  \[
    \begin{split}
      \Ex[X^2]
      & = \sum_{E, E' \in \partial_L\cG^s} \Pr(E \cup E' \subseteq \tC) \le \sum_{E, E' \in \partial_L\cG^s} q^{|E \cup E'|}
        = \sum_{E \in \partial_L\cG^s} q^{|E|} \sum_{E' \in \partial_L\cG^s} q^{|E' \setminus E|} \\
      & \le \mu \cdot \left(\mu + 1 + \sum_{t=1}^{s-\ell-1} \binom{s-\ell}{t} \Delta_t(\partial_L\cG^s) q^{s-\ell-t} \right).
    \end{split}
  \]
  Further, for every $t \in \br{s-\ell-1}$,
  \[
    \Delta_t(\partial_L\cG^s) \cdot q^{s-\ell-t} \le \frac{p^{t+\ell-s} \cdot q^{s-\ell-t}}{r} \le \left(\frac{p}{q}\right)^t \cdot \mu = \frac{\mu}{(2r)^t}
  \]
  and thus
  \[
    \Ex[X^2] \le \mu + \mu^2 \cdot \sum_{t=0}^{s-\ell-1} \binom{s-\ell}{t} \cdot \frac{1}{(8r)^t} \le \mu + \mu^2 \cdot \left(1 + \frac{1}{2r}\right)^{s-\ell} \le \mu + \frac{3}{2} \cdot \mu^2 \le 2 \mu^2.
  \]
  We conclude that
  \[
    \frac{\Ex[X^2]}{\Ex[X]^2} \le \frac{2}{(1-\delta)^{s-\ell}} \le \frac{2}{1-\delta r} \le \frac{8}{3},
  \]
  which is a contradiction.
\end{proof}

\section{Concluding remarks}
\label{sec:concluding-remarks}

Comparing the two notions of almost-independence that are used to describe the containers in \Cref{HCL:cover,HCL:hardcore} led us to proving \Cref{prop:prob-vs-covers}.  While part~\ref{item:prob-vs-covers-2} of this proposition is sufficient for our purposes, we hope that a much stronger statement is true.  We will say that a~hypergraph $\cH$ is \emph{$r$-bounded}, for some positive integer $r$, if $|E| \le r$ for all $E \in \cH$.

\begin{question}\label{q:efficient-cover}
  What is the smallest function $K \colon \mathbb{N} \to \mathbb{R}$ such that any $r$-bounded hypergraph $\cH$ on a finite vertex set $V$ admits a cover $\cG \subseteq 2^V \setminus \{\emptyset\}$ that satisfies
  \[
    \Pr(\cH[V_p]=\emptyset) \leq \exp\left(-w_{p/K(r)}(\cG)\right)?
  \]
\end{question}

\Cref{prop:prob-vs-covers}~\ref{item:prob-vs-covers-2} shows that $K(r) = O(r^2)$ when one assumes that $\cH$ is $r$-uniform rather than $r$-bounded.
However, a fairly straightforward adaptation of the proof of this proposition extends its validity to all $r$-bounded hypergraphs.
On the other hand, it is not hard to see that $K(r) = \Omega(\log r)$.
(Indeed, consider the hypergraph $\cH$ with vertex set $K_{2n}$ whose edges are all perfect matchings and $p = c\log n / n$ for some small constant $c$.)
Originally, we conjectured that $K$ does not have to depend on $r$, at the cost of multiplying the above upper bound on the probability by a factor polynomial in $r$.
Soon after the original version of this work was made public, Quentin Dubroff found a counterexample to this stronger conjecture.
(His counterexample is the hypergraph with vertex set $K_n$ whose edges are all $K_{1,d}$-factors in $K_n$, for some $d = n^{\Theta(1)}$, and $p$ slightly larger than $d/n$.)
We later realised that a universal upper bound on the probability that $\cH[V_p] = \emptyset$ that has the form $\exp(-w_{p/K(r)}(\cG) + f(r))$, for some $f(r) \ge 0$ that depends only on $r$, implies the corresponding stronger upper bound of $\exp(-w_{p/K(r)}(\cG))$.
To see this, consider the hypergraph $m\cH$ that comprises $m$ vertex-disjoint copies of $\cH$.
It is easy to verify that $\Pr(m\cH[mV_p] = \emptyset) = \Pr(\cH[V_p] = \emptyset)^m$ and that, for every $q$, the smallest $q$-weight of a cover of $m\cH$ is $m$ times the smallest $q$-weight of a cover for $\cH$.
This observation leads to many further counterexamples to our (very naive) conjecture.

We remark that showing that $K(r)=O(\log r)$ would imply the Kahn--Kalai Conjecture / Park--Pham Theorem~\cite[Theorem~1.1]{ParPha24}; it would also be consistent with the strengthening of the Park--Pham Theorem due to Bell~\cite{Bel23}.  Indeed, let $V$ be a finite set and let $\cF \subseteq 2^V$ be an arbitrary increasing property.  Recall that the \emph{expectation threshold} of $\cF$ is the number $q(\cF)$ defined as follows:
\[
  q(\cF) \coloneqq \max\{q : \exists\, \cG \subseteq 2^V \; \<\cG\> \supseteq \cF \wedge w_q(\cG) \leq 1/2\}.
\]
Let $\cH$ be the set of all minimal elements of $\cF$ and let $r \coloneqq \max\{|E| : E \in \cH\} \cup \{2\}$, so that $\cH$ is an $r$-bounded hypergraph on $V$ and $R \in \cF$ if and only if $\cH[R] \neq \emptyset$.
Let $q \coloneqq q(\cH)$, suppose that $p \ge 4K(r) \cdot q$, and let $\cG$ be the cover of $\cH$ from the statement of \Cref{q:efficient-cover}.  Observe that
\[
  w_{p/K(r)}(\cG) \ge p/(qK(r))\cdot w_q(\cG) \ge 4 \cdot w_q(\cG) \ge 2.
\]
Consequently, we may conclude that
\[
  \Pr(V_p \notin \cF) = \Pr(\cH[V_p] = \emptyset)  \le 2/e^2 \le 1/2.
\]

Finally, showing that $K(r) = o(r)$ would imply an improvement of the upper bound on the number of containers in \Cref{HCL:cover}, as the following theorem shows.

\begin{HCL}
  \label{HCL:conjecture}
  Let $\cH$ be an $r$-bounded hypergraph with a finite vertex set $V$.  For all reals $\delta$ and $p$ that satisfy $0< p \le \delta/(2K(r)^2)$, there exists a family $\cS \subseteq 2^V$ and functions
  \[
    g \colon \cI(\cH)\to \cS
    \qquad \text{and} \qquad
    f \colon \cS\to 2^{V}
  \]
  such that:
  \begin{enumerate}[label=(\alph*)]
  \item
    For each $I\in \cI(\cH)$, we have $g(I) \subseteq I \subseteq f(g(I))$.
  \item
    Each $S \in \cS$ has at most $2K(r)^2 p |V|/\delta$ elements.
  \item
    For every $S \in \cS$, letting $C \coloneqq f(S)$, there exists a hypergraph $\cG \subseteq 2^{C \setminus S} \setminus \{\emptyset\}$ with
    \[
      w_{p}(\cG) \le \delta p|C|
    \]
    that covers $\cH[C]$.
  \end{enumerate}
\end{HCL}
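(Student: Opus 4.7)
The plan is to combine \Cref{HCL:hardcore} with \Cref{conj:efficient-cover} directly, using the conjecture to replace the lossy application of part~\ref{item:prob-vs-covers-2} of \Cref{prop:prob-vs-covers} (which loses a factor of $r^2$) when converting a probabilistic statement into a weighted-cover statement. Let $C_0$ denote the absolute constant from \Cref{conj:efficient-cover}; we will choose $K$ to be a sufficiently large multiple of $C_0$.

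First, I would apply \Cref{HCL:hardcore} with parameter $\delta$ and density $p' \coloneqq C_0 p$; the hypothesis $p \leq \delta/K$ implies $p' \leq \delta < 1$, so the application is valid. This yields a family $\cS \subseteq 2^V$ and functions $g \colon \cI(\cH) \to \cS$ and $f \colon \cS \to 2^V$ satisfying assertions~\ref{item:contained-hardcore} and~\ref{item:small-fingerprint-hardcore} of \Cref{HCL:hardcore}; in particular, $|S| \leq p'|V|/\delta = C_0 p |V|/\delta$ for every $S \in \cS$, and, writing $C \coloneqq f(S)$,
\[
  \Pr\bigl(S \cup C_{p'} \in \cI(\cH)\bigr) \geq (1-p')^{\delta |C \setminus S|}.
\]

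Next, introduce the quotient hypergraph $\cH' \coloneqq \{E \setminus S : E \in \cH[C]\}$ on the vertex set $C \setminus S$. Since $S \subseteq I \in \cI(\cH)$ for some $I$ with $g(I) = S$, no edge of $\cH[C]$ is contained in $S$, hence $\emptyset \notin \cH'$; moreover, $\cH'$ is $r$-bounded because $\cH$ is. Because the presence of vertices of $S$ in the random sample is irrelevant for containment of $\cH$-edges, the event $\{S \cup C_{p'} \in \cI(\cH)\}$ coincides with the event $\{(C \setminus S)_{p'} \in \cI(\cH')\}$, giving
\[
  \Pr\bigl((C \setminus S)_{p'} \in \cI(\cH')\bigr) \geq (1-p')^{\delta|C\setminus S|}.
\]
Invoking \Cref{conj:efficient-cover} on $\cH'$ with density $p'$ produces a cover $\cG \subseteq 2^{C \setminus S} \setminus \{\emptyset\}$ of $\cH'$---and hence of $\cH[C]$, as each $E \in \cH[C]$ contains $E \setminus S \in \cH'$, which in turn contains some element of $\cG$---satisfying
\[
  \Pr\bigl((C \setminus S)_{p'} \in \cI(\cH')\bigr) \leq \exp\bigl(-w_{p'/C_0}(\cG) + C_0 \log r\bigr) = \exp\bigl(-w_p(\cG) + C_0 \log r\bigr),
\]
where the equality uses $p'/C_0 = p$. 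Combining the two bounds and estimating $\log(1/(1-p')) \leq 2p'$ (valid because $p' \leq 1/2$ for $K \geq 2 C_0$) yields $w_p(\cG) \leq C_0 \log r + 2 C_0 \delta p |V|$.

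The only delicate step is parameter matching. The conjecture is phrased in terms of $w_{p'/C_0}$, so we must inflate the \Cref{HCL:hardcore} density by exactly the factor $C_0$; the hypothesis $p \leq \delta/K$ guarantees room for this inflation. In parallel, the additive $C_0 \log r$ term intrinsic to \Cref{conj:efficient-cover} must be absorbed into the target bound $K \delta p |V|$, and the hypothesis $p > K \log r/(\delta|V|)$ is precisely what permits this: it gives $C_0 \log r < (C_0/K) \delta p |V|$, so $w_p(\cG) \leq (2C_0 + C_0/K) \delta p |V| \leq K \delta p |V|$ for $K$ a sufficiently large multiple of $C_0$, matching the required conclusion. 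The bound $|S| \leq C_0 p|V|/\delta \leq Kp|V|/\delta$ is likewise automatic for $K \geq C_0$.
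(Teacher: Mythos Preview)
Your proposal is correct and follows essentially the same route as the paper: apply \Cref{HCL:hardcore} with density inflated by the constant from \Cref{conj:efficient-cover}, then feed the resulting probability lower bound into the conjecture to extract a light cover, and finally absorb the additive $\log r$ term using the lower-bound hypothesis on $p$. Your treatment is in fact slightly more careful than the paper's in two places---you explicitly introduce the quotient hypergraph $\cH'$ on $C\setminus S$ to which the conjecture is applied, and you spell out how the hypothesis $p > K\log r/(\delta|V|)$ is used---but the argument is the same.
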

\begin{proof}
  Let $\cS$, $f$, and $g$ be the collection and the functions given by \Cref{HCL:hardcore} with $K(r) p$ in place of $p$ and $\delta/(2K(r))$ in place of $\delta$.
  Fix some $S \in \cS$, let $C \coloneqq f(S)$, and let $\cG \subseteq 2^{C \setminus S} \setminus \{\emptyset\}$ be a cover of $\cH[C]$ with smallest $p$-weight.
  The assertion of \Cref{HCL:hardcore} and the definition of $K(r)$ yield the following two inequalities:
  \[
    \bigl(1-K(r)p\bigr)^{\delta|C \setminus S|/2K(r)} \le \Pr\bigl(S \cup C_{K(r)p} \in \cI(\cH)\bigr) \le \exp\bigl(- w_p(\cG)\bigr).
  \]
  Taking logarithms of both sides yields the inequality
  \[
    w_p(\cG) \le -\frac{\delta|C \setminus S|}{2K(r)} \cdot \log\bigl(1-K(r)p\bigr) \le \delta p |C|,
  \]
  as required.  
\end{proof}

\section{Acknowledgements}
First and foremost, we are indebted to Quentin Dubroff for supplying a counterexample to a conjecture stated in a previous version of this article.  Further, we would like to thank Jinyoung Park for helpful discussions of this conjecture and the counterexample.  Last but not least, we would like to thank Rob Morris, Peter Keevash, and Huy Pham for helpful comments and suggestions.

\bibliographystyle{abbrv}
\bibliography{containers}

\begin{thebibliography}{10}

\bibitem{BalMorSam15}
J.~Balogh, R.~Morris, and W.~Samotij.
\newblock Independent sets in hypergraphs.
\newblock {\em J. Amer. Math. Soc.}, 28(3):669--709, 2015.

\bibitem{BalMorSam18}
J.~Balogh, R.~Morris, and W.~Samotij.
\newblock The method of hypergraph containers.
\newblock In {\em Proceedings of the {I}nternational {C}ongress of
  {M}athematicians---{R}io de {J}aneiro 2018. {V}ol. {IV}. {I}nvited lectures},
  pages 3059--3092. World Sci. Publ., Hackensack, NJ, 2018.

\bibitem{BalSam20}
J.~Balogh and W.~Samotij.
\newblock An efficient container lemma.
\newblock {\em Discrete Anal.}, pages Paper No. 17, 56, 2020.

\bibitem{Bel23}
T.~Bell.
\newblock The {P}ark-{P}ham theorem with optimal convergence rate.
\newblock {\em Electron. J. Combin.}, 30(2):Paper No. 2.25, 8, 2023.

\bibitem{BerDelTowTse19}
A.~Bernshteyn, M.~Delcourt, H.~Towsner, and A.~Tserunyan.
\newblock A short nonalgorithmic proof of the containers theorem for
  hypergraphs.
\newblock {\em Proc. Amer. Math. Soc.}, 147(4):1739--1749, 2019.

\bibitem{Bol65}
B.~Bollob\'{a}s.
\newblock On generalized graphs.
\newblock {\em Acta Math. Acad. Sci. Hungar.}, 16:447--452, 1965.

\bibitem{Cam20}
M.~Campos.
\newblock On the number of sets with a given doubling constant.
\newblock {\em Israel J. Math.}, 236(2):711--726, 2020.

\bibitem{CamColMorMorSou22}
M.~Campos, M.~Collares, R.~Morris, N.~Morrison, and V.~Souza.
\newblock The typical structure of sets with small sumset.
\newblock {\em Int. Math. Res. Not. IMRN}, 2022(14):11011--11055, 2022.

\bibitem{CamCouSerWot23}
M.~Campos, M.~Coulson, O.~Serra, and M.~W\"{o}tzel.
\newblock The typical approximate structure of sets with bounded sumset.
\newblock {\em SIAM J. Discrete Math.}, 37(3):1386--1418, 2023.

\bibitem{FraKahNarPar21}
K.~Frankston, J.~Kahn, B.~Narayanan, and J.~Park.
\newblock Thresholds versus fractional expectation-thresholds.
\newblock {\em Ann. of Math. (2)}, 194(2):475--495, 2021.

\bibitem{Jan90}
S.~Janson.
\newblock Poisson approximation for large deviations.
\newblock {\em Random Structures Algorithms}, 1(2):221--229, 1990.

\bibitem{KahKal07}
J.~Kahn and G.~Kalai.
\newblock Thresholds and expectation thresholds.
\newblock {\em Combin. Probab. Comput.}, 16(3):495--502, 2007.

\bibitem{KleWin82}
D.~J. Kleitman and K.~J. Winston.
\newblock On the number of graphs without {$4$}-cycles.
\newblock {\em Discrete Math.}, 41(2):167--172, 1982.

\bibitem{KozSam23}
G.~Kozma and W.~Samotij.
\newblock Lower tails via relative entropy.
\newblock {\em Ann. Probab.}, 51(2):665--698, 2023.

\bibitem{LiuMatSza}
D.~Liu, L.~Mattos, and T.~Szabó.
\newblock {On the number of sets with small sumset}.
\newblock arXiv:2407.04492.

\bibitem{Lub66}
D.~Lubell.
\newblock A short proof of {S}perner's lemma.
\newblock {\em J. Combinatorial Theory}, 1:299, 1966.

\bibitem{Mes63}
L.~D. Me\v{s}alkin.
\newblock A generalization of {S}perner's theorem on the number of subsets of a
  finite set.
\newblock {\em Teor. Verojatnost. i Primenen.}, 8:219--220, 1963.

\bibitem{MorSamSax24}
R.~Morris, W.~Samotij, and D.~Saxton.
\newblock An asymmetric container lemma and the structure of graphs with no
  induced {$4$}-cycle.
\newblock {\em J. Eur. Math. Soc. (JEMS)}, 26(5):1655--1711, 2024.

\bibitem{Nen}
R.~Nenadov.
\newblock Probabilistic hypergraph containers.
\newblock {\em Israel J. Math.}, 261(2):879--897, 2024.

\bibitem{ParPha24}
J.~Park and H.~T. Pham.
\newblock A proof of the {K}ahn-{K}alai conjecture.
\newblock {\em J. Amer. Math. Soc.}, 37(1):235--243, 2024.

\bibitem{Sam15}
W.~Samotij.
\newblock Counting independent sets in graphs.
\newblock {\em European J. Combin.}, 48:5--18, 2015.

\bibitem{Sap01}
A.~A. Sapozhenko.
\newblock On the number of independent sets in extenders.
\newblock {\em Diskret. Mat.}, 13(1):56--62, 2001.

\bibitem{SaxTho15}
D.~Saxton and A.~Thomason.
\newblock Hypergraph containers.
\newblock {\em Invent. Math.}, 201(3):925--992, 2015.

\bibitem{SaxTho16online}
D.~Saxton and A.~Thomason.
\newblock Online containers for hypergraphs, with applications to linear
  equations.
\newblock {\em J. Combin. Theory Ser. B}, 121:248--283, 2016.

\bibitem{SaxTho16simple}
D.~Saxton and A.~Thomason.
\newblock Simple containers for simple hypergraphs.
\newblock {\em Combin. Probab. Comput.}, 25(3):448--459, 2016.

\bibitem{Tal10}
M.~Talagrand.
\newblock Are many small sets explicitly small?
\newblock In {\em S{TOC}'10---{P}roceedings of the 2010 {ACM} {I}nternational
  {S}ymposium on {T}heory of {C}omputing}, pages 13--35. ACM, New York, 2010.

\bibitem{Yam54}
K.~Yamamoto.
\newblock Logarithmic order of free distributive lattice.
\newblock {\em J. Math. Soc. Japan}, 6:343--353, 1954.

\end{thebibliography}

\appendix

\section{Three proofs of \Cref{prop:key}}
\label{sec:three-proofs-key-prop}

One can see the assertion of the proposition is best-possible by picking an arbitrary subset $U \subseteq C$ and considering the family $\cI=2^{U}$.  Indeed, in this case $\Ex[|C_p| \mid C_p \in \cI]=\Ex[|U_p|]=p|U|$ and
\[
  \log \Pr(C_p\in \cI)=\log \Pr(C_p\cap (C\setminus U)=\emptyset)=(|C|-|U|)\log(1-p).
\]

We provide three proofs of the proposition.   The first proof uses the chain rule for relative entropy, the second proof employs a compression argument combined with the Kruskal--Katona theorem, and the third proof uses a generalisation of the edge-isoperimetric inequality for the hypercube due to Kahn and Kalai~\cite{KahKal07}.
\begin{proof}[First proof]
  Without loss of generality, we may assume that $C = \br{N}$.
  Let $Y = (Y_1, \dotsc, Y_N)$ be the indicator function of $C_p$ conditioned on $C_p \in \cI$ and note that
  \[
    \log \Pr(C_p\in \cI)= -\DKL(Y\, \| \,C_p),
  \]
  where $\DKL(\cdot \, \| \, \cdot)$ denotes the \emph{Kullback--Leibler divergence} between two probability distributions.
  For any $J \subseteq \br{N}$, write $I_p((Y_j)_{j \in J})$ in place of $\DKL((Y_j)_{j \in J} \,\|\, J_p)$.
  With this notational convention, $\DKL(Y \,\|\, C_p) = I_p(Y_1, \dotsc, Y_N)$.
  The first key property of $I_p$ that we will use is that it obeys the familiar chain rule (see, e.g., \cite[Section~4]{KozSam23}):
  \[
    I_p(Y_1, \dots, Y_N) = \sum_{i=1}^N I_p(Y_i \mid Y_1, \dotsc, Y_{i-1}) = \sum_{i=1}^N \Ex[i_p(\Ex[Y_i \mid Y_1, \dotsc, Y_{i-1}])],
  \]
  where we wrote $I_p(Y_i \mid Y_1, \dotsc, Y_{i-1})$ for the conditional relative entropy and
  \[
    i_p(q) \coloneqq \DKL(\Ber(q) \,\|\, \Ber(p)) = q\log\frac{q}{p} + (1-q)\log\frac{1-q}{1-p}.
  \]

%  Proof of the chain rule: Suppose that $Y_1 \in \{0,1\}^k$ and $Y_2 \in \{0,1\}^\ell$ and let $X_1$ and $X_2$ be independent vectors of $k$ and $\ell$ i.i.d.\ Bernoulli(p) random variables.
%  \[
%    \begin{split}
%      I_p(Y_1 \mid Y_2) & = \Ex[I_p(Y_1^{Y_2})] = \Ex[\DKL(Y_1^{Y_2} \,\|\,X_1)] = \sum_{y_2 \in \{0,1\}^\ell} \Pr(Y_2 = y_2) \cdot \DKL(Y_1^{y_2} \,\|\, X_1) \\
%      & = \sum_{y_2} \Pr(Y_2 = y_2) \sum_{y_1} \Pr(Y_1 = y_1 \mid Y_2 = y_2) \log \frac{\Pr(Y_1 = y_1 \mid Y_2 = y_2)}{\Pr(X_1 = y_1)} \\
%      & = \sum_{y_1, y_2} \Pr(Y_1 = y_1, Y_2 = y_2) \cdot \log \frac{\Pr(Y_1 = y_1, Y_2 = y_2) \Pr(X_2=y_2)}{\Pr(Y_2=y_2) \Pr(X_1 = y_1, X_2 = y_2)} \\
%      & = \DKL((Y_1,Y_2) \,\|\, (X_1,X_2)) - \DKL(Y_2 \,\|\, X_2) = I_p(Y_1, Y_2) - I_p(Y_2).
%  \end{split}
%  \]
  
  Since the function $i_p \colon [0,1] \to \RR$ is convex, we have, for every random variable $Z \in [0,p]$,
  \[
    \Ex[i_p(Z)] \le (1-\Ex[Z]/p) \cdot i_p(0) + (\Ex[Z]/p) \cdot i_p(p) = (\Ex[Z]/p-1) \log(1-p).
  \]
  Finally, since conditioned on $Y_1, \dotsc, Y_{i-1}$, the vector $(Y_i, \dotsc, Y_N)$ is the indicator function of $(\br{N} \setminus \{1, \dotsc, i-1\})_p$ conditioned on belonging to a decreasing family of sets, we have $\Ex[Y_i \mid Y_1, \dotsc, Y_{i-1}] \le p$ almost surely.  This implies that
  \[
    I_p(Y_1, \dotsc, Y_N) \le \sum_{i=1}^N (\Ex[Y_i]/p-1) \log(1-p) = (\Ex[|Y|]/p - N) \log(1-p),
  \]
  as claimed.
\end{proof}

\newcommand{\bx}{\mathbf{x}}

\begin{proof}[Second proof]
  We are going to prove the following equivalent inequality:
  \begin{equation}
    \label{eq:Ex-log-Pr-restated}
    \Ex[|C_p| \mid C_p \in \cI] \le \bigl(|C| - \log_{1-p} \Pr(C_p \in \cI)\bigr) \cdot p.
  \end{equation}
  Let $\ell = \ell(p) \coloneqq \log_{1-p} \Pr(C_p \in \cI)$.
  We first argue that it is enough to establish~\eqref{eq:Ex-log-Pr-restated} in the case where $\ell$ is an integer.
  To see this, suppose that $\ell \notin \Z$.
  We claim that the (continuous) function $q \mapsto \ell(q)$ cannot be constant on any open interval $I$ containing $p$ and thus it takes rational values for $q$ arbitrarily close to $p$.
  Suppose that this were not true and $\Pr(C_q \in \cI) = (1-q)^\ell$ for all $q \in I$.
  We would then have, for all $m$,
  \[
    \frac{d^m}{dq^m} \Pr(C_q \in \cI) = \prod_{i=0}^{m-1} (i-\ell) \cdot (1-q)^{\ell-m} \neq 0,
  \]
  contradicting the fact that $\Pr(C_q \in \cI) = \sum_{I \in \cI} q^{|I|}(1-q)^{|C|-|I|}$ is a polynomial in $q$.
  Since both sides of~\eqref{eq:Ex-log-Pr-restated} are continuous in $p$, it is enough to prove this inequality when $\ell$ is rational.
  
  Assume now that $\ell \in \Q$ and let $b$ be a positive integer such that $b\ell \in \Z$.
  Consider the family $\cI^b \subseteq 2^{C \times \br{b}}$ defined by
  \[
    \cI^b \coloneqq \bigl\{I_1 \times \{1\} \cup \dotsb \cup I_b \times \{b\} : I_1, \dotsc, I_b \in \cI\bigr\}.
  \]
  and observe that $\cI^b$ is decreasing and that $\Pr( (C \times \br{b})_p \in \cI^b) = \Pr(C_p \in \cI)^b = (1-p)^{b\ell}$.
  Invoking~\eqref{eq:Ex-log-Pr-restated} with $\cI$ replaced by $\cI^b$, we conclude that
  \[
    b \cdot \Ex[|C_p| \mid C_p \in \cI] = \Ex\bigl[|(C \times \br{b})_p| \in \cI^b \mid (C \times \br{b})_p \in \cI^b|\bigr]
    \le \bigl(b|C| - b\ell\bigr) \cdot p.
  \]

  Assume $\ell \in \Z$, let $\lambda \coloneqq p/(1-p)$, and let $Z \coloneqq \sum_{I \in \cI} \lambda^{|I|}$.
  Our first key observation is that
  \[
    Z = (1-p)^{-|C|} \cdot \Pr(C_p \in \cI) = (1-p)^{\ell-|C|} = (1+\lambda)^{|C|-\ell}.
  \]
  Write $n \coloneqq |C| - \ell$, so that $Z = (1+\lambda)^n$.
  For each $i \in \br{n}$, let $x_i$ be the unique number in $\{0\} \cup [i, \infty)$ such that $\binom{x_i}{i} = |\binom{C}{i} \cap \cI|$.
  The assumption that $\cI$ is decreasing implies that $2^I \subseteq \cI$ for all $I \in \cI$ and, consequently, $Z \ge (1+\lambda)^{|I|}$ for each $I \in \cI$.
  Therefore, $\max_{I \in \cI}|I| \le n$ and
  \[
    f(x_1, \dotsc, x_n) \coloneqq 1 + \sum_{i=1}^n \binom{x_i}{i} \cdot \lambda^i = \sum_{i=0}^{|C|} \left|\binom{C}{i} \cap \cI\right| \cdot \lambda^i = Z.
  \]
  Our second key observation is that
  \[
    g(x_1, \dotsc, x_n) \coloneqq \sum_{i=1}^n \binom{x_i}{i} \cdot i\lambda^i = Z \cdot \Ex[|C_p| \mid C_p \in \cI].
  \]
  Since $\cI$ is decreasing, the Kruskal--Katona theorem implies that $x_1 \ge \dotsb \ge x_n$.  In particular, letting
  \[
    \cX \coloneqq \{(x_1, \dotsc, x_n) \in \R^n : x_1 \ge \dotsb \ge x_n \text{ and } x_i \in \{0\} \cup (i-1, \infty) \text{ for all $i \in \br{n}$}\},
  \]
  we have
  \[
    \Ex[|C_p| \mid C_p \in \cI] \le Z^{-1} \cdot \max\{g(x_1, \dotsc, x_n) : (x_1, \dotsc, x_n) \in \cX \wedge f(x_1, \dotsc, x_n) = Z\}.
  \]
  Let $\bx = (x_1, \dotsc, x_n)$ be a vector that achieves the above maximum.
  We claim that $x_1 = \dotsb = x_n$.
  Indeed, suppose that $x_i > x_{i+1}$ for some $i$ and let $x'$ be the unique number satisfying $\max\{x_{i+1}, i\} < x' < x_i$ and
  \[
    d \coloneqq \binom{x_i}{i} - \binom{x'}{i} = \left[\binom{x'}{i+1} - \binom{x_{i+1}}{i+1}\right] \cdot \lambda.
  \]
  Let $\bx'$ be the vector obtained from $\bx$ by replacing the $i$th and the $(i+1)$th coordinates by $x'$
  and note that
  \[
    f(\bx') - f(\bx) = \left[\binom{x'}{i} - \binom{x_i}{i}\right] \cdot \lambda^i + \left[\binom{x'}{i+1} - \binom{x_{i+1}}{i+1}\right] \cdot \lambda^{i+1} = - d\lambda^i + d \lambda^i = 0
  \]
  and, similarly,
  \[
    g(\bx') - g(\bx) = (i+1)d\lambda^i - id\lambda^i = d\lambda^i > 0,
  \]
  contradicting the maximality of $\bx$.
  Let $x \in (n-1, \infty)$ be the number such that $x_1 = \dotsb = x_n = x$.
  Since
  \[
    (1+\lambda)^n = Z = 1 + \sum_{i=1}^n \binom{x}{i} \cdot \lambda^i = \sum_{i=1}^n \binom{x}{i} \cdot \lambda^i,
  \]
  we have $x = n$ and thus
  \[
    \Ex[|C_p| \mid C_p \in \cI] \le (1+\lambda)^{-n} \cdot \sum_{i=1}^n \binom{n}{i} \cdot i\lambda^i = n \cdot \frac{\lambda}{1+\lambda} = np, 
  \]
  as claimed.
\end{proof}

\begin{proof}[Third proof]
  Let $q \coloneqq 1-p$ and let $f \colon 2^C \to \{0,1\}$ be the function defined by $f(A) = 1$ if and only if $A^c \in \cI$.  Note that $f$ is increasing and that $\Ex[f(C_q)] = \Pr(C_p \in \cI)$.
  Recall that the \emph{total influence} of $f$ with respect to $C_q$ is the quantity
  \[
    \Inf_q(f)
    \coloneqq \sum_{x \in C} \Pr\bigl(f(C_q \cup \{x\}) \neq f(C_q \setminus \{x\})\bigr)
    = \sum_{x \in C} \bigl(\Pr(C_p \setminus \{x\} \in \cI) - \Pr(C_p \cup \{x\} \in \cI)\bigr).
  \]
  Observe now that, for every $x \in C$,
  \[
    \Pr(C_p \setminus \{x\} \in \cI) = \Pr(C_p \in \cI \mid x \notin C_p) = \frac{\Pr(C_p \in \cI)}{\Pr(x \notin C_p)} \cdot \Pr(x \notin C_p \mid C_p \in \cI)
  \]
  and, similarly,
  \[
    \Pr(C_p \cup \{x\} \in \cI) = \Pr(C_p \in \cI \mid x \in C_p) = \frac{\Pr(C_p \in \cI)}{\Pr(x \in C_p)} \cdot \Pr(x \in C_p \mid C_p \in \cI).
  \]
  Consequently, letting $p_x \coloneqq \Pr(x \in C_p \mid C_p \in \cI)$, we have
  \[
    \frac{\Inf_q(f)}{\Pr(C_p \in \cI)} = \sum_{x \in C} \left(\frac{1-p_x}{1-p} - \frac{p_x}{p}\right) = \frac{1}{1-p} \cdot \sum_{x \in C} \left(1 - \frac{p_x}{p}\right) = \frac{|C| - \Ex[|C_p| \mid C_p \in \cI] / p}{1-p}.
  \]
  The desired inequality follows from the edge-isoperimetric inequality for $f$, which states that
  \[
    q \cdot \Inf_q(f) \ge \Ex[f(C_q)] \cdot \log_q \Ex[f(C_q)],
  \]
  see~\cite[Section~4]{KahKal07}.
\end{proof}

\end{document}